\pgfplotsset{width=7cm, compat=1.10}
\theoremstyle{definition}
\newtheorem{theo}{Theorem}[subsection]
\newtheorem{pr}[theo]{Proposition}
 \newtheorem{lem}[theo]{Lemma}
 \newtheorem{coro}[theo]{Corollary}
\theoremstyle{remark}
\newtheorem{rema}[theo]{Remark}
\theoremstyle{definition}
\newtheorem{defi}[theo]{Definition}
\newtheorem{defnc}[theo]{Construction-Definition}
\numberwithin{equation}{subsection}
\newcommand\cu{\underline{C}}
\newcommand\du{\underline{D}}
\newcommand\au{\underline{A}}
\newcommand\bu{\underline{B}}
\newcommand\chow{\operatorname{Chow}}
\newcommand\smo{\operatorname{Smooth}}
\newcommand\q{{\mathbb{Q}}}
\newcommand\obj{\operatorname{Obj}}
\newcommand\hw{{\underline{Hw}}}
\newcommand\ood{\mathbbm{1}_{\mathfrak{D}}} 
\newcommand\md{\mathfrak{D}} %
\newcommand\mmd{\mathcal{M}_{\mathfrak{D}}}
\newcommand\ab{{Ab}}
\DeclareMathOperator\cha{\operatorname{char}}
\newcommand\zop{{\mathbb{Z}[\frac{1}{p}]}}
\newcommand\z{{\mathbb{Z}}}
\newcommand\wcho{w_{\chow}}
\newcommand\wchosw{w_{\smo}{}}
\newcommand\wchose{w_{\smo}^{eff}}
\newcommand\hrt{{\underline{Ht}}}
\newcommand\ns{\{0\}}
\newcommand\spe{\operatorname{Spec}}
\newcommand\p{\mathbb{P}}
\newcommand\id{\operatorname{id}}
 \newcommand\lan{\langle}
\newcommand\ra{\rangle}
\newcommand\sv{\operatorname{SmVar}}
\newcommand\spv{\operatorname{SmPrVar}}
\DeclareMathOperator\kar{\operatorname{Kar}}
\DeclareMathOperator\imm{\operatorname{Im}}
\DeclareMathOperator\co{\operatorname{Cone}}
\newcommand\mgl{\operatorname{MGl}}
\newcommand\modd{\operatorname{Mod}}
\newcommand\dmerb{DM^{eff}_{R}}
\newcommand\dmgep
\newcommand\dmgepr 
\newcommand\dmgepq 
\newcommand\dmr{DM_R}
\newcommand\sht{SH(k)}
\newcommand\afo{\mathbb{A}^1}
\begin{document}
\title{Smooth weight structures and birationality filtrations on motivic categories }
\author{M.V. Bondarko, D.Z. Kumallagov
   \thanks{ 
  Section 1, 2, and 5 were supported by the RFBR grant no. 19-31-90074. Sections 3--4 of the paper were supported by the Russian Science Foundation grant no. 20-41-04401.}} 
\maketitle
\begin{abstract} 
 We study various triangulated motivic categories and introduce a vast family of aisles (these are certain classes of objects) in them. These aisles are defined in terms of the corresponding "motives" (or motivic spectra) of smooth varieties in them; we  relate them to the corresponding homotopy $t-$structures.
   We describe our aisles in terms of 
  stalks at function fields and prove that they widely generalize the ones corresponding to slice filtrations. 
   Further, the filtrations on the 
   "homotopy hearts" $\hrt_{hom}^{eff}$ of the corresponding effective subcategories that are induced by these aisles can be described in terms of (Nisnevich) sheaf cohomology as well as in terms of the Voevodsky contractions $-_{-1}$.
  Respectively, we  express the condition for an object of $\hrt_{hom}^{eff}$ to be weakly birational (i.e., that its $n+1$th contraction is trivial or, equivalently, the Nisnevich cohomology vanishes in degrees $>n$ for some $n\ge 0$) in terms of these aisles; this statement generalizes 
well-known results of Kahn and Sujatha. 
Next, these classes define weight structures $w_{Smooth}^{s}$ (where $s=(s_{j})$ 
 are non-decreasing sequences  parameterizing our 
 aisles) that vastly generalize the Chow weight structures $w_{\chow}$ defined earlier.
Using 
 general abstract nonsense we also construct the corresponding {\it adjacent} $t-$structures $t_{Smooth}^{s}$ 
   and prove  that they give the birationality filtrations on $\hrt^{eff}_{hom}$. 

   Moreover, some of these weight structures induce weight structures on the corresponding $n-$birational motivic categories (these are the localizations by the levels of the slice filtrations). Our results 
    also 
     yield some new unramified cohomology calculations.  
\end{abstract}

\tableofcontents

\section{Introduction}\label{intro}

In 
  \cite{bokum} the authors constructed and studied in detail the {\it Chow weight structure} $\wcho$ on the category $\dmerb(k)$ of Voevodsky's motives (and on its ''stable'' version $\dmr(k)$), where $k$ is a perfect field and $R$ is the coefficient ring.
 The main advantage of this definition (in contrast to the earlier one in \cite{bzp}) was that it 
 did not depend  on 
 any resolution of singularities results, and the characteristic $p$ of $k$ was not required to be invertible in $R$ (if  $p>0$).

In the current text we consider similar (but much more  general) definitions 
 in various  motivic categories, and 
 relate the corresponding classes of objects (that are {\it aisles}) to the corresponding (well-known) homotopy $t-$structures $t_{hom}$. 
 The filtrations given by these aisles generalize the slice and (Chow) weight ones (the latter are considered in several papers of the first author). 
  We also consider the effective versions of our motivic categories and obtain several conditions for an object from the heart of the homotopy $t-$structure $\hrt^{eff}_{hom}$ to be {\it weakly birational} (see Proposition \ref{epowbo} and Remark \ref{eot}(1)). 
Further, 
 since our 
  weight structures $w_{Smooth}^{s}$ (generated by $\mathcal{M}(\sv)\lan j \ra[s_{j}]$, where $s=s_{j}$ is a non-decreasing sequence) are {\it smashing}, 
 there exist  $t$-structures   $t_{Smooth}^{s}$ 
   (right) {\it adjacent} to $w_{Smooth}^{s}$. This allows us to prove interesting properties of our filtration on $\underline{Ht}_{hom}^{eff}$: the levels  
 $F^{i}$ of this filtration consist of $i-$birational objects, and they give right adjoint functors to the 
  embeddings $\hrt^{i-bir} \hookrightarrow \hrt^{eff}_{hom}$. Thus we  generalize the corresponding results of \cite{kabir}.
We also obtain a curious statement about unramified cohomology (see Proposition \ref{spobc}).


Let us now describe the contents of the paper. Some more information of this sort can be found at the beginnings of sections. By $\mathfrak{D}^{eff} \subset \mathfrak{D}$ we denote the 
  motivic categories we consider. 

In \S\ref{sprel} we recall several definitions and results on triangulated categories, $t-$structures, and recall the %
motivic categories relevant to us (these are $SH^{S^{1}}(k)$, $SH^{eff}(k) \subset SH(k)$, $DM^{eff}(k) \subset DM(k)$, and $D_{\mathbb{A}^{1}}^{eff}(k)\subset D_{\mathbb{A}^{1}}(k)$)  
along with their basic properties. 
Further we discuss homotopy $t-$structures on them in a rather axiomatic manner. Some of the details are postponed until \S\ref{sother}. 

In \S\ref{coresul} we define 
 "smoothly generated" aisles corresponding to non-decreasing sequences $s=(s_j)$, in our motivic categories, 
and obtain several comparison 
theorems. Next, we describe these 
 aisles in  terms of   stalks corresponding to function fields. Further, we 
consider the corresponding 
  smooth filtrations on $\hrt^{eff}_{hom}$ and prove some of their properties. 
Finally, using 
  our results we prove that the homotopy $t-$structure  restricts to $\mathfrak{D}^{n-bir}$.

In \S\ref{weismot} we recall 
  basic definitions and properties of weight structures. 
 Next we define the main weight structures on our categories --- the {\it $\mathcal{A}_{s}$-smooth} ones,
and relate them to unramified cohomology. Using 
general existence results, we also define {\it $\mathcal{A}_{s}$-smooth} t-structures (that are right adjacent to our $w_{Smooth}^{eff, s}$), and 
 relate them to the birationality filtration on $\hrt^{eff}_{hom}$. 
  Next we relate our weight structures to the corresponding $n-$birational motivic categories. In particular, we 
 study the weight-exactness of the functors $-\lan n \ra$ and of the localizations $p_{n}: \mathfrak{D}^{eff} \to \mathfrak{D}^{n-bir}$. 

In \S\ref{sother} we discuss (in more detail) some definitions and results  for motivic categories different from $SH^{eff}(k) \subset SH(k)$.
 We explain that 
  localizing coefficients for these categories yields new examples.  This gives $R$-linear versions of the categories above, where $R\subset \q$; it follows that  the category  $SH(k)^{+}$ can also be added to the examples mentioned above.


\section{Preliminaries}\label{sprel}

In \S\ref{snotata} we give some definitions and conventions related to (mostly) triangulated categories.\\
In \S\ref{wat} we recall basic definitions and properties of t-structures.\\
In \S\ref{vgabc} we 
 recall some basics on various motivic categories.\\ 
In \S\ref{thomaxc} we introduce 
  and discuss homotopy $t$-structures on these categories.\\
In \S\ref{parcasosh} we 
 discuss our axioms mainly in the case of the categories $SH^{eff}(k) \subset SH(k)$.

\subsection{Categorical definitions and notation}\label{snotata}
\begin{itemize}
\item Given a category $\bu$ and $M, N \in \obj \bu$, we say that $M$ is a {\it retract} of $N$ if $id_{M}$ can be factored through $N$ (recall that if $\bu$ is triangulated
then $M$ is a retract of $N$ if and only if $M$ is its direct summand).

\item A subcategory $\du$ of $\bu$ is said to be {\it retraction-closed} in $\bu$ if it contains all $\bu$-retracts of its objects.

\item The full subcategory $\kar_{\bu}(\du)$ of $\bu$ whose objects are all $\bu$-retracts of objects of $\du$ will be called the {\it retraction-closure} of $\du$ in $\bu$. It is easily
seen that $\kar_{\bu}(\du)$ is retraction-closed in $\bu$; if $\bu$ and $\du$ are additive then $\kar_{\bu}(\du)$ is additive as well.

\item We say that an additive category $\du$ is {\it Karoubian} if any its idempotent endomorphism is isomorphic to the composition of a retraction and a coretraction of the type $M\bigoplus N \to M \to M\bigoplus N$.


\item For any $A,B,C \in \obj \cu$ we will say that C is an extension of B by A if there exists a distinguished triangle $A \to C \to B \to A[1]$. A class
$\mathcal{P}\subset \obj \cu$ is said to be extension-closed if it is closed with respect to extensions and contains $0$.


\item For $X,Y\in \obj \cu$ we will write $X\perp Y$ if $\cu(X,Y)=\ns$. For
$D,E\subset \obj \cu$ we write $D\perp E$ if $X\perp Y$ for all $X\in D,\
Y\in E$.
Given $D\subset\obj \cu$ we  will write $D^\perp$ for the class
$$\{Y\in \obj \cu:\ X\perp Y\ \forall X\in D\}.$$
Dually, ${}^\perp{}D$ is the class
$\{Y\in \obj \cu:\ Y\perp X\ \forall X\in D\}$.

\item Given $f\in\cu (X,Y)$ 
 we will call the third vertex
of (any) distinguished triangle $X\stackrel{f}{\to}Y\to Z$ a {\it cone} of
$f$.\footnote{Recall 
that different choices of cones are connected by non-unique isomorphisms.}\

\item All coproducts in this paper will be small.


\item Assume that $\cu$ is {\it smashing}, that is, closed with respect to 
 coproducts. For $\du\subset \cu$ ($\du$ is a triangulated
category that may be equal to $\cu$) one says that $\mathcal{P}\subset \obj \cu$ generates $\du$ as a {\it localizing subcategory} of $\cu$ if $\du$ is the smallest full strict triangulated
subcategory of $\cu$ that contains $\mathcal{P}$ and is closed with respect to $\cu$-coproducts.

\item $M\in \obj\cu$ is said to be {\it compact} if the functor $\cu(M, -): \cu \to \ab$ respects coproducts.

\item $\cu$ is said to be {\it compactly generated} if it is generated by a set of compact objects as its own localizing subcategory.
\end{itemize}

\subsection{Basics on $t$-structures }\label{wat}

Let us recall some notations and properties on $t-$structures. In contrast to the 
 original definitions in \cite{bbd}, our convention for $t-$structures will 
be homological.\footnote{Here we follow \cite{Mor}. 
 The homological and cohomological convention are 
 related  in the usual way: $\cu^{t\leq n}=\cu_{t\geq -n}$ and $\cu^{t\geq n}=\cu_{t\leq -n}$.} 

\begin{defi}\label{dtst}

 A pair of strict subcategories $\cu_{t\ge 0},\cu_{t< 0}\subset\obj \cu$ will be said to define a
$t$-structure $t$ on a triangulated category $\cu$ if they  satisfy the following conditions.

(i) $\cu_{t\ge 0}[1] \subset \cu_{t\ge 0}$ and $\cu_{t<0}[-1] \subset \cu_{t<0}$.

(ii) $\cu_{t\ge 0}\perp \cu_{t< 0}$.

(iii) For any $M\in\obj \cu$ there
exists a distinguished triangle
$$M_{t\ge0}\to M\to M_{t<0} {\to} M_{t\ge0}[1]$$
such that $M_{t\ge0}\in \cu_{t\ge 0},\   M_{t<0}\in \cu_{t< 0}$.
\end{defi}

We also need the following auxiliary definitions.

\begin{defi}\label{dotstr}
\begin{enumerate}

\item $\cu_{t\ge n}:= \cu_{t\ge 0}[n]$ (resp. $\cu_{t \le n+1}:= \cu_{t< n}:= \cu_{t<0}[n]$) for any integer $n \in \mathbb{Z}$.

\item The heart of $t$ is the category $\underline{Ht}=\cu_{t\ge 0}\cap \cu_{t\le 0}\subset \cu$; recall that it is 
 an abelian category.

\item We will say that $t$ is left (resp. right) {\it non-degenerate} if $\cap_{i \in \mathbb{Z}}\cu_{t\leq i}=\{0\}$ (resp.  $\cap_{i \in \mathbb{Z}}\cu_{t\ge i}=\{0\}$). We  say that $t$ is non-degenerate if it is both left and right non-degenerate.


\end{enumerate}
\end{defi}

\begin{rema}\label{ronts}

1. The triangle in 
axiom (iii) is essentially functorial in $M$. Thus, we get a well-defined functor $\tau_{\geq 0}: \cu \to \cu_{t\geq 0}$ (resp. $\tau_{<0}: \cu \to \cu_{t<0}$) which is right (resp. left) adjoint to the inclusion functor $\cu_{t\geq 0}\hookrightarrow \cu$ (resp. $\cu_{t< 0}\hookrightarrow \cu$. Also we put $\tau_{\geq n}(M):=\tau_{\geq 0}(M[-n])[n]$ (resp. $\tau_{\leq n}(M):=\tau_{<0}(M[n+1])[-n-1]$).

2. The  functor $H_{0}^{t}:=\tau_{\geq 0}\circ \tau_{\leq 0}$ sends $\cu$ into  $\underline{Ht}$; it is homological (i.e., converts distinguished triangles into long exact sequences). 
 Moreover, we will write $H_{n}^{t}$ for $H_{0}^{t}\circ [-n]$.

3. 
 One can easily check that $t$ is non-degenerate if and only if the family of functors $(H_{n}^{t})_{n\in \mathbb{Z}}$ is conservative.
 
 4. Consider two categories $C$ and $D$ endowed with $t-$structures. One says that a functor $F: C \to D$ is 
 {\it left} (resp. {\it right) $t-$exact}  if it respects the $t$-negativity (resp. $t$-positivity) of objects. We will say that $F$ is {\it $t-$exact}
if it is both left and right $t-$exact.
\end{rema}

\begin{pr}\label{cotsfc}
Let $\mathcal{P}\subset \obj\cu$ be a set of compact objects. Then there
exists a unique $t$-structure $t$ on $\cu$ such that $\cu_{t\geq 0}$ is the smallest subclass of
$\obj\cu$ that contains $\mathcal{P}$ and is stable with respect to extensions, the suspension $[1]$, and 
 coproducts.
\end{pr}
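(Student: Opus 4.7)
My plan is to construct the $t$-structure directly: set $\cu_{t\geq 0}$ to be the subclass of $\obj\cu$ described in the statement and define $\cu_{t<0}:=\cu_{t\geq 0}^{\perp}$. Uniqueness is automatic once existence is established, since in any $t$-structure axioms (ii) and (iii), together with the fact that $\cu_{t<0}$ is always closed under retracts (because $\tau_{<0}$ is a functor), force $\cu_{t<0}=\cu_{t\geq 0}^{\perp}$. Axiom (i) for $\cu_{t\geq 0}$ is built into its definition, while closure of $\cu_{t<0}$ under $[-1]$ is a formal consequence of closure of $\cu_{t\geq 0}$ under $[1]$; axiom (ii) holds by the definition of $\cu_{t<0}$. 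The whole content of the proposition is therefore producing the truncation triangle of axiom (iii).

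For this I would carry out a small-object-style sequential construction. Given $M \in \obj\cu$, inductively build approximations $\alpha_{i}: X_{i} \to M$ with cones $C_{i}$, starting from $X_{0}=0$. At the successor stage, let $I_{i}$ be the set of triples $(P, n, f)$ with $P \in \mathcal{P}$, $n \geq 0$, and $f \in \cu(P[n], C_{i})$; form the universal map $\mu_{i}: J_{i} := \coprod_{(P,n,f) \in I_{i}} P[n] \to C_{i}$; set $C_{i+1} := \co(\mu_{i})$; and apply the octahedron axiom to the composition $M \to C_{i} \to C_{i+1}$ to obtain $X_{i+1}$ fitting into distinguished triangles $X_{i} \to X_{i+1} \to J_{i}$ and $X_{i+1} \to M \to C_{i+1}$. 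The first triangle exhibits $X_{i+1}$ as an extension of $J_{i} \in \cu_{t\geq 0}$ (a coproduct of objects $P[n]$ with $n\geq 0$) by $X_{i} \in \cu_{t\geq 0}$, so $X_{i+1} \in \cu_{t\geq 0}$ by extension-closure.

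Passing to the homotopy colimit $X_{\infty} := \mathrm{hocolim}\, X_{i}$ — which remains in $\cu_{t\geq 0}$ because a sequential homotopy colimit sits in a distinguished triangle involving coproducts of the $X_{i}$ and their shifts, and $\cu_{t\geq 0}$ is closed under coproducts, $[1]$, and extensions — one obtains the desired triangle $X_{\infty} \to M \to C_{\infty}$ with $C_{\infty} := \mathrm{hocolim}\, C_{i}$. The compactness hypothesis is exactly what ensures $C_{\infty} \in \cu_{t<0}$: any morphism $g:P[n] \to C_{\infty}$ with $P \in \mathcal{P}$, $n \geq 0$, factors through some $C_{j}$ by compactness of $P[n]$, but by construction every element of $I_{j}$ is killed by the transition $C_{j} \to C_{j+1}$ (since the composition $J_{j} \to C_{j} \to C_{j+1}$ is zero), so $g$ vanishes in $C_{\infty}$. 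This compactness step is the main obstacle; all other verifications are formal consequences of the construction and the triangulated structure.
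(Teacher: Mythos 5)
Your argument is correct and is precisely the small-object construction that underlies Theorem A.1 of \cite{ajs}; the paper itself does not reprove the statement but simply cites that theorem, so there is no internal proof to compare against. One cosmetic point worth making explicit: after showing $\cu(P[n],C_\infty)=0$ for all $P\in\mathcal P$ and $n\ge 0$, you should observe that the class of objects $Y$ with $\cu(Y[m],C_\infty)=0$ for all $m\ge 0$ contains $\mathcal P$ and is closed under extensions, $[1]$, and coproducts, hence contains all of $\cu_{t\ge 0}$, which is what gives $C_\infty\in\cu_{t\ge 0}^\perp$.
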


\begin{proof}
This is precisely Theorem A.1 of \cite{ajs}.
\end{proof}

\begin{defi}\label{tgdasj}
We will call the $t-$structure as in the previous proposition the $t-$structure {\it generated by} $\mathcal{P}$.
\end{defi}

\begin{rema}
1. For a $t-$structure  given by Proposition \ref{cotsfc} the functors $\tau_{\geq 0}, \tau_{\leq 0}$, and $H_{0}^{t}$ 
 respect coproducts (see Proposition A.2 of ibid.).\\
2. The $t-$structure as in the proposition is left non-degenerate if and only if $\mathcal{P}$ 
 generates  $\cu$ as its own localizing subcategory (obvious; see Lemma 1.2.9 of \cite{bondegl}).
\end{rema}

\subsection{On various motivic categories}\label{vgabc}

Now we recall some basics on  motivic categories. Below $k$ will always be a perfect field of characteristic $p$, and $\mathfrak{D}(k)$ is one of the motivic categories listed below;  
$\mathcal{M}=\mathcal{M}_{\mathfrak{D}}:\sv \to \mathfrak{D}(k)$ will denote the corresponding "$\mathfrak{D}-$motive" functor from the category $\sv$ of smooth $k$-varieties.
We will always assume that $\mathfrak{D}$ is triangulated monoidal with the tensor unit given by $\ood=\mathcal{M}_{\mathfrak{D}}(\spe(k))$. Moreover, we assume that $\mathcal{M}_{\mathfrak{D}}$ sends products of varieties into tensor products.

 
$\mathcal{M}_{\mathfrak{D}}$  will satisfy the {\it homotopy invariance} property, that is, $\mathcal{M}_{\mathfrak{D}}( \afo)\cong \ood$. 

\begin{rema}\label{rtate}
1. Now
 let us introduce some notation related to "Tate-type twists". 
 
 Firstly, 
  we set $T= \co(\mathcal{M}_{\mathfrak{D}}(\mathbb{G}_m) \to \mathcal{M}_{\mathfrak{D}}(\mathbb{A}^{1}))$. Below we will always assume that $T$ is $\otimes$-invertible in $\md$ (yet cf. Remark \ref{cgmus}(4) below).
For $C\in \obj {\mathfrak{D}}$ and $n\in \z$ we set $C\lan n \ra=C\otimes T^{\otimes n}$ and  $C\{n\}=C\lan n \ra[-n]$. 

2. These twists are clearly ''coherent  with respect to functors that commute with $\mathcal{M}$'' in various motivic categories below.

Moreover, homotopy invariance implies $\mathcal{M}_{\mathfrak{D}}(\mathbb{G}_m)\cong  \ood\bigoplus T[-1]$. Furthermore, it is well-known (and follows from the so-called Mayer-Viertoris property)  for the categories we consider that  $\mathcal{M}_{\mathfrak{D}}(\mathbb{P}^{1}) \cong \ood \oplus T$.

 These splittings also exist in the motivic categories of the type $\mathfrak{D}^{eff}$ that we discuss in Remark \ref{cgmus}(2-3) below.
\end{rema}

\begin{itemize}

\item\label{proshp} We will write $SH(k)$ for the $\mathbb{P}^1-$stable motivic homotopy category, and $\mathcal{M}_{SH}: \sv \to SH(k)$ for the corresponding infinite suspension spectrum functor (see \cite{Mor}, \S5.1).

\item\label{prodm} We will write $DM(k)$ for the category of Voevodsky's motives, and $\mathcal{M}_{DM}: \sv \to DM(k)$ for the corresponding ("usual") $DM$-motive functor (see \S5.1 of \cite{degmod} for the detail). 

\item\label{proda} We will write $D_{\mathbb{A}^{1}}(k)$ for the $\mathbb{A}^{1}-$derived category as defined in \S5.3.20 of \cite{cidtm}, and $\mathcal{M}_{D_{\mathbb{A}^{1}}}: \sv \to D_{\mathbb{A}^{1}}(k)$ for the corresponding functor (see also \S6.2 of \cite{Mor}).

\item\label{promod} We will write $\mgl-\modd(k)$ for the category of $\mgl$-modules in $\sht$; see Propositions 7.2.14, 7.2.18 of \cite{cidtm}, Example 1.3.1(3) of \cite{bondegl}, or \S2.2 of \cite{degormod}. 

\end{itemize}

\begin{rema}\label{cgmus}
1.  All the categories discussed above are well known to be smashing 
 and  compactly generated by the objects 
 $\mathcal{M}_{\mathfrak{D}}(X)\{i\}$  for $X\in \sv, i \in \mathbb{Z}$.
 
 Furthermore, the functors $\mathcal{M}_{\mathfrak{D}}$ factor through the corresponding subcategories of compact objects.

 2. For each of these  $\mathfrak{D}$ we will write $\mathfrak{D}^{eff}(k)$ for the localising subcategory of  $\mathfrak{D}(k)$ generated by objects of the
form $\mathcal{M}(X)$ for $X \in \sv$. Thus $\mathfrak{D}^{eff}(k)$ is also compactly generated by these objects.

3. We will write $SH^{S^{1}}(k)$ for the ${S}^1-$stable motivic homotopy category, and $\mathcal{M}_{SH^{S^{1}}}: \sv \to SH^{S^{1}}(k)$ for the corresponding infinite suspension spectrum functor (see \cite{Mor}, \S4.1).
The category $SH^{S^{1}}(k)$ is compactly generated by the objects $\mathcal{M}(X)$ for $X\in \sv$. Note that there exists an adjunction $\sigma: SH^{S^{1}}(k) \rightleftarrows SH(k): \omega$ (see Remark 5.1.11 of \cite{Mor}). 

  The  category $SH^{S^{1}}(k)$  is not equivalent to  the effective category $\mathfrak{D}^{eff}$ for any "motivic" $\mathfrak{D}$.  
Nonetheless, by  abuse of notation we 
 will take $SH^{S^{1}}(k)$ as 
  one of the possibilities for the category $\mathfrak{D}^{eff}(k)$  
   in all the 
    formulations in this paper except the ones in \S\ref{etfiloth}.   
   
  4. In   categories of the type   $\mathfrak{D}^{eff}(k)$ 
   the twists of the types $-\lan n \ra$ and $-\{n\}$ are defined for $n\ge 0$ only. Note also that these functors are not fully faithful on  $SH^{S^{1}}(k)$ (in contrast to other $\mathfrak{D}^{eff}(k)$).

\end{rema}

\subsection{Homotopy $t$-structures: recollection}\label{thomaxc}

Now we introduce some definitions, which will be very important further in this paper.
We will write $\mathfrak{D}$ for  some of the motivic categories from section \ref{vgabc} and $\mathfrak{D}^{eff}$ for its effective version. 

\begin{defi}\label{thmdf}

1. Denote by $t_{hom}^{\mathfrak{D}}$ the $t-$structure on $\mathfrak{D}$ generated by $\mathcal{M}(X)\{i\}$ for $X \in \sv, i \in \mathbb{Z}$ (see Definition \ref{tgdasj}).

2. Denote by $t_{hom}^{\mathfrak{D}^{eff}}$ the $t-$structure on $\mathfrak{D}^{eff}$ generated by $\mathcal{M}(X)$ for $X \in \sv$.

3. For $i\in \z$,  $C \in \mathfrak{D}$ and $j\in \z$  (resp. $C \in \mathfrak{D}^{eff}$ and $j\ge 0$) we define the 
  functor $C^{i}_{j}(-): \sv^{op} \to \ab$ as $X \mapsto \mathfrak{D}(\mathcal{M}(X)\{j\}, C[i])$ 
(resp. $X \mapsto \mathfrak{D}^{eff}(\mathcal{M}(X)\{j\}, C[i])$).

More generally, for any cohomological functor from $\mathfrak{D}(k)$ to $\underline{Ab}$ and $X \in \sv$, $i, j \in \mathbb{Z}$ we define $H_{j}^{i}(\mathcal{M}(X))$ as $H(\mathcal{M}(X)\{j\}[-i])$.

4. 
 For $i,j$ as above we will write $\mathfrak{D}^{eff}(\mathcal{M}(K)\{j\}[i], -)$ (resp. $\mathfrak{D}(\mathcal{M}(K)\{j\}[i], -)$) for the following functor from $\mathfrak{D}^{eff}$: $ C \mapsto \varinjlim \limits_{X, k(X)=K}C^{-i}_{j}(X)$ (resp. for the functor
on $\mathfrak{D}$: 
$C \mapsto\varinjlim \limits_{X, k(X)=K}C^{-i}_{j}(X)$).

5. Denote by $-_{-1}$ the right adjoint to $-\{1\}$; 
this is the so-called Voevodsky contraction.\footnote{Essentially following Definition 4.3.10 of \cite{Mor}; 
note that the latter is equivalent to the original definition from \cite{vo1}. 
 This adjoint exists since 
 $\mathfrak{D}^{eff}$ is compactly generated (see Remark \ref{cgmus}(2,3)), and  $-\{1\}$ is exact and respects coproducts.} 
For $i>0$ the $i$-th iteration of $-_{-1}$ will be denoted by  $-_{-i}$. 
\end{defi}

We introduce some ''axioms'' characterizing homotopy $t-$structures on  motivic categories that we consider in this paper.\\

\textbf{(A1)}  Let $H$ be a cohomological functor from $\mathfrak{D}(k)$ to $\underline{Ab}$ and $X\in \sv$. Then 
 there exists a convergent  (coniveau) spectral sequence as follows: $$E_{1}^{p,q}=\underset{x\in X^{(p)}}{\coprod}H^{q}_{p}(x) \Rightarrow H^{p+q}_{0}(\mathcal{M}(X)),$$ 
where $X^{p}$ is the set of points of $X$ of codimension $p\ge 0$, and for a presentation of $x \in X^{p}$ as $\varprojlim_{i}X_{i}$ for $X_{i} \in \sv$ we define $H^{q}_{p}(x)$ as 
$\varinjlim_{i} H(\mathcal{M}(X_{i})\{p\}[-q])$.\\
 
\textbf{(A2)} $\mathfrak{D}(k)_{t_{hom} \geq 0}= \{C \in \mathfrak{D}(k) | \mathfrak{D}(\mathcal{M}(K)\{j\}, C[i])=\ns$ for all function fields $K/k$, $j \in \mathbb{Z}$, $i>0\}$.\\

In the effective setting one should take $j \ge 0$ in Axiom (A2) instead.

\begin{coro}\label{conspecseq}
\begin{enumerate}
\item\label{axcor0} The endofunctor $(-)_{-1}$ is $t_{hom}-$exact.

\item\label{axcor1}The functor $\Phi: \underline{Ht}^{\mathfrak{D}} \to Psh(Pts, Ab^\mathbb{Z}), \Phi(F)= F_{*}^{0}(-)$ (see the related definitions above) is conservative, exact, and commutes with  coproducts. 
 Here we write $Pts$ for the set of function fields (that is, finitely generated extensions) over the base field $k$.
 
 Moreover, the functor $\Phi_0:F\mapsto F_{0}^{0}(-)$ is conservative on $\underline{Ht}^{\mathfrak{D}^{eff}}$.

\item\label{axcor2} $t_{hom}^{\mathfrak{D}}$ and $t_{hom}^{\mathfrak{D}^{eff}}$ are non-degenerate (see Definition \ref{dotstr}(3)).
\end{enumerate}
\end{coro}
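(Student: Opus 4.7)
The plan is to base everything on axioms (A1) and (A2), together with the generation property from Proposition \ref{cotsfc} and the adjunction $-\{1\} \dashv -_{-1}$; the central technical device will be the coniveau spectral sequence (A1), which is used repeatedly to transfer vanishing statements at function fields (controlled by (A2) and by $\Phi$) to vanishing statements at smooth varieties. For (1), the adjunction gives $\mathfrak{D}(\mathcal{M}(K)\{j\}, C_{-1}[i]) = \mathfrak{D}(\mathcal{M}(K)\{j+1\}, C[i])$, so if $C \in \mathfrak{D}_{t\ge 0}$ then axiom (A2) applied with the shifted twist $j+1$ (which lies in the allowed range both in the stable and in the effective case) immediately yields $C_{-1} \in \mathfrak{D}_{t\ge 0}$; this is right $t$-exactness. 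Left $t$-exactness is equivalent, via $\mathfrak{D}_{t<0} = \mathfrak{D}_{t\ge 0}^\perp$, to right $t$-exactness of the left adjoint $-\{1\}$, for which it suffices to check that $-\{1\}$ sends the defining generators into $\mathfrak{D}_{t\ge 0}$. In the stable case this is tautological ($\mathcal{M}(X)\{i\} \mapsto \mathcal{M}(X)\{i+1\}$); in the effective case I invoke the splitting $\mathcal{M}(\mathbb{G}_m) \cong \ood \oplus \ood\{1\}$ from Remark \ref{rtate}(2) to identify $\mathcal{M}(X)\{j\}$ (for $j \ge 0$) as a retract of $\mathcal{M}(X \times \mathbb{G}_m^j) \in \mathfrak{D}^{eff}_{t\ge 0}$.

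For (2), I first observe that $F \in \underline{Ht}^{\mathfrak{D}}$ implies $F^n_j(K) = 0$ for all $n \ne 0$: the case $n > 0$ is precisely (A2), and the case $n < 0$ follows from the filtered-colimit description of $\mathfrak{D}(\mathcal{M}(K)\{j\}, -)$ combined with the orthogonality $\mathcal{M}(X_i)\{j\} \in \mathfrak{D}_{t\ge 0} \perp F[n] \in \mathfrak{D}_{t<0}$. Exactness of $\Phi$ is then immediate, since the long exact sequence obtained by applying $\mathfrak{D}(\mathcal{M}(K)\{j\}, -)$ to any distinguished triangle from the heart collapses to a short exact sequence. Commutation with coproducts reduces to two points: coproducts in $\underline{Ht}^{\mathfrak{D}}$ coincide with $\mathfrak{D}$-coproducts, because $H_0^t$ commutes with coproducts (remark after Proposition \ref{cotsfc}); and $\mathfrak{D}(\mathcal{M}(K)\{j\}, -)$ is by definition a filtered colimit of $\mathrm{Hom}$-functors out of compact objects, hence commutes with coproducts. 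Conservativity is the crucial point: for each fixed $j$ I apply axiom (A1) to the cohomological functor $H = \mathfrak{D}(-, F\{-j\})$; tensor-invertibility identifies $H^q_p(x) = F^q_{p+j}(x)$, and all $E_1$-terms vanish ($q \ne 0$ by the heart-vanishing just noted, $q = 0$ by the hypothesis $\Phi(F) = 0$). Convergence then forces $\mathfrak{D}(\mathcal{M}(X)\{j\}, F[*]) = 0$ for every $X$ and $j$, and since $\{\mathcal{M}(X)\{j\}\}$ generates $\mathfrak{D}$ as a localizing subcategory (Remark \ref{cgmus}(1)), we conclude $F = 0$. The effective statement for $\Phi_0$ proceeds identically at $j = 0$, using that $\{\mathcal{M}(X)\}_{X \in \sv}$ already generates $\mathfrak{D}^{eff}$ (Remark \ref{cgmus}(2)).

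Finally, for (3), left non-degeneracy is exactly the second remark following Proposition \ref{cotsfc}, since the generating set $\{\mathcal{M}(X)\{i\}\}$ (resp.\ $\{\mathcal{M}(X)\}$) generates $\mathfrak{D}$ (resp.\ $\mathfrak{D}^{eff}$) as a localizing subcategory. For right non-degeneracy, an object $C \in \bigcap_n \mathfrak{D}_{t\ge n}$ satisfies $C[-n] \in \mathfrak{D}_{t\ge 0}$ for every $n$, so (A2) applied to each such shift yields $\mathfrak{D}(\mathcal{M}(K)\{j\}, C[m]) = 0$ for all $K$, all admissible $j$, and all $m \in \mathbb{Z}$; re-running the coniveau spectral sequence argument from (2) then propagates this vanishing to every compact generator, forcing $C = 0$. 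The main obstacle throughout is precisely this propagation from function-field generators to smooth-variety generators, which is exactly what axiom (A1) is designed to handle; the only real subtlety in the effective setting is the book-keeping needed to ensure that every twist index arising in the argument remains in the allowed range $j \ge 0$.
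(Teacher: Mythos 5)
Most of your argument is sound. Parts (1), (3), and the exactness/coproduct portions of (2) match the paper in spirit, and for the conservativity of $\Phi$ on the stable heart your route through (A1) plus compact generation is a valid alternative to the paper's more direct reading of (A2) (the paper merely observes that $F\in\mathfrak{D}_{t_{hom}\ge 0}\setminus\mathfrak{D}_{t_{hom}\ge 1}$ forces a nonzero degree-$0$ stalk over some function field). The genuine gap is in the ``moreover'' clause. You claim the conservativity of $\Phi_0$ on $\underline{Ht}^{\mathfrak{D}^{eff}}$ ``proceeds identically at $j=0$'', but it does not: at twist $j=0$ the $E_1^{p,0}$-terms of the (A1) coniveau spectral sequence are coproducts of the higher-twist stalks $F^0_p(x)$ over codimension-$p$ points $x$ with $p>0$, whereas the hypothesis $\Phi_0(F)=0$ only controls the twist-$0$ stalks $F^0_0(-)$. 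So the $q=0$ row of the $E_1$-page is \emph{not} known to vanish for $p>0$, and the abutment cannot be forced to zero. Your stable argument works precisely because there the hypothesis $\Phi(F)=0$ already gives $F^0_m(K)=0$ for \emph{all} $m\in\z$; the effective $\Phi_0$ hypothesis is strictly weaker, and conservativity of $\Phi_0$ is a strictly stronger claim than what the naive $j=0$ run of the coniveau argument yields.

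The missing step is a bootstrap from twist $0$ to all twists $\ge 0$, and this is where the paper does more work. It feeds the varieties $(\mathbb{G}_m)^{\times i}(X_j)$, for a presentation $\spe(K)=\varprojlim_j X_j$, into (A1): on the anti-diagonal $p+q=0$ the only nontrivial $E_1$-term is at $(0,0)$, and it vanishes by the hypothesis $\Phi_0(F)=0$ (while $q>0$ terms vanish by (A2), and the $q<0$ terms with $p=-q$ vanish by the orthogonality axiom for $t$-structures combined with the right $t$-exactness of $-\{p\}$); hence $F^0_0(\mathcal{M}((\mathbb{G}_{m,K})^{\times i}))=0$. The splitting $\mathcal{M}(\mathbb{G}_m^{\times i})\cong\bigoplus_{m=0}^{i}\ood\{m\}^{\oplus\binom{i}{m}}$ (from Remark \ref{rtate}(2) and the product-to-tensor compatibility of $\mathcal{M}$) then exhibits $F^0_m(K)$ as a retract of this vanishing group for every $0\le m\le i$, so all effective-range stalks of $F$ vanish, and $F=0$ follows (e.g.\ via (A2) applied to $F[-1]$). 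This retract argument is exactly the step your proposal skips.
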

\begin{proof}
1. The right $t_{hom}-$exactness follows from \textbf{(A2)}. Next we note that $-\{1\}$ is right $t_{hom}-$exact, thus $(-)_{-1}$ is also left $t_{hom}-$exact (as a right adjoint).

2. The description of  $\mathfrak{D}(k)_{t_{hom} \geq 0}$ provided by \textbf{(A2)} immediately yields that $\Phi(F)\neq 0$ if $F\in  \mathfrak{D}(k)_{t_{hom} \geq 0}\setminus \mathfrak{D}(k)_{t_{hom} \geq 1}$.
Next, $\Phi$ commutes with coproducts, since it is defined in terms of functors corepresented by compact objects.

 To prove  exactness, we apply the functor $\Phi$ to the exact sequence $0 \to F' \to F \to F'' \to 0$ in $\underline{Ht}^{\mathfrak{D}}$.  Our definitions immediately imply  $\Phi(F''[-1])=0$, whereas 
  $\Phi(F'[1])=0$ by \textbf{(A2)}. 
  
We should also prove that if $F\in\obj \underline{Ht}^{\mathfrak{D}^{eff}}$ and $\Phi_0(F)=0$ then $F_{i}^{0}(-)=\ns$ for $i>0$. 
 Firstly, 
   $F_{*}^{*}(\mathcal{M}((\mathbb{G}_{m,K})^{\times i}))$ equals $\varinjlim_{j} F_{*}^{*}(\mathcal{M}((\mathbb{G}_{m})^{\times i}(X_{j})))$ for a presentation
$Spec(K)=\varprojlim_{j}X_{j}$. 
  We should  prove that $F^{p+q}_{0}(\mathcal{M}((\mathbb{G}_{m,K})^{\times i}))=\ns$ for $p+q=0$. We take the spectral sequence as in \textbf{(A1)} for the variety $(\mathbb{G}_{m})^{\times i}(X_{j})$ for each $j$.
  By our assumption and \textbf{(A2)}, 
   the corresponding $E_{1}^{p,q}$ 
    vanish if  $q>0$ and $p=q=0$. Further, for $q<0$ (i.e., in
the case $p=-q$) these groups 
 vanish by the property (ii) for $t-$structures and the right $t-$exactness of $-\{p\}$.
Thus  $ F_0^0(\mathcal{M}((\mathbb{G}_{m})^{\times i}(X_{j})))=0$ for any $j$, 
  and passing to the direct limit we obtain our assertion.
 
3. Can be easily obtained by combining
  a spectral sequence argument (Axiom \textbf{(A1)}) with the corresponding compact generation properties; see Remarks \ref{ronts}(3) and \ref{cgmus}(1--2).
\end{proof}

It will be shown below that axioms (\textbf{A1}--\textbf{A2}) hold for 
 all aforementioned motivic categories (and for some other ones).

\subsection{The case of $SH(k)$ and $SH^{eff}(k)$}\label{parcasosh}

In this section, we check our axioms and discuss the related definitions for the categories $SH^{eff}(k) \subset SH(k)$ that appear to 
 give the most interesting examples. 

\begin{defi}
We will write $\underline{\pi}_{i}(E)_{j}$ for the Nisnevich  sheaf on $\sv$ that is associated to the presheaf 
  $E^{i}_{j}$.
\end{defi}

\begin{lem}\label{mlfjooa}
1. There exist convergent spectral sequences as in  \textbf{(A1)} for these   
cases.

2. 
Axiom \textbf{(A2)} is fulfilled.
\end{lem}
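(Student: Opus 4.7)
The plan is to establish both parts by specialising Morel's analysis of the homotopy $t$-structure on $SH(k)$ (and on $SH^{S^{1}}(k)$); see \cite{Mor}.

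For part 1, I would use the classical Deligne-style construction of the coniveau filtration. For $X \in \sv$ one considers the pro-system of open subschemes $X = X^{(0)} \supset X^{(1)} \supset \ldots$, where $X^{(p)} = \varinjlim_{\mathrm{codim}\, Z \ge p}(X \setminus Z)$. Applying the cohomological functor $H$ to the induced cofibre sequences $X^{(p+1)} \to X^{(p)} \to X^{(p)}/X^{(p+1)}$ yields an exact couple, and hence a bounded and strongly convergent spectral sequence with abutment $H^{p+q}_{0}(\mathcal{M}(X))$ (convergence being immediate since $X^{(p)}$ is contractible for $p > \dim X$). The key content is the $E_{1}$-identification: in the filtered colimit over Nisnevich neighbourhoods of a codimension-$p$ point $x$, the cofibre $X^{(p)}/X^{(p+1)}$ reduces locally to the Thom space of the normal bundle of a smooth model of $\overline{\{x\}}$. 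The Morel-Voevodsky homotopy purity theorem, combined with the Thom isomorphism in $SH(k)$, identifies this contribution with a $\{p\}$-twist of $\mathcal{M}$ of the residue field, yielding $E_{1}^{p,q} = \coprod_{x \in X^{(p)}} H^{q}_{p}(x)$ as in the statement.

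For part 2, I would invoke Morel's description of $t_{hom}$: the positive part $\mathfrak{D}(k)_{t_{hom} \ge 0}$ is characterised as the full subcategory of $C$ such that the Nisnevich homotopy sheaves $\underline{\pi}_{i}(C)_{j}$ vanish for all $i > 0$ and all admissible $j$ ($j \in \mathbb{Z}$ for $SH(k)$; $j \ge 0$ for $SH^{S^{1}}(k)$). Since a Nisnevich sheaf on $\sv$ is trivial precisely when all its stalks at generic points of smooth varieties vanish, and since by item 4 of Definition \ref{thmdf} these stalks are exactly the groups $\mathfrak{D}(\mathcal{M}(K)\{j\}, C[i])$ for function fields $K/k$, axiom (A2) follows directly.

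The main obstacle is the rigorous verification of the $E_{1}$-identification in part 1, which requires careful bookkeeping of the filtered colimits of open complements alongside the homotopy purity and Thom isomorphisms; these are standard in the literature but technical to unravel. In the effective setting one must additionally restrict to the range $j \ge 0$ where twists $-\{j\}$ are defined on $SH^{S^{1}}(k)$ and invoke the corresponding effective versions of the purity and Thom statements. The description of $t_{hom}$ underlying part 2 is one of the central theorems of \cite{Mor}, and its transfer to our formulation is then essentially automatic.
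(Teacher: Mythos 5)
Your treatment of part 1 is a faithful reconstruction of the standard coniveau construction (codimension filtration, exact couple, purity and Thom isomorphism to identify $E_1$), which is exactly what the paper's cited reference \cite{bgn} (itself modelled on \cite{chk}) does; one small slip is that $X^{(p)}=\varinjlim_{\operatorname{codim}Z\ge p}(X\setminus Z)$ equals $X$ (not a point) for $p>\dim X$ — it is the successive \emph{cofibres} $X^{(p)}/X^{(p+1)}$ that vanish in that range, which is what gives finiteness of the filtration and hence convergence.

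Part 2, however, contains a genuine gap. The step ``a Nisnevich sheaf on $\sv$ is trivial precisely when all its stalks at generic points of smooth varieties vanish'' is false for a general Nisnevich sheaf: a nonzero sheaf supported in positive codimension (for instance the sheafification of a presheaf supported on a fixed closed subvariety) has vanishing generic stalks. What saves the argument is that the homotopy sheaves $\underline{\pi}_i(C)_j$ of an object $C$ of $SH(k)$ are \emph{strictly $\mathbb{A}^1$-invariant}, and it is a theorem of Morel (the stable $\mathbb{A}^1$-connectivity theorem, together with the resulting unramifiedness/Gersten resolution of such sheaves) that a strictly $\mathbb{A}^1$-invariant sheaf vanishes if and only if its sections over all finitely generated field extensions of $k$ vanish. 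This is precisely the nontrivial input behind axiom \textbf{(A2)}, and it is what the paper's citation of \cite{bgn} (and, for the $SH^{S^1}$ case, of \cite{Mor1}) encodes. So the logical skeleton of your part 2 is right — translate $t_{hom}$-positivity into vanishing of homotopy sheaves, then translate that into vanishing of function-field stalks — but the second translation requires Morel's theorem and cannot be justified by a general fact about Nisnevich sheaves.
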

\begin{proof}
1. See Proposition 4.3.1(I.3) and Remark 4.3.2(2) of \cite{bgn} (note that the construction there was inspired by the corresponding results from \cite{chk}).

2. See Proposition 5.1.1(5) of \cite{bgn}.
\end{proof}

\begin{rema}
 Thus, our axioms \textbf{(A1)} and \textbf{(A2)} above are fulfilled for $SH(k)$ and $SH^{eff}(k)$.  Therefore, Corollary \ref{conspecseq} can be applied in this case as well. 
\end{rema}

\begin{pr}\label{thshc}

\begin{enumerate}
\item $SH(k)_{\geq 0} = \{E \in SH(k) | \underline{\pi}_{i}(E)_{j}=0 \ for\  i>0, j \in \mathbb{Z}\}$.
\item $SH(k)_{\leq 0} = \{E \in SH(k) | \underline{\pi}_{i}(E)_{j}=0\ for\ i<0, j \in \mathbb{Z}\}$.
\item There is an adjunction $ i^{SH}: SH^{eff}(k) \rightleftarrows SH(k): \omega^{SH}$. Moreover,  $i^{SH}$ is right $t_{hom}-$exact, and $\omega^{SH}$ is  $t_{hom}-$exact.
\item The functor $E \mapsto \underline{\pi}_{0}(E)_{*}$ induces an equivalence of categories $\hrt^{SH}$ and $HI_{*}(k)$, where $HI_{*}(k)$ is the category of homotopy modules (see Definition 5.2.4 of \cite{Mor}). Next, the functor $E \mapsto \underline{\pi}_{0}(E)_{0}$
induces an equivalence $\hrt^{SH^{eff}} \cong HI^{fr}(k)$, where $HI^{fr}(k)$ is the category of homotopy invariant stable Nisnevich sheaves with framed transfers (see \S1 of \cite{gap} for this definitions).
\item For $E \in \underline{Ht}$, $X \in \sv$, and $n \in \mathbb{Z}$ we have $H_{Nis}^{n}(X, \underline{\pi}_{0}(E)_{0}) \cong E^{n}_{0}(X)$.
\end{enumerate}
\end{pr}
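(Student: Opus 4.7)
For Part (1), I plan to match the axiomatic description \textbf{(A2)} with sheaf vanishing. The stalk of the Nisnevich sheaf $\underline{\pi}_i(E)_j$ at the generic point corresponding to a function field $K$ equals the colimit $\varinjlim_{X,\ k(X)=K} E^i_j(X) = SH(\mathcal{M}(K)\{j\}, E[i])$ of Definition \ref{thmdf}(4). Thus one direction is immediate: if all $\underline{\pi}_i(E)_j$ with $i>0$ vanish, then \textbf{(A2)} gives $E \in SH(k)_{\geq 0}$. For the converse I would invoke Morel's strict $\mathbb{A}^1$-invariance theorem: the sheaves $\underline{\pi}_i(E)_j$ are strictly $\mathbb{A}^1$-invariant for any $E \in SH(k)$, and such sheaves over a perfect field inject into their generic stalks, so \textbf{(A2)} forces them to vanish.

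For Part (2), I would observe that standard $t$-structure formalism combined with Proposition \ref{cotsfc} gives $SH(k)_{\leq 0} = SH(k)_{\geq 1}^{\perp}$; by the generation of $SH(k)_{\geq 1}$ via the objects $\mathcal{M}(X)\{j\}[m]$ with $m \geq 1$, this translates to $E^n_j(X) = 0$ for all $X \in \sv$, $j \in \mathbb{Z}$, $n<0$, trivially implying $\underline{\pi}_n(E)_j = 0$. For the converse I would apply the coniveau spectral sequence of Axiom \textbf{(A1)} to the cohomological functor $M \mapsto SH(M\{j\}, E)$: its $E_1^{p,q}$ entries are coproducts of stalks of $\underline{\pi}_q(E)_{p+j}$ at codimension-$p$ points, which vanish for $q<0$ by assumption, so convergence (noting $p\geq 0$) forces $E^n_j(X) = 0$ for $n<0$.

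For Part (3), the adjunction $(i^{SH}, \omega^{SH})$ is exactly $(\sigma, \omega)$ from Remark \ref{cgmus}(3). Right $t_{hom}$-exactness of $i^{SH}$ follows from Proposition \ref{cotsfc}: the generators $\mathcal{M}_{SH^{S^1}}(X)$ of $SH^{S^1}(k)_{\geq 0}$ map to $\mathcal{M}_{SH}(X) \in SH(k)_{\geq 0}$, and $i^{SH}$ preserves coproducts, shifts, and extensions; hence $\omega^{SH}$ is left $t_{hom}$-exact by adjunction. For its right $t_{hom}$-exactness, for $E \in SH(k)_{\geq 0}$, a function field $K$, $j \geq 0$, and $i>0$, adjunction yields $SH^{S^1}(\mathcal{M}(K)\{j\}, \omega^{SH}(E)[i]) \cong SH(\mathcal{M}(K)\{j\}, E[i]) = 0$, placing $\omega^{SH}(E)$ in $SH^{S^1}(k)_{\geq 0}$ by the effective version of \textbf{(A2)}.

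For Part (4), I would invoke Morel's classical identification $\hrt^{SH}\cong HI_*(k)$ via $E \mapsto \underline{\pi}_0(E)_*$ (see \cite{Mor}) together with the framed-transfer analogue $\hrt^{SH^{eff}}\cong HI^{fr}(k)$ from \cite{gap}. Part (5) then follows from Part (4) and the coniveau spectral sequence of \textbf{(A1)}: for $E \in \hrt^{SH^{eff}}$, Parts (1)--(2) force the $q \neq 0$ rows to vanish, and the surviving $q=0$ row is exactly the Gersten complex for the strictly $\mathbb{A}^1$-invariant sheaf $\underline{\pi}_0(E)_0$ (using the adjunction $\{p\} \dashv (-)_{-p}$ to identify the terms), whose cohomology computes $H^*_{Nis}(X, \underline{\pi}_0(E)_0)$ by Morel's Gersten theorem. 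The main obstacle throughout is Part (2)'s converse, which requires careful spectral-sequence bookkeeping and hinges on Morel's $\mathbb{A}^1$-connectivity theorem; Part (4) also imports substantial recent results of Morel and the framed-transfers community that I would cite rather than reprove.
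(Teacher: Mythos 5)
The paper's own proof of this proposition is entirely by citation: Parts (1)--(2) are cited to Hoyois (Theorem 2.3 of \cite{hoy}), Part (3) to \cite{bondegl} and \cite{binfeff}, Part (4) to Morel and \cite{by}, and Part (5) to \cite{bgn}. Your proposal is a genuinely different route: you reconstruct (1), (2), and (5) from the paper's axioms \textbf{(A1)}--\textbf{(A2)} together with Morel's strict $\mathbb{A}^1$-invariance and Gersten resolution, instead of appealing directly to the external statements. This is a legitimate and instructive alternative: the converse direction of (1) needs exactly the "unramified" injectivity into generic stalks, your coniveau spectral sequence argument for (2) is correct (with $p\ge 0$ and $q<0$ vanishing, every anti-diagonal $p+q<0$ dies), and the degeneration argument for (5) is sound provided one checks that $\underline{\pi}_0(E)_p$ agrees with the $p$-fold contraction of $\underline{\pi}_0(E)_0$, which follows from the $t_{hom}$-exactness of $(-)_{-1}$ (Corollary \ref{conspecseq}(\ref{axcor0})). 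What the paper's citation approach buys is brevity and independence from a potential circularity in deriving \textbf{(A2)}; what yours buys is visibility of exactly which deep inputs (Morel's connectivity and Gersten theorems) are being used.

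There is, however, one concrete error in Part (3). You identify the adjunction $i^{SH}\colon SH^{eff}(k)\rightleftarrows SH(k)\colon\omega^{SH}$ with the pair $\sigma\colon SH^{S^1}(k)\rightleftarrows SH(k)\colon\omega$ from Remark \ref{cgmus}(3). These are not the same. The category $SH^{eff}(k)$ is, by Remark \ref{cgmus}(2), the localizing subcategory of $SH(k)$ generated by the objects $\mathcal{M}_{SH}(X)$, and $i^{SH}$ is the (fully faithful) inclusion, whose right adjoint $\omega^{SH}$ exists by Brown representability. By contrast $SH^{S^1}(k)$ is the $S^1$-stable category, a different triangulated category that is \emph{not} a subcategory of $SH(k)$; $\sigma$ is the $\mathbb{P}^1$- (or $\mathbb{G}_m$-) stabilization functor and is not fully faithful. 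The paper explicitly warns that $SH^{S^1}(k)$ is not equivalent to any $\mathfrak{D}^{eff}$. That said, the argument you give — that $i^{SH}$ sends the generators $\mathcal{M}(X)$ of the effective aisle into $SH(k)_{\geq 0}$, hence is right $t_{hom}$-exact by Proposition \ref{cotsfc}; that $\omega^{SH}$ is left $t_{hom}$-exact as a right adjoint; and that right $t_{hom}$-exactness of $\omega^{SH}$ follows from the adjunction isomorphism applied to the stalks $\mathcal{M}(K)\{j\}$ together with \textbf{(A2)} — carries over verbatim to the correct adjunction $SH^{eff}(k)\hookrightarrow SH(k)$, so the logic is salvageable; only the identification of the functors needs to be fixed.
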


\begin{proof}
1,2. This has been proved in Theorem 2.3 of \cite{hoy}.\\
3. See Corollary 3.3.7(2) of \cite{bondegl} or Proposition 2.2.5(1) of \cite{binfeff}.\\
4. See Theorem 5.2.6 of \cite{Mor} and Theorem 5.14 of \cite{by}.\\
5. This fact is well-known; see Proposition 5.1.1(8) of \cite{bgn}.\\
\end{proof}

\section{"Smooth" aisles and birationality filtrations in motivic categories}\label{coresul}

This section contains the central technical results of the paper.\\
In \S\ref{kbmrotp} we define certain aisles in terms of motives of smooth varieties and  describe them 
 in terms of the 
 stalks corresponding to function fields. Next we prove that $t_{hom}-$homology respect these classes.\\
In \S\ref{wocvs} we introduce the $n-$birational categories $\mathfrak{D}^{n-bir}$. 
 We use our aisles to  study 
  the birationality filtration on $\hrt^{eff}_{hom}$, and 
   prove that the homotopy $t-$structure restricts to $\mathfrak{D}^{n-bir}\subset \mathfrak{D}^{eff}$. 

\subsection{Smoothly generated aisles: definition and main properties}\label{kbmrotp} 

\begin{defi}\label{naortcon}
If $s=(s_{j})$ is  a non-decreasing sequence in $\mathbb{Z}\cup \{\pm \infty\}$, $j \in \mathbb{Z}$,    then we define an 
aisle (see Remark \ref{aiaiais} below)  $\mathcal{A}_{s}$ as $(\{\mathcal{M}_{R}(\sv)\lan j \ra [t_{j}]\})^{\perp_{\mathfrak{D}}}$, where we take all $t_{j}<s_{j}$, $t_j\in \z$.

We 
 will also use the 
  following modification of this definition in the effective setting: 
   we take $j \geq 0$ and take the orthogonal in $\mathfrak{D}^{eff}$.  
\end{defi}

\begin{lem}\label{amofgw}
1. We have the following equality: $\mathcal{A}_{s}= \{C \in \mathfrak{D} | \mathfrak{D}(\mathcal{M}(K)\lan j \ra[t_{i}], C)=0\}$ for all function fields $K/k$, $j \in \mathbb{Z}$, and $t_{j}<s_{j}$ (see Definition \ref{thmdf}(4) for the corresponding notation).

2. Similar assertion holds in the effective case if one takes $j \geq 0$. 

\end{lem}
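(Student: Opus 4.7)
My plan is to split the stated equality into two inclusions.

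The inclusion ``$\subseteq$'' is immediate in both parts. Indeed, if $C \in \mathcal{A}_{s}$ then by definition $\mathfrak{D}(\mathcal{M}(X)\lan j\ra[t_{j}], C) = 0$ for every $X \in \sv$ and every pair $(j, t_{j})$ with $t_{j} < s_{j}$ (and $j \geq 0$ in the effective setting). Passing to the filtered colimit over smooth models $X_{i}$ with $k(X_{i}) = K$ --- which is precisely how the symbol $\mathfrak{D}(\mathcal{M}(K)\lan j\ra[t_{j}], -)$ was defined in Definition~\ref{thmdf}(4) --- yields the vanishing for function fields as well.

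For the converse inclusion in part 1 I fix $X \in \sv$, $j_{0} \in \mathbb{Z}$ and $t_{0} < s_{j_{0}}$, and aim to prove $\mathfrak{D}(\mathcal{M}(X)\lan j_{0}\ra[t_{0}], C) = 0$. The key step is to apply Axiom \textbf{(A1)} to the cohomological functor
\[
H(M) \;:=\; \mathfrak{D}\bigl(M \lan j_{0}\ra[t_{0}],\, C\bigr),
\]
which is cohomological because $-\lan j_{0}\ra[t_{0}]$ is a triangulated endofunctor of $\mathfrak{D}$. Its value $H^{0}_{0}(\mathcal{M}(X)) = H(\mathcal{M}(X))$ is the abutment of the resulting coniveau spectral sequence in total degree zero, i.e.\ exactly the group I wish to kill. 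A straightforward unwinding using $\mathcal{M}(X_{i})\{p\}[-q] = \mathcal{M}(X_{i})\lan p\ra[-p-q]$ gives, on the anti-diagonal $p+q = 0$,
\[
H^{-p}_{p}(x) \;=\; \varinjlim_{X_{i},\, k(X_{i}) = k(x)} \mathfrak{D}\bigl(\mathcal{M}(X_{i})\lan p+j_{0}\ra[t_{0}],\, C\bigr) \;=\; \mathfrak{D}\bigl(\mathcal{M}(k(x))\lan p+j_{0}\ra[t_{0}],\, C\bigr).
\]
Since $s = (s_{j})$ is non-decreasing and $p \geq 0$, one has $s_{p+j_{0}} \geq s_{j_{0}} > t_{0}$, so the hypothesis forces each $E_{1}^{p,-p}$ to vanish. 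The columns with $p > \dim X$ are trivially zero, hence the spectral sequence is concentrated in finitely many columns and converges strongly, giving $H(\mathcal{M}(X)) = 0$, as required.

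Part 2 is handled by the literally identical computation performed inside $\mathfrak{D}^{eff}$: since $j_{0} \geq 0$ and $p \geq 0$ in the spectral sequence, the twists $\lan p + j_{0}\ra$ that appear are always non-negative, so the functor $H$ above is well defined and cohomological on $\mathfrak{D}^{eff}$ and Axiom \textbf{(A1)} applies verbatim. I do not foresee any serious obstacle; the only point requiring a moment's thought is to choose $H$ so that the spectral sequence abuts to the desired group in total degree zero, and then the monotonicity of $s$ together with Axiom \textbf{(A1)} do all of the work.
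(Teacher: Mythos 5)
Your proof is correct and takes essentially the same route as the paper: the easy inclusion by passing to the filtered colimit over smooth models, and the converse via the coniveau spectral sequence of Axiom \textbf{(A1)}. Your version is in fact a little more transparent, since you pin down the cohomological functor $H(M)=\mathfrak{D}(M\lan j_0\ra[t_0],C)$ explicitly before invoking \textbf{(A1)}, whereas the paper applies \textbf{(A1)} to $\mathfrak{D}(-,N)$ and then re-indexes the resulting spectral sequence; the bookkeeping you carried out (using that $s$ is non-decreasing and $p\ge 0$ so that $t_0<s_{j_0}\le s_{p+j_0}$) is exactly the paper's point.
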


\begin{proof}
1. We  should prove that $\mathcal{A}_{s}=(\{\cup_{j\in \mathbb{Z}}\mathcal{M}_{R}(K)\lan j \ra[t_{i}]\})^{\perp_{\mathfrak{D}}}$. 
Obviously, the first of these classes is contained in the second one; cf. 
 Definition \ref{thmdf}(4). Conversely, if  $N$ belongs to the second class
 then the convergent coniveau spectral sequence  $$E_{1}^{p,q}=\underset{x\in X^{(p)}}{\bigoplus}H^{q}_{j+p-t_{j}}(x) \Rightarrow H^{p+q-t_{j}}_{j}(\mathcal{M}(X))$$ from \textbf{(A1)} yields that it belongs to $\mathcal{A}_{s}$;
 here $X^{p}$ is the set of points 
of $X$ of codimension $p$, and $H^{q}_{p}(-)$ are the following cohomology theories (on $\sv$): $H^{q}_{p}(X):= \mathfrak{D}(\mathcal{M}_{R}(X)\lan p \ra [-p-q], N)$ (see Definition \ref{thmdf}(3)).

2. In the effective case the proof is completely similar.
\end{proof}


\begin{pr}\label{wcatheq}
The following conditions for an object $C \in \mathfrak{D}$  are equivalent:\\
1. $C \in \mathcal{A}_{s}$.\\
2. $H_{r}^{t_{hom}}(C)[r] \in \mathcal{A}_{s}$ for all $r \in \mathbb{Z}$. 

The obvious effective version of this statement is valid as well. 
\end{pr}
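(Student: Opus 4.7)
The strategy is to reduce both conditions to a single collection of vanishings in the heart $\hrt$ of $t_{hom}$. By Lemma \ref{amofgw}, $C \in \mathcal{A}_s$ is equivalent to $\mathfrak{D}(\mathcal{M}(K)\lan j\ra, C[n])=0$ for every function field $K/k$, every $j$ (with $j\ge 0$ in the effective case), and every $n > -s_j$. The main technical input is therefore an isomorphism
$$\mathfrak{D}(\mathcal{M}(K)\lan j\ra, C[n]) \cong \mathfrak{D}(\mathcal{M}(K)\lan j\ra, H_{j-n}^{t_{hom}}(C)[j]), \qquad (*)$$
valid for any $C$ and any integers $j,n$ (with $j\ge 0$ in the effective case), which computes the function-field cohomology of $C$ from a single $t_{hom}$-homology object.

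To prove $(*)$ I would apply $\mathfrak{D}(\mathcal{M}(K)\lan j\ra, -[n])$ to the two distinguished triangles $\tau_{\ge j-n}C\to C\to \tau_{\le j-n-1}C$ and $\tau_{\ge j-n+1}C\to \tau_{\ge j-n}C\to H_{j-n}^{t_{hom}}(C)[j-n]$, and read off both isomorphisms from the resulting long exact sequences after checking that the four ``outer'' contributions vanish. For the contributions from $\tau_{\le j-n-1}C$, which after shifting by $n$ or $n-1$ lie in $\mathfrak{D}_{t_{hom}\le j-1}$, vanishing is immediate from the orthogonality $\mathfrak{D}_{t_{hom}\ge j}\perp \mathfrak{D}_{t_{hom}\le j-1}$ combined with $\mathcal{M}(K)\lan j\ra = \mathcal{M}(K)\{j\}[j]\in \mathfrak{D}_{t_{hom}\ge j}$. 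The harder case is the contributions from $\tau_{\ge j-n+1}C$, which after shifting lie in $\mathfrak{D}_{t_{hom}\ge j+1}$ and $\mathfrak{D}_{t_{hom}\ge j+2}$: both sides of the Hom sit in the same aisle of $t_{hom}$, so pure orthogonality is insufficient. This is where axiom \textbf{(A2)} enters: rewriting via $\{j\}=\lan j\ra[-j]$, \textbf{(A2)} gives $\mathfrak{D}(\mathcal{M}(K)\lan j\ra, Y)=0$ for every $Y\in \mathfrak{D}_{t_{hom}\ge j+1}$, and the same argument with one more shift handles $Y\in \mathfrak{D}_{t_{hom}\ge j+2}$.

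With $(*)$ in hand the equivalence is bookkeeping. Via $(*)$ and the substitution $r=j-n$, condition (1) becomes the vanishing of $\mathfrak{D}(\mathcal{M}(K)\lan j\ra, H_r^{t_{hom}}(C)[j])$ for every triple $(K,j,r)$ with $r<j+s_j$. Applying $(*)$ in place of $C$ to the object $H_r^{t_{hom}}(C)[r]$, whose only nonzero $t_{hom}$-homology sits in degree $r$, shows that for each fixed $r$ the condition ``$H_r^{t_{hom}}(C)[r]\in \mathcal{A}_s$'' is equivalent to the same vanishing restricted to that $r$. Hence (1) holds iff (2) does, and the effective statement is proved by the same argument with $j\ge 0$ throughout. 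The main obstacle is therefore the upper-vanishing step of $(*)$: without axiom \textbf{(A2)}, the two sides of the Hom lie on the same side of the $t$-structure, and pure $t$-structure orthogonality is not enough.
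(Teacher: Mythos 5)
Your proof is correct and follows the same overall strategy as the paper: reduce via Lemma \ref{amofgw} to the isomorphism $\mathfrak{D}(\mathcal{M}(K)\{j\}[t], C)\cong \mathfrak{D}(\mathcal{M}(K)\{j\}, H_t^{t_{hom}}(C))$, which is exactly your $(*)$ rewritten with the twist $\{j\}=\lan j\ra[-j]$ and $t=j-n$. The genuine difference is in how this isomorphism is established. The paper first reduces to $C\in\mathfrak{D}_{t_{hom}\le t}$ using the definition of $t_{hom}$ and the adjunction of Remark \ref{ronts}, and then appeals to Corollary 2.4 of \cite{hoy} — an external computation tied to the $SH(k)$ setting — for the remaining case. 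You instead prove $(*)$ in-house by two long-exact-sequence steps: the lower truncation is killed by bare $t$-structure orthogonality (since $\mathcal{M}(X)\lan j\ra\in\mathfrak{D}_{t_{hom}\ge j}$ for each model $X$), and the upper truncation is killed by Axiom \textbf{(A2)}. This is arguably cleaner for the paper's purposes, since it stays entirely inside the axiomatic framework of \S\ref{thomaxc} and so applies verbatim to every $\mathfrak{D}$ satisfying (A1)--(A2), rather than implicitly routing through a computation in $SH(k)$. The slight abuse of treating $\mathcal{M}(K)\lan j\ra$ as an object of $\mathfrak{D}_{t_{hom}\ge j}$ is harmless: all your vanishings are checked at each level $\mathcal{M}(X)\lan j\ra$ and then passed to the filtered colimit defining the stalk. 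The closing bookkeeping — substituting $r=j-n$ and feeding $H_r^{t_{hom}}(C)[r]$ back into $(*)$ — is also correct and merely makes explicit what the paper condenses into ``it suffices to prove the isomorphism.''
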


\begin{proof}
 By the previous lemma 
  it suffices  to prove that 
  $\mathfrak{D}(\mathcal{M}(K)\{ j \}[t_{i}], C) \cong \mathfrak{D}(\mathcal{M}(K)\{ j \}, H_{t_{i}}^{t_{hom}}(C))$ for all function fields $K/k$ and $ j \in \mathbb{Z}$. 
 This isomorphism is well known. By the definition of  
  $t_{hom}^{\mathfrak{D}}$ (see Proposition \ref{thshc}(1,2)) and the adjunction from Remark \ref{ronts} it suffices to verify it for $C\in \mathfrak{D}_{t_{hom} \leq t_{i}}$, and in the latter case it follows from  
   the well-known Corollary 2.4 of \cite{hoy}. 

In the effective case, the proof is literally the same.
\end{proof}

\begin{rema}\label{aiaiais}
1. It is noteworthy that our $\mathcal{A}_{s}$ are {\it aisles} (see \S1 of \cite{ajs} for the definition and discussions on this notion). Indeed, the pair 
$(\mathcal{A}_{s}, \mathcal{A}_{s}^{\perp})$ 
 gives a $t-$structure for each $\mathcal{A}_{s}$; see \S\ref{wsmooo} below.

2. Note also that the assumption  that the sequence $s$ in  Definition \ref{naortcon} is non-decreasing is not restrictive. Indeed, 
  if we take  $s'_j= \operatorname{sup}_{i \le j} s_i $ (for the corresponding values of $j$)  
 then our definition easily implies the equality $\mathcal{A}_{s'} = \mathcal{A}_{s}$. 
  To obtain the non-obvious inclusion here
 it suffices to recall that $\mathcal{M}(X) \lan n \ra$ is a retract of $\mathcal{M}(\mathbb{P}^{n}_{X})$ for any $X \in \sv$.
 
 For this reason, we only consider non-decreasing sequences; this also simplifies some of the formulations 
(see Propositions \ref{becares} and \ref{wawwes} below).
\end{rema}

\subsection{Weakly birational categories in terms of smooth aisles}\label{wocvs}
In this section we apply our results to shift-stable aisles.

\begin{itemize}
\item Let $n\ge 0$. Let us recall some 
 properties of the $n-$birational motivic category $\mathfrak{D}^{n-bir}:=\mathfrak{D}^{eff}\{n+1\}^{\perp}$. Firstly,
we have an adjunction $p_{n}: \mathfrak{D}^{eff} \rightleftarrows \mathfrak{D}^{eff}/\mathfrak{D}^{eff}\{n+1\}: i^{(n)}$; here $p_{n}$ is the corresponding localization, the functor 
$i^{(n)}$ is fully faithful and induces an equivalence $\mathfrak{D}^{eff}/\mathfrak{D}^{eff}\{n+1\} \cong \mathfrak{D}^{n-bir}$, and  there exists a right adjoint $R_{nr, n}$ to it.\footnote{The 
 motivation for our terminology is that the localization functor $\mathcal{M}^{n-bir}_{R}:=p_{n}\circ \mathcal{M}_{R}$ sends 
  all open immersions $U \to V$ with  $codim_{V}(V\setminus U) \geq n+1$ into isomorphisms.} These statements 
 follow from well-known abstract nonsense; see Proposition 3.6 and Theorem 1.4(2) of \cite{pel} or Theorem A.2.6 and Lemma 4.5.4 of \cite{kabir}.

\item Also we recall very briefly the notion of so-called {\it slices}; see \S1 of \cite{pel} and \S4.2 of \cite{kabir} for details. We have an adjunction $i_{n} : \mathfrak{D}^{eff}\{n\} \rightleftarrows \mathfrak{D}^{eff} : r_{n}$, which defines a functor
$\nu^{\geq n} : \mathfrak{D}^{eff} \to \mathfrak{D}^{eff}$ as the composition $i_{n} \circ r_{n}$. 
Then we have the following {\it slice filtration} triangle for any $C \in \mathfrak{D}^{eff}$: 
\begin{equation}\label{sls} \nu^{\geq n}(C) \to C \to i^{(n-1)}p_{n-1}(C) \to \nu^{\geq n}(C)[1] \end{equation}
\end{itemize}

Next we study the properties of objects from the heart of $t_{hom}^{eff}$ with respect to the $\mathcal{A}_{s}-$filtration. Recall that objects of $\hrt^{eff}_{hom}$ for all categories considered in this article 
are   
  "sheaf-like" (see Propositions \ref{thshc}(4), \ref{thpragg}(2), \ref{thsss}(3), \ref{thdera}(3) for the corresponding descriptions). 

\begin{pr}\label{epowbo}
 Assume that $r\ge -1$ is fixed, and define a sequence $s^{r-bir}$ as follows: $s_{j}^{r-bir} = -\infty$ for $0\le j \le r$ and 
  $s_{j}^{r-bir}= +\infty$ for $j \geq r+1$. Then for $S \in \obj \hrt^{eff}_{hom} \subset \mathfrak{D}^{eff}$ the following conditions are equivalent:\\
1. $S \in \mathcal{A}_{s}$.\\
2. The Nisnevich cohomology of $S$ vanishes in degrees $> r$.\\
3. $S(K\{m\})=\ns$ for all function fields $K/k$ and $m>r$, where $S(K\{m\})=\mathfrak{D}^{eff}(\mathcal{M}(K)\{m\}, S)$.\\
4. $S(K\{r+1\})=\ns$ for all function fields $K/k$.\\
5. $S_{-r-1}=0$ (see Definition \ref{thmdf}(5) for this notation). 

\end{pr}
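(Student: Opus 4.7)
The plan is to prove $1\Leftrightarrow 3$ by unfolding the definition of the aisle, then $3\Leftrightarrow 4\Leftrightarrow 5$ via the adjunction $-\{1\}\dashv -_{-1}$ and conservativity of $\Phi_0$, and finally $3\Leftrightarrow 2$ via the coniveau spectral sequence together with the retraction $\mathcal{M}(X)\lan n\ra\hookrightarrow\mathcal{M}(\p^n_X)$ from Remark \ref{aiaiais}(2). For $1\Leftrightarrow 3$: Lemma \ref{amofgw}(2) unfolds condition~1 as the vanishing of $\mathfrak{D}^{eff}(\mathcal{M}(K)\{j\}[t],S)$ for every function field $K/k$, every $j\ge r+1$, and every $t\in\mathbb{Z}$. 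Because $S\in\hrt^{eff}_{hom}$, only $t=0$ can contribute: for $t\ge 1$ the object $\mathcal{M}(X)\{j\}[t]$ lies in $\mathfrak{D}^{eff}_{t_{hom}\ge 1}$ (since $t_{hom}^{eff}$ is generated by $\mathcal{M}(X)$ for $X\in\sv$), hence is orthogonal to $S\in\mathfrak{D}^{eff}_{t_{hom}\le 0}$, and the vanishing persists after the direct limit defining $\mathcal{M}(K)$; for $t\le -1$ the group rewrites as $\mathfrak{D}^{eff}(\mathcal{M}(K)\{j\},S[-t])$ with $-t\ge 1$, which vanishes by axiom~\textbf{(A2)} applied to $S\in\mathfrak{D}^{eff}_{t_{hom}\ge 0}$. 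Hence condition~1 collapses to condition~3.

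For $3\Leftrightarrow 4\Leftrightarrow 5$: the adjunction $-\{1\}\dashv -_{-1}$ gives $S(K\{m\})=\mathfrak{D}^{eff}(\mathcal{M}(K),S_{-m})=\Phi_0(S_{-m})(K)$. By Corollary \ref{conspecseq}(1), $-_{-1}$ is $t_{hom}$-exact, so each $S_{-m}$ lies in $\hrt^{eff}_{hom}$; by Corollary \ref{conspecseq}(2), $\Phi_0$ is conservative there, so $\Phi_0(S_{-m})(K)=0$ for every $K$ is equivalent to $S_{-m}=0$. This yields $4\Leftrightarrow 5$; moreover $5\Rightarrow 3$ because $S_{-m}=(S_{-r-1})_{-(m-r-1)}=0$ for $m\ge r+1$, while $3\Rightarrow 4$ is the specialization $m=r+1$.

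For $3\Leftrightarrow 2$: apply the coniveau spectral sequence of axiom \textbf{(A1)} to the cohomological functor $H(C)=\mathfrak{D}^{eff}(C,S)$. The same heart argument forces $E_1^{p,q}=0$ for all $q\ne 0$, so the sequence collapses to the single row $E_1^{p,0}=\bigoplus_{x\in X^{(p)}}\mathfrak{D}^{eff}(\mathcal{M}(k(x))\{p\},S)$, whose cohomology computes $\mathfrak{D}^{eff}(\mathcal{M}(X),S[n])=H^n_{Nis}(X,\Phi_0(S))$ (by Proposition \ref{thshc}(5) and its analogues in the other motivic categories). Under condition~3 every $E_1^{p,0}$ vanishes for $p>r$, yielding condition~2. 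Conversely, by Remark \ref{aiaiais}(2), $\mathcal{M}(X)\lan r+1\ra$ is a retract of $\mathcal{M}(\p^{r+1}_X)$, so
\[
\Phi_0(S_{-r-1})(X)=\mathfrak{D}^{eff}(\mathcal{M}(X)\{r+1\},S)=\mathfrak{D}^{eff}(\mathcal{M}(X)\lan r+1\ra,S[r+1])
\]
is a direct summand of $\mathfrak{D}^{eff}(\mathcal{M}(\p^{r+1}_X),S[r+1])=H^{r+1}_{Nis}(\p^{r+1}_X,\Phi_0(S))$, which vanishes by condition~2 applied to $\p^{r+1}_X$; passing to function-field limits and invoking conservativity of $\Phi_0$ then forces $S_{-r-1}=0$, i.e.\ condition~5.

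The main technical obstacle is the bookkeeping in the coniveau spectral sequence: one must verify that for $S$ in the heart every $E_1^{p,q}$ with $q\ne 0$ actually vanishes (combining axiom \textbf{(A2)} with the generation description of $t_{hom}^{eff}$ and the compatibility of $\mathfrak{D}^{eff}(-,S)$ with the filtered colimits defining $\mathcal{M}(k(x))$), so that the abutment is genuinely identified with $H^n_{Nis}(X,\Phi_0(S))$ in the sense of Proposition \ref{thshc}(5); once this collapse is in hand, both $3\Rightarrow 2$ and $2\Rightarrow 5$ follow from elementary summand manipulations.
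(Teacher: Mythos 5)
Your proof is correct and follows essentially the same route as the paper: the equivalence $1\Leftrightarrow 3$ via Lemma \ref{amofgw} together with axiom \textbf{(A2)} and the heart condition, $3\Leftrightarrow 4\Leftrightarrow 5$ via the adjunction $-\{1\}\dashv(-)_{-1}$ together with Corollary \ref{conspecseq}, and the passage between (2) and the stalk conditions via the coniveau spectral sequence of \textbf{(A1)} plus the retraction of $\mathcal{M}(X)\lan n\ra$ inside the motive of a projective bundle. The only cosmetic differences are that you close the implication cycle as $2\Rightarrow 5$ (and then $5\Rightarrow 3$) rather than proving $2\Rightarrow 3$ directly, that you use $\p^{r+1}_X$ where the paper uses $(\p^1)^{\times m}\times X$, and that you spell out the collapse of the coniveau spectral sequence to its $q=0$ row, which the paper leaves implicit.
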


\begin{proof}
Axiom \textbf{(A2)} along with  Proposition \ref{amofgw}  immediately give  the equivalence of conditions (1) and (3). Recalling  
the spectral sequence  $$E_{1}^{p,q}=\underset{x\in X^{(p)}}{\coprod}S^{q}_{p}(x) \Rightarrow S^{p+q}_{0}(\mathcal{M}(X))$$  given by \textbf{(A1)} (here $X^{p}$ is the set of points of $X$ of codimension $p$). Now we note that 
$\mathfrak{D}^{eff}(\mathcal{M}(X), S[p+q]) \cong H^{p+q}_{Nis}(X, S)$ (see Proposition \ref{thshc}(5) and Remark \ref{oprnci}).
It easily follows that $(3) \Rightarrow (2)$. 
 
 Next, recall that  $S(K\{m\})=\varinjlim \limits_{X, k(X)=K}\md(\mmd(X)\otimes T^m[-m], S)$. Since $\md(\mmd(X)\otimes T^m, S)$ is a retract of $\md(\mmd((\p^1)^m(X)), S[m])$
  (see Remark \ref{rtate}(2)), we obtain  $(2) \Rightarrow (3)$. 
 
 Moreover, condition (3) obviously implies (4). Next, if (4) is fulfilled then $S_{-r-1}=0$ since we can apply Corollary \ref{conspecseq}(\ref{axcor1}) to $S_{-r-1}$. 
 Lastly, if (5) is valid then  $S_{-m}=0$ for all $m>r$; hence $S_{-m}(K)=\ns$ for all function fields $K/k$ by Axiom \textbf{(A2)}.
\end{proof}

\begin{rema}\label{eot}
1. It 
 is easily seen that $\mathcal{A}_{s^{r-bir}}=\mathfrak{D}^{eff}\{r+1\}^{\perp}$; see Remarks \ref{cgmus}(2,3).

The  objects of $\mathfrak{D}^{eff}\{r+1\}^{\perp}$ (that also satisfy the equivalent conditions of the proposition above) will be called {\it $r$-birational}. We will also say that they are {\it weakly birational}. 
Note also that our Proposition \ref{epowbo} generalizes Proposition 2.5.2 of \cite{kabir} (which follows from it if we put $r=0$).\\ 
 2. Clearly, 
 conditions 1, 2, and 3 of the proposition are equivalent in the  ("degenerate") case $r=+\infty$  as well. 
 
\end{rema}

\begin{pr}\label{thowbicgk}
  $t^{eff}_{hom}$ 
 restricts to a (homotopy) $t$-structure $t_{hom}^{n-bir}$ on the category $\mathfrak{D}^{n-bir}$. Respectively, the functor $p_{n}$ is right $t-$exact, whereas $i^{(n)}$ is $t-$exact and $R_{nr, n}$ is left $t-$exact.
\end{pr}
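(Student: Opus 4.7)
The plan is to identify $\mathfrak{D}^{n-bir}$ with the smooth aisle $\mathcal{A}_{s^{n-bir}}$ of Proposition \ref{epowbo} (with $r=n$) as already observed in Remark \ref{eot}(1), and then feed this identification into Proposition \ref{wcatheq}. Note that $\mathfrak{D}^{n-bir} = \mathfrak{D}^{eff}\{n+1\}^{\perp}$ is the right orthogonal to a shift-stable subcategory of $\mathfrak{D}^{eff}$, hence is itself stable under $[1]$ and $[-1]$. Combining this with Proposition \ref{wcatheq}, I obtain the equivalence
\begin{equation*}
C \in \mathfrak{D}^{n-bir} \iff H^{t_{hom}}_r(C) \in \mathfrak{D}^{n-bir} \text{ for every } r \in \mathbb{Z}
\end{equation*}
(the shift $[r]$ appearing in Proposition \ref{wcatheq} can be dropped thanks to the shift-stability just noted).

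Next, I would appeal to the standard criterion for a $t$-structure to restrict to a triangulated subcategory: since $\mathfrak{D}^{n-bir}$ is a triangulated subcategory of $\mathfrak{D}^{eff}$, it suffices to check that it is closed under one of the truncation functors $\tau^{t_{hom}}_{\geq 0}, \tau^{t_{hom}}_{<0}$ of $t_{hom}^{eff}$. Given $C \in \mathfrak{D}^{n-bir}$, the homology of $\tau^{t_{hom}}_{<0}(C)$ equals that of $C$ in negative degrees and is zero elsewhere; in particular all these homology objects lie in $\mathfrak{D}^{n-bir}$ by the above equivalence, and applying the same equivalence in the converse direction yields $\tau^{t_{hom}}_{<0}(C) \in \mathfrak{D}^{n-bir}$. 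The closure under $\tau^{t_{hom}}_{\geq 0}$ is then automatic from the truncation triangle, so the restricted $t$-structure $t_{hom}^{n-bir}$ is well-defined with $\mathfrak{D}^{n-bir}_{t \geq 0} = \mathfrak{D}^{n-bir} \cap \mathfrak{D}^{eff}_{t_{hom} \geq 0}$ and similarly for the other half.

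For the exactness assertions, $i^{(n)}$ is fully faithful and, by the very construction of the restriction, preserves both the $\geq 0$ and the $<0$ parts; hence it is $t$-exact. The standard adjunction-versus-$t$-exactness principle (if $F \dashv G$ and $G$ is left $t$-exact then $F$ is right $t$-exact, and dually) applied to the adjunctions $p_n \dashv i^{(n)} \dashv R_{nr,n}$ then produces the right $t$-exactness of $p_n$ and the left $t$-exactness of $R_{nr,n}$.

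I do not foresee any real obstacle: the entire argument is a clean application of Proposition \ref{wcatheq} in both directions, and the only point requiring a tiny bit of care is the passage between $H^{t_{hom}}_r(C)[r]$ and $H^{t_{hom}}_r(C)$, which is handled by the shift-stability of $\mathfrak{D}^{n-bir}$. The technical content of the result is entirely absorbed into Proposition \ref{wcatheq}; what remains above is bookkeeping with restriction criteria and adjunctions.
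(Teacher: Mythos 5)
Your proposal is correct and takes essentially the same route as the paper: the paper's proof consists of the single observation that the $t_{hom}$-truncations preserve $\mathfrak{D}^{n-bir}$, which it attributes to Proposition \ref{wcatheq}; you spell out how this follows (via the identification $\mathfrak{D}^{n-bir}=\mathcal{A}_{s^{n-bir}}$, the shift-stability of this class, and the homology description of truncations) and then derive the exactness assertions from the adjunction string $p_n\dashv i^{(n)}\dashv R_{nr,n}$, which the paper leaves implicit.
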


\begin{proof}
It suffices to prove that the $t_{hom}-$truncations preserve the class
$\mathfrak{D}^{n-bir}:= \mathfrak{D}^{eff}\{n+1\}^{\perp}$, and the latter fact follows from Proposition \ref{wcatheq} immediately.
\end{proof}

\begin{rema}
Our Proposition \ref{thowbicgk}  widely generalizes Theorem 4.4.1 of \cite{kabir} (only case of $DM^{eff}(k)$ and $n=0$ was considered there). Note also that the 
 arguments used in ibid. cannot be applied for $n>0$.
\end{rema}

\section{Smooth weight and $t$-structures and their applications}\label{weismot}

In \S\ref{jdogws} we recall the basics of the theory of weight structures.\\
In \S\ref{wsmooo} we give the definition of smooth weight structures $w_{Smooth}^{s}$ and $w_{Smooth}^{eff, s}$ corresponding to certain sequences $s_j$, and prove their main properties. We also express the conditions of {\it weak birationality} in terms of our $w_{Smooth}^{eff, s}$. 
Next we define a filtration on the $\hrt^{hom}$ using the adjacent $t-$structure $t_{\smo}^{eff}$, and prove some of its interesting properties.
 We also apply our results to the study of unramified cohomology.\\
In \S\ref{etfiloth} we prove some weak weight-exactness property of the functor $-\lan n \ra$ and so we obtain that  $\wchose$ 
  induces 
   a weight structure on the $n-$birational category $\mathfrak{D}^{n-bir}$ (actually, this is also true for  $w_{Smooth}^{eff, s}$ under certain assumptions on $s=(s_{j})$).

\subsection{Weight structures: a short recollection}\label{jdogws} 

Let us recall the definitions and some properties of weight structures.

\begin{defi}\label{dwstr}

I. A pair of subclasses $\cu_{w\le 0},\cu_{w\ge 0}\subset\obj \cu$ 
will be said to define a weight
structure $w$ for a triangulated category  $\cu$ if 
they  satisfy the following conditions.\footnote{ In the current paper we use the so-called homological convention for weight structures; 
whereas in \cite{bws} the cohomological convention was used. In the latter convention 
the roles of $\cu_{w\le 0}$ and $\cu_{w\ge 0}$ are interchanged, i.e., one considers $\cu^{w\le 0}=\cu_{w\ge 0}$ and $\cu^{w\ge 0}=\cu_{w\le 0}$.}

(i) $\cu_{w\ge 0},\cu_{w\le 0}$ are 
retraction-closed in $\cu$
(i.e., contain all $\cu$-retracts of their objects).

(ii) {\bf Semi-invariance with respect to translations.}

$\cu_{w\le 0}\subset \cu_{w\le 0}[1]$, $\cu_{w\ge 0}[1]\subset
\cu_{w\ge 0}$.

(iii) {\bf Orthogonality.}

$\cu_{w\le 0}\perp \cu_{w\ge 0}[1]$.

(iv) {\bf Weight decompositions}.

 For any $M\in\obj \cu$ there
exists a distinguished triangle
$$X\to M\to Y
{\to} X[1]$$
such that $X\in \cu_{w\le 0},\  Y\in \cu_{w\ge 0}[1]$.\end{defi}

We will also need the following definitions.

\begin{defi}\label{dwso}

Let $i,j\in \z$; assume that a triangulated category $\cu$ is endowed with a weight structure $w$.

\begin{enumerate}
\item\label{idh}
The full category $\hw\subset \cu$ whose object class is 
$\cu_{w=0}=\cu_{w\ge 0}\cap \cu_{w\le 0}$  is called the {\it heart} of the weight structure $w$.

\item\label{id=i}
 $\cu_{w\ge i}$ (resp. $\cu_{w\le i}$, resp.
$\cu_{w= i}$) will denote $\cu_{w\ge
0}[i]$ (resp. $\cu_{w\le 0}[i]$, resp. $\cu_{w= 0}[i]$).

\item\label{wgen} We will say that a weight structure $w$ is {\it generated by} a class $\mathcal{P}\subset \obj\cu$ if $\cu_{w\geq0} = (\cup_{i>0} \mathcal{P}[-i])^\perp$.




\item \label{lrbo} We will call $\cup_{i\in \z} \cu_{w\ge i}$ 
the class of $w-$bounded below 
objects of $\cu$; it will be denoted by $\cu_{+}$. 

\item\label{smash}
We will say that 
 $w$ is {\it smashing} if both the category $\cu$ and the class $\cu_{w\ge 0}$ is closed with respect to (small)  $\cu$-coproducts (cf. Proposition \ref{pbw}(\ref{leftsm})).

\item\label{idwe}
 Let 
  $\cu'$ be a triangulated category endowed with
 a weight structures $w'$; let $F:\cu\to \cu'$ be an exact functor.

$F$ is said to be  
{\it  weight-exact} 
(with respect to $w,w'$) if it maps
$\cu_{w\le 0}$ into $\cu'_{w'\le 0}$ and
sends $\cu_{w\ge 0}$ into $\cu'_{w'\ge 0}$. 

\item\label{idrest}
Let $\du$ be a full triangulated subcategory of $\cu$.

We will say that $w$ {\it restricts} to $\du$ whenever the couple $(\cu_{w\le 0}\cap \obj \du,\ \cu_{w\ge 0}\cap \obj \du)$ is a weight structure on $\du$.

\end{enumerate}
\end{defi}

\begin{rema}\label{rstws}

 A weight decomposition (of any $M\in \obj\cu$) is (almost) never canonical. 

Still for any $m\in \z$  axiom (iv) gives the existence of distinguished triangle \begin{equation}\label{ewd} w_{\le m}M\to M\to w_{\ge m+1}M \end{equation}  with some $ w_{\ge m+1}M\in \cu_{w\ge m+1}$ and $ w_{\le m}M\in \cu_{w\le m}$; we will call it an {\it $m$-weight decomposition} of $M$.

 We will often use this notation below (even though $w_{\ge m+1}M$ and $ w_{\le m}M$ are not canonically determined by $M$);
we will call any possible choice either of $w_{\ge m+1}M$ or of $ w_{\le m}M$ (for any $m\in \z$) a {\it weight truncation} of $M$.
Moreover, when we will write arrows of the type $w_{\le m}M\to M$ or $M\to w_{\ge m+1}M$ we will always assume that they come from some $m$-weight decomposition of $M$.  
\end{rema}

\begin{pr} \label{pbw}
Let $\cu$ be a triangulated category, $n\ge 0$; we will assume 
that $w$ is a fixed 
weight structure on $\cu$.

\begin{enumerate}

\item \label{idual}
The axiomatics of weight structures is self-dual, i.e., for $\du=\cu^{op}$
(so $\obj\du=\obj\cu$) there exists the (opposite)  weight
structure $w'$ for which $\du_{w'\le 0}=\cu_{w\ge 0}$ and
$\du_{w'\ge 0}=\cu_{w\le 0}$.

\item\label{leftsm}
 $\cu_{w\le 0}$ is closed with respect to $\cu$-coproducts. 

 \item\label{iort}
 $\cu_{w\ge 0}=(\cu_{w\le -1})^{\perp}$ and $\cu_{w\le 0}={}^{\perp} \cu_{w\ge 1}$. Thus if $w$ is generated by a class $\mathcal{P}$ then $\mathcal{P}\subset \cu_{w\leq 0}$.

\item\label{icompl} Let $ m\le l\in \z$, $X,X'\in \obj \cu$; fix certain weight decompositions
        of $X[-m]$ and $X'[-l]$. Then  any morphism
$g:X\to X'$ can be
extended 
to a commutative diagram of the corresponding distinguished triangles (see Remark \ref{rstws}(2)):
 $$\begin{CD} w_{\le m} X@>{}>>
X@>{}>> w_{\ge m+1}X\\
@VV{}V@VV{g}V@ VV{}V \\
w_{\le l} X'@>{}>>
X'@>{}>> w_{\ge l+1}X' \end{CD}
$$

Moreover, if $m<l$ then this extension is unique (provided that the rows are fixed).
\end{enumerate}
\end{pr}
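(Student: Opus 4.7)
I would prove the four parts in the order (1), (3), (2), (4), since (2) follows easily from (3), and (3) does not rely on (2).

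Part (1) is a direct verification of the axioms. In $\cu^{op}$ the suspension is the inverse of $[1]_{\cu}$, and the distinguished triangles are the rotations of those in $\cu$. Under the bijection $\du_{w'\le 0}=\cu_{w\ge 0}$, $\du_{w'\ge 0}=\cu_{w\le 0}$ each of the conditions (i)--(iv) in Definition~\ref{dwstr} transforms into itself after unwinding shifts and arrow directions; for instance, $\cu_{w\le 0}\perp \cu_{w\ge 1}$ becomes $\du_{w'\ge 0}\perp \du_{w'\le -1}$, i.e.\ $\du_{w'\le 0}\perp \du_{w'\ge 1}$, which is axiom (iii) for $w'$.

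For (3) I would prove $\cu_{w\ge 0}=(\cu_{w\le -1})^{\perp}$ directly; the second equality is the same statement in $\cu^{op}$ and so follows from (1). The inclusion $\subseteq$ is axiom (iii) after a single shift. Conversely, given $M\in(\cu_{w\le -1})^{\perp}$, I pick a weight decomposition $w_{\le -1}M\to M\to w_{\ge 0}M\to w_{\le -1}M[1]$: the map $w_{\le -1}M\to M$ lies in $\cu(w_{\le -1}M,M)=0$ by the assumption on $M$, so the triangle splits and exhibits $M$ as a retract of $w_{\ge 0}M\in\cu_{w\ge 0}$; retraction-closure (i) forces $M\in\cu_{w\ge 0}$. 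For the final sentence, if $w$ is generated by $\mathcal{P}$ then for $P\in\mathcal{P}$ and $Y\in\cu_{w\ge 1}$ I rewrite $\cu(P,Y)=\cu(P[-1],Y[-1])$ and use $P[-1]\in\bigcup_{i>0}\mathcal{P}[-i]$ together with $Y[-1]\in\cu_{w\ge 0}=(\bigcup_{i>0}\mathcal{P}[-i])^{\perp}$ to conclude $\mathcal{P}\subset {}^{\perp}\cu_{w\ge 1}=\cu_{w\le 0}$.

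Part (2) is then immediate from (3): for $\{X_\alpha\}\subset \cu_{w\le 0}$ and any $Y\in \cu_{w\ge 1}$, the universal property $\cu(\coprod X_\alpha,Y)=\prod\cu(X_\alpha,Y)=0$ gives $\coprod X_\alpha\in{}^{\perp}\cu_{w\ge 1}=\cu_{w\le 0}$. For (4) I would consider the composition $w_{\le m}X\to X\xrightarrow{g} X'\to w_{\ge l+1}X'$; its source lies in $\cu_{w\le m}$ and its target in $\cu_{w\ge l+1}\subset\cu_{w\ge m+1}$ (using $m\le l$), so orthogonality kills it. The extension property of distinguished triangles then produces a lift $w_{\le m}X\to w_{\le l}X'$ completing the left square, and a choice of cone yields the third arrow $w_{\ge m+1}X\to w_{\ge l+1}X'$ filling in the full morphism of triangles. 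For uniqueness when $m<l$: the difference $f_1-f_2$ of two lifts of the left square composes to zero with $w_{\le l}X'\to X'$, so factors through $w_{\ge l+1}X'[-1]\in\cu_{w\ge l}\subset\cu_{w\ge m+1}$, and orthogonality $w_{\le m}X\perp \cu_{w\ge m+1}$ gives $f_1=f_2$; an analogous argument shows the third map is unique, since two candidates differ by a map factoring through $w_{\le m}X[1]\in\cu_{w\le m-1}$, and the inequality $l+1>m-1$ again gives orthogonality.

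The main obstacle is the bookkeeping in (4): extracting uniqueness of all three constituents of the morphism of triangles from the single hypothesis $m<l$ requires two separate orthogonality computations, and one must be careful in setting up the factorizations so that the correct shifted $w$-intervals appear.
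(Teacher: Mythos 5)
Your proposal gives a complete direct proof, whereas the paper dispenses with this standard material by simply citing Remark 1.1.2(1), Proposition 1.3.3(1,2,5) and Lemma 1.5.1(1,2) of \cite{bws}. Parts (1)--(3) of your argument are correct and are essentially the arguments one finds in that reference: self-duality by unwinding the axioms in $\cu^{op}$, the orthogonality characterization via a splitting/retract argument, and coproduct-closure from the orthogonality description. Your derivation of $\mathcal{P}\subset\cu_{w\le 0}$ from $\cu_{w\ge 0}=(\bigcup_{i>0}\mathcal{P}[-i])^\perp$ is also correct, and part (4) uses the standard TR3 extension plus orthogonality, which is the right approach.

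There is, however, a computational slip in the final uniqueness step of part (4). In the paper's \emph{homological} convention one has $\cu_{w\le n}=\cu_{w\le 0}[n]$, so $w_{\le m}X[1]$ lies in $\cu_{w\le m+1}$, \emph{not} $\cu_{w\le m-1}$ as you wrote. Consequently the orthogonality condition you need is $\cu_{w\le m+1}\perp\cu_{w\ge l+1}$, which holds precisely when $l+1>m+1$, i.e.\ $m<l$; your stated inequality $l+1>m-1$ is too weak and would spuriously promise uniqueness when $m=l$ (or even $m=l+1$), which is not what the proposition asserts and is in fact false in general. Under the actual hypothesis $m<l$ the conclusion you reach is correct, so the error does not break the proof of the stated claim, but the intermediate identification of the weight of $w_{\le m}X[1]$ needs to be fixed. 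It is also worth noting explicitly that you first establish uniqueness of the left map $a$ and only then of the right map $c$; the uniqueness of $c$ argument tacitly relies on both candidate morphisms of triangles sharing the same $a$, which your ordering legitimately guarantees.
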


\begin{proof}
See Remark 1.1.2(1), Proposition 1.3.3(1,2,5) and Lemma 1.5.1(1,2) of \cite{bws}.
\end{proof}

\begin{pr} \label{pcgws}
Assume that $\cu$ is a compactly generated category.
\begin{enumerate}
\item\label{pc1} Let $\mathcal{P}$ be a set of compact objects of $\cu$. Then there exists (a unique) weight structure on $\cu$ that is generated by $\mathcal{P}$, and $\mathcal{P} \subset \cu_{w\leq0}$.

\item\label{pc2} Assume that an exact functor $F: C \to C'$ respects coproducts, $w$ is a weight structure on $\cu$ that is generated by some class $\mathcal{P}\subset \obj\cu$, and $w'$ is a weight structure on $\cu'$. 
Then $F$ is left weight-exact if and only if $F(\mathcal{P}) \subset \cu_{w\leq0}$.


\end{enumerate}
\end{pr}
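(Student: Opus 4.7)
The plan for part \ref{pc1} is to take $\cu_{w\geq 0}:=(\cup_{i>0}\mathcal{P}[-i])^\perp$ as forced by the definition of ``generated by'', and then set $\cu_{w\leq 0}:={}^{\perp}\cu_{w\geq 1}$, where $\cu_{w\geq 1}=(\cup_{i\geq 0}\mathcal{P}[-i])^\perp$. Retraction-closedness of both classes is immediate from their description as orthogonals, the orthogonality axiom $\cu_{w\leq 0}\perp \cu_{w\geq 1}$ is tautological, and semi-invariance follows directly by shifting the defining orthogonality conditions. The inclusion $\mathcal{P}\subset \cu_{w\leq 0}$ reduces to checking $\mathcal{P}\perp \cu_{w\geq 1}$, which is immediate by taking $i=0$ in the definition of $\cu_{w\geq 1}$. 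Uniqueness then follows from Proposition \ref{pbw}(\ref{iort}): the class $\cu_{w\geq 0}$ is determined by the generation condition, and $\cu_{w\leq 0}$ is then determined as ${}^\perp\cu_{w\geq 1}$.

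The main obstacle is the existence of weight decompositions (axiom (iv)), where I would use a transfinite cellular construction that crucially exploits the compactness of $\mathcal{P}$. Given $M\in\obj\cu$, I would inductively build a sequence $0=M_0\to M_1\to M_2\to\ldots$ together with compatible maps to $M$, where at each stage $M_{n+1}$ is obtained from $M_n$ by attaching, via a chosen coproduct of all currently available morphisms, objects of the form $P[i]$ with $P\in\mathcal{P}$ and $i\leq 0$ that detect non-vanishing maps into $\mathrm{Cone}(M_n\to M)[-1]$. Setting $M_{\leq 0}:=\mathrm{hocolim}\,M_n$, the resulting triangle $M_{\leq 0}\to M\to M_{\geq 1}$ is the sought weight decomposition: $M_{\leq 0}\in \cu_{w\leq 0}$ since it is built from $\cup_{i\leq 0}\mathcal{P}[i]$ via extensions and coproducts (and the latter preserve $\cu_{w\leq 0}$ by Proposition \ref{pbw}(\ref{leftsm})), while compactness forces every morphism from $P[i]$ ($P\in\mathcal{P}$, $i\leq 0$) to $M_{\geq 1}$ to factor through some finite $M_n$-level cone, where it was killed by construction, so $M_{\geq 1}\in \cu_{w\geq 1}$.

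For part \ref{pc2}, the ``only if'' direction is immediate from $\mathcal{P}\subset \cu_{w\leq 0}$ established in part \ref{pc1}. For the converse, the cellular construction shows that every $X\in\cu_{w\leq 0}$ is a retract of an object $X_{\leq 0}$ built by iterated extensions and coproducts from $\cup_{i\leq 0}\mathcal{P}[i]$: applying the construction to $M=X$ produces a triangle $X_{\leq 0}\to X\to X_{\geq 1}$ in which $X\in\cu_{w\leq 0}$ and $X_{\geq 1}\in\cu_{w\geq 1}$, so orthogonality forces $X\to X_{\geq 1}$ to vanish and the triangle to split. Applying $F$ (which preserves extensions by exactness, coproducts by hypothesis, and retracts automatically) yields $F(X_{\leq 0})$ built from $F(\mathcal{P})[i]$ ($i\leq 0$) by the same operations; since $\cu'_{w'\leq 0}$ contains $F(\mathcal{P})$ by assumption and enjoys all the corresponding closure properties (semi-invariance for $[i]$-stability with $i\leq 0$, the standard long-exact-sequence argument for extension-closedness, Proposition \ref{pbw}(\ref{leftsm}) for coproducts, and axiom (i) of Definition \ref{dwstr} for retracts), we conclude $F(X)\in \cu'_{w'\leq 0}$.
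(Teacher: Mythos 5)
Your proposal is correct and follows the same cellular approach as the references the paper cites for this proposition (Theorem 5 of \cite{konk} and Proposition 1.2.3(II) of \cite{bokum}); the paper's own ``proof'' is just the citation, so you have effectively reconstructed the argument behind it.

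One step is more delicate than your phrasing suggests, namely the claim that $M_{\le 0}=\operatorname{hocolim}M_n$ belongs to $\cu_{w\le 0}$ ``since it is built from $\cup_{i\le 0}\mathcal{P}[i]$ via extensions and coproducts.'' A sequential homotopy colimit is a cone of a map $\coprod M_n\to\coprod M_n$, hence an extension of $(\coprod M_n)[1]$ by $\coprod M_n$, and $(\coprod M_n)[1]$ need only lie in $\cu_{w\le 1}$; the naive extension/coproduct-closure argument therefore does not apply. What makes the construction work is the specific shape of your tower: because the cone of each structure map $M_n\to M_{n+1}$ is an attachment object $A_n$ lying in $\cu_{w\le 0}$, for every $W\in\cu_{w\ge 1}$ the restriction map $\cu(M_{n+1},W[-1])\to\cu(M_n,W[-1])$ is surjective (its obstruction group is $\cu(A_n[-1],W[-1])$, which vanishes since $A_n[-1]\in\cu_{w\le -1}$ while $W[-1]\in\cu_{w\ge 0}$). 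This surjectivity makes the relevant endomorphism of $\prod_n\cu(M_n,W[-1])$ in the long exact sequence of the hocolim triangle surjective; combined with $\cu(\coprod M_n,W)=0$, one gets $\cu(\operatorname{hocolim}M_n,W)=0$, i.e.\ $M_{\le 0}\in\cu_{w\le 0}$. The same Mittag--Leffler-type argument transports through $F$ in part (2), using $F(A_n)[-1]\in\cu'_{w'\le -1}$. With this filled in, your proof is complete, and the compactness argument for $M_{\ge 1}\in\cu_{w\ge 1}$ and the retract/splitting argument in part (2) are fine as written.
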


\begin{proof}
See Theorem 5 of \cite{konk} (cf. also Proposition 1.2.3(II) of \cite{bokum}).
\end{proof}

\subsection{Smooth weight and $t-$structures, unramified cohomology, and the weakly birational filtration}\label{wsmooo}

\begin{defi}\label{tgwss}
For any category of the type $\mathfrak{D}^{eff}$ or $\mathfrak{D}$ 
we will write $w_{Smooth}^{eff, s}$ (resp. $w_{Smooth}^{s}$) for the weight structure defined by $\mathcal{A}_{s}$ on $\mathfrak{D}^{eff}$ (resp. on $\mathfrak{D}$); that is, we assume that the class of weight-non-negative objects 
 equals $\mathcal{A}_{s}$.

 We will write just $\wchose$ and $\wchosw$  in the case where all $s_j$ are zero. 
\end{defi}

\begin{pr}\label{becares}
 Assume that $S \in 
\obj\underline{Ht}_{hom}^{eff} \subset \mathfrak{D}^{eff}$. Then the following conditions are equivalent:\\
1. $S \in \mathfrak{D}^{eff}_{w_{Smooth}^{eff, s} \geq0}$.\\
2. Condition 2 
 of  Proposition \ref{epowbo} is fulfilled if one takes $r$ to be the minimal integer $m\ge -1$ such that  $s_{m+1} \geq -m-1$ if $m$ of this sort exists and 
  $r=+\infty$ otherwise (see Remark \ref{eot}(2)).
\end{pr}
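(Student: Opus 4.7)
The plan is to unwind the orthogonality in the definition of $\mathcal{A}_s$, use the adjunction $-\{j\}\dashv (-)_{-j}$ to re-express everything in terms of iterated Voevodsky contractions of $S$, and then reduce to a single vanishing statement covered by Proposition \ref{epowbo}.

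First, Lemma \ref{amofgw}(2) rephrases $S\in\mathcal{A}_s$ as the vanishing of $\mathfrak{D}^{eff}(\mathcal{M}(K)\langle j\rangle[t_j],S)$ for every function field $K/k$, every $j\ge 0$, and every integer $t_j<s_j$. Rewriting $\mathcal{M}(K)\langle j\rangle[t_j]=\mathcal{M}(K)\{j\}[j+t_j]$ and using the adjunction together with Corollary \ref{conspecseq}(\ref{axcor0}) (which yields $S_{-j}\in\hrt_{hom}^{eff}$), this becomes $\mathfrak{D}^{eff}(\mathcal{M}(K),S_{-j}[-j-t_j])=0$. By Proposition \ref{thshc}(5) (and its analogues for the other motivic categories considered, discussed in \S\ref{sother}) this group equals $H_{Nis}^{-j-t_j}(K,\Phi_0(S_{-j}))$. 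Since $\spe K$ is a point for the Nisnevich topology, higher Nisnevich cohomology of $K$ vanishes, so only the degree-zero case $-j-t_j=0$ contributes.

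Consequently the condition reduces to $\Phi_0(S_{-j})(K)=0$ for all function fields $K$ and all $j\ge 0$ for which $t_j=-j$ is an admissible value, i.e., for which $-j<s_j$. Conservativity of $\Phi_0$ on $\hrt_{hom}^{eff}$ (Corollary \ref{conspecseq}(\ref{axcor1})) turns this into $S_{-j}=0$ for all such $j$. Since $s$ is non-decreasing, the set of such $j$ is upward closed in $\{j\ge 0\}$; writing it as $\{j\ge j_0\}$ (or empty, which corresponds to $r=+\infty$), the identity $S_{-j}=(S_{-j_0})_{-(j-j_0)}$ shows that it suffices to check the minimal instance $S_{-j_0}=0$. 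Setting $r=j_0-1$ matches the description given in the statement, and the vanishing $S_{-(r+1)}=0$ is condition (5) of Proposition \ref{epowbo}, equivalent to condition (2) there.

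The main subtlety is the first reduction, where the adjunction with $(-)_{-j}$, the $t_{hom}$-exactness of contractions, and the Nisnevich-point nature of function fields all have to be combined cleanly; once this is done, the remainder is essentially bookkeeping on the monotone sequence $s$ together with a direct appeal to Proposition \ref{epowbo}.
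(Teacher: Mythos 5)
Your argument is essentially sound and is laid out more carefully than the paper's (very terse) proof. You reduce the orthogonality in $\mathcal{A}_s$ to stalk conditions at function fields via Lemma \ref{amofgw}, move the $\{j\}$-twists across the adjunction to Voevodsky contractions (using the $t_{hom}$-exactness of $(-)_{-1}$ from Corollary \ref{conspecseq}(\ref{axcor0}) to keep $S_{-j}$ in the heart), and then observe that for a heart object the stalk functor $\mathfrak{D}^{eff}(\mathcal{M}(K),-[i])$ is concentrated in degree $i=0$, so only $t_j=-j$ contributes; note that this last vanishing follows directly from Axiom \textbf{(A2)} (for $i>0$) and $t_{hom}$-orthogonality (for $i<0$), so the detour through Nisnevich cohomology of the point $\spe K$ is not strictly needed. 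Conservativity of $\Phi_0$, upward-closedness of $\{j\ge 0: s_j>-j\}$, and the iterated-contraction identity $S_{-j}=(S_{-j_0})_{-(j-j_0)}$ then reduce everything to a single vanishing $S_{-j_0}=0$, and Proposition \ref{epowbo} converts that into its condition (2). This is a genuinely different route from the paper's sketch (which passes to the $j=0$ case and a modified sequence), and it is the more transparent one.

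However, your closing claim that ``setting $r=j_0-1$ matches the description given in the statement'' is where you should have paused: your derivation gives the threshold $-j<s_j$, i.e., $s_{m+1}>-m-1$ (strict), whereas the proposition as printed uses $s_{m+1}\ge -m-1$. These disagree precisely when the critical $s_j$ equals $-j$. The test case $s\equiv 0$ (i.e., $\wchose$) makes the gap visible: your formula gives $j_0=1$, $r=0$, so $S\in\mathcal{A}_{s=0}\cap\hrt^{eff}_{hom}$ iff $S_{-1}=0$, which is the correct characterization (and agrees with the identification of $\mathcal{A}_{s^{0-bir}}\cap\hrt^{eff}_{hom}$ via Remark \ref{eot}(1)); the printed formula yields $r=-1$, i.e., $S_0=S=0$, which is false. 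So your computation is correct but in fact exposes an off-by-one in the statement itself (the same shift reappears in Theorem \ref{wbirssh}(2)); the honest conclusion is that the two do \emph{not} match and the proposition should read $s_{m+1}>-m-1$.
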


\begin{proof}
1 $\Rightarrow$ 2. This implication follows from the definition of $w_{Smooth}^{eff, s}$, Proposition \ref{thshc}(5) and Remark \ref{oprnci}.\\
2 $\Rightarrow$ 1. Condition (2) implies that $\mathfrak{D}^{eff}(\mathcal{M}(X), S[i]) = \ns$ for $i> r$. This implies that $S$ belongs to the class $\mathcal{A}_{s} $, where we choose the sequence $s$ as in the formulation. Namely, we set 
$s_{\geq r + 1} = - r-1$ if the first of the alternatives in (2) holds, and $(s_j) = -\infty$ otherwise (recall that we assume all our sequences to be non-decreasing). 
\end{proof}

\begin{rema}
1. Clearly, in the case $r<+\infty$ conditions 
 3--5 of  Proposition \ref{epowbo} are equivalent to our ones as well.

2. 
Actually, our principal examples are 
 $s=s^{r-bir}$ (see Proposition \ref{epowbo}) 
   and $s=0$ (that corresponds to the smooth weight structure $w_{Smooth}^{eff}$).
\end{rema}

Now we define some new $t$-structures.

\begin{defi}
\begin{enumerate}
\item We will say that a $t-$structure $t$ is {\it{right adjacent}} to $w$ whenever $\cu_{w\geq 0}=\cu_{t\geq 0}$.
\item If $H: \cu \to \au$ is contravariant functor, $\au$ is an abelian category, then we define $W^{j}(H)(X):= \imm(H(w_{\geq j}(X)) \to H(X))$ (see Remark \ref{rstws}). The map thus defined gives a canonical subfunctor of $H$ (in particular, this image does not depend on the choice of a weight decomposition of $X[j]$); see Proposition 2.1.2 of \cite{bws}.
\end{enumerate}
\end{defi}
We will omit the adjective ''right'' in this definition and simply write "adjacent", since further we will 
 need  this case of the definition only. Recall also that $\mathfrak{D}^{eff}$ satisfies the Brown representability property since it is compactly generated (see Ch. 8 
  of \cite{neebook}).

\begin{defnc}\label{codfnogf}
\begin{enumerate}
\item Since all the weight structures $w_{Smooth}^{eff, s}$ are smashing (see Definition \ref{dwso}(\ref{smash})), there exists a $t-$structure $t_{Smooth}^{eff, s}$ adjacent to it;  see Theorem 3.2.3(I) of \cite{bvtr}.
Thus we have a canonical map $\tau^{t_{Smooth}^{eff, s}}_{\geq -j}(E) \to E$ for any $E \in \mathfrak{D}^{eff}$; see Definition \ref{dtst}(iii) and Remark \ref{ronts}(1). 
Further, we can apply $H^{t^{eff}_{hom}}_{0}$ to this map, and get  a canonical morphism $H^{t^{eff}_{hom}}_{0}(\tau^{t_{Smooth}^{eff, s}}_{\geq -j}(E) \to E)$. 
As above, we will write just $t^{eff}_{Smooth}$ in the case $s_{j}=0$ (for ${j \ge 0}$).
\item 
If we apply the previous construction to $E \in \obj\underline{Ht}^{eff}_{hom}$ we get maps from $\underline{Ht}^{eff}_{hom}\cap \mathfrak{D}^{eff}_{t_{\smo} \geq -j}$ to $\underline{Ht}^{eff}_{hom}$. They give rise to the following filtration: 
 $F^{j, s}(E)= \imm_{\underline{Ht}^{eff}_{hom}} (H^{t_{hom}}_{0}(\tau^{t_{Smooth}^{eff, s}}_{\geq -j}(E)) \to H^{t_{hom}}_{0}(E)=E)$.
\end{enumerate}
\end{defnc}

Now we will study in more detail our filtration for the $t-$structure, generated by $\mathcal{A}_{s}$. 

\begin{theo}\label{wbirssh}
Let $-1\le i\le +\infty$, $j\in \z$.
\begin{enumerate} 
\item The category $Ht^{i-bir, s}$ of $i-$birational objects is a Serre abelian subcategory of $\underline{Ht}^{eff}_{hom}$, and we denote by $j_{i}$ this inclusion.

Fix $E \in\obj \underline{Ht}^{eff}_{hom}$.

\item 
The object $F^{j, s}(E)$ is the maximal $r$-birational subobject of $E$ in ${\hrt}^{eff}_{hom}$, where we take $r$ to be the minimal $m\ge -1$ such that  $s_{m+1} \geq -j$ if $m$ of this sort exists and  $r=+\infty$ otherwise (see Remark \ref{eot}(2)).

Thus, $F^{j, s}$ is  right adjoint to the inclusion $j_{r}$.

\item The {\it higher unramified part} functor $R_{nr, i}$ (see \S\ref{wocvs}; cf. also \S7.1 of \cite{kabir}) gives the 
$t-$truncation $\tau^{t_{Smooth}^{eff, s^{i-bir}}}_{\geq 0}.$ 
\item Similarly, $w_{Smooth}^{eff, s^{i-bir}}$-truncations 
give the corresponding slices (see the beginning of \S\ref{wocvs}).
\item $E$ belongs to $\underline{Ht}^{eff}_{\smo}$  if and only if it is birational (see Remark \ref{eot}(1)).

\end{enumerate}
\end{theo}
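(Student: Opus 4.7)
The plan is to address each of the five parts by combining (a) the universal property of the adjacent $t$-structure $t_{Smooth}^{eff, s}$ and (b) the identifications of $\mathcal{A}_s$-membership with birationality coming from Propositions \ref{epowbo} and \ref{becares}.

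For Part 1, Proposition \ref{epowbo} characterizes an object $E \in \hrt^{eff}_{hom}$ as $i$-birational precisely when $E_{-i-1} = 0$. By Corollary \ref{conspecseq}(\ref{axcor0}) the contraction $(-)_{-1}$ is $t_{hom}$-exact, hence so is its $(i+1)$-fold iterate $(-)_{-i-1}$; the induced functor on $\hrt^{eff}_{hom}$ is therefore exact, and its kernel is automatically a Serre abelian subcategory.

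Part 2 is the technical core. I would start from the counit morphism $\tau^{t_{Smooth}^{eff, s}}_{\geq -j}(E) \to E$, whose source lies in $\mathfrak{D}^{eff}_{t_{Smooth}^{eff, s} \geq -j} = \mathcal{A}_s[-j]$. Proposition \ref{wcatheq} applied to the shifted aisle $\mathcal{A}_s[-j]$ then places $H^{t_{hom}}_0(\tau_{\geq -j}(E))$ in $\mathcal{A}_s[-j]\cap \hrt^{eff}_{hom}$, and reading off the resulting condition through Proposition \ref{becares} identifies this intersection with the class of $r$-birational objects for the stated $r$. Part 1 then ensures that $F^{j, s}(E)$, being a quotient of this object inside the Serre subcategory, remains $r$-birational. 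For maximality, any $r$-birational subobject $S\hookrightarrow E$ in $\hrt^{eff}_{hom}$ lies in $\mathcal{A}_s[-j]$ by the same characterization, so the universal property of $\tau_{\geq -j}$ yields a factorization $S\to \tau_{\geq -j}(E)\to E$; applying $H^{t_{hom}}_0$ produces a factorization in $\hrt^{eff}_{hom}$, placing $S$ inside $F^{j, s}(E)$ and simultaneously realizing $F^{j, s}$ as right adjoint to the inclusion $j_r$.

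Parts 3, 4, and 5 follow more directly. For Part 3, Remark \ref{eot}(1) gives $\mathfrak{D}^{i-bir} = \mathcal{A}_{s^{i-bir}} = \mathfrak{D}^{eff}_{t_{Smooth}^{eff, s^{i-bir}}\geq 0}$, so both $R_{nr, i}$ and $\tau^{t_{Smooth}^{eff, s^{i-bir}}}_{\geq 0}$ are right adjoint to the embedding $\mathfrak{D}^{i-bir}\hookrightarrow \mathfrak{D}^{eff}$ and hence canonically isomorphic. For Part 4, I would match the slice triangle \eqref{sls} with the weight decomposition triangle for $w_{Smooth}^{eff, s^{i-bir}}$: the term $\nu^{\geq i}(E)\in \mathfrak{D}^{eff}\{i\}$ is weight-negative, $i^{(i-1)}p_{i-1}(E)\in \mathfrak{D}^{(i-1)-bir}$ is weight-nonnegative, and the required orthogonality between these classes is built into the definition of $w_{Smooth}^{eff, s^{i-bir}}$; uniqueness (up to the standard ambiguity of Remark \ref{rstws}) of weight decompositions finishes the argument. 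For Part 5, $\hrt^{eff}_{\smo}$ equals $\mathcal{A}_0\cap \mathfrak{D}^{eff}_{t_{Smooth}^{eff}\leq 0}$; combining Axiom \textbf{(A2)} with the contraction adjunction shows that any $E\in \hrt^{eff}_{hom}$ automatically belongs to $\mathfrak{D}^{eff}_{t_{Smooth}^{eff}\leq 0}$, so membership in $\hrt^{eff}_{\smo}$ reduces to $E\in \mathcal{A}_0$, which a direct computation using Corollary \ref{conspecseq}(\ref{axcor1}) and Proposition \ref{epowbo} (condition 5 for $r=0$) shows is equivalent to $E$ being birational.

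The main obstacle will be the bookkeeping in Part 2: matching the shifted aisle $\mathcal{A}_s[-j]$ to the correct $r$-birational class via Proposition \ref{becares}, and verifying that the image in $\hrt^{eff}_{hom}$ is genuinely the maximal $r$-birational subobject rather than merely one member of the class.
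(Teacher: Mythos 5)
Your proposal takes essentially the same route as the paper for all five parts: exactness of the stalk/contraction functors to get the Serre subcategory in (1), the adjunction attached to $\tau^{t_{Smooth}^{eff,s}}_{\geq -j}$ combined with Propositions \ref{wcatheq} and \ref{becares} for the maximality argument in (2), identifying two right adjoints of the same inclusion in (3), matching the slice triangle with a weight decomposition in (4), and reducing (5) to $E\in\mathcal{A}_0$.

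Two small remarks. In Part 1 you use Proposition \ref{epowbo}(5) (vanishing of the contraction) and the $t_{hom}$-exactness of $(-)_{-1}$, whereas the paper invokes Proposition \ref{epowbo}(4) and the exactness of the stalk functors from Corollary \ref{conspecseq}(\ref{axcor1}); both are legitimate and closely related. In Part 4 there is an off-by-one slip: for $w_{Smooth}^{eff, s^{i-bir}}$ one has $\cu_{w\le 0}=\mathfrak{D}^{eff}\{i+1\}$ and $\cu_{w\ge 0}=\mathfrak{D}^{i-bir}$, so the weight decomposition matches the slice triangle \eqref{sls} with $n=i+1$, not $n=i$. The triple you wrote, $\nu^{\geq i}(E)\in \mathfrak{D}^{eff}\{i\}$ and $i^{(i-1)}p_{i-1}(E)\in \mathfrak{D}^{(i-1)-bir}$, is the weight decomposition for $w_{Smooth}^{eff, s^{(i-1)-bir}}$ instead. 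The argument pattern is otherwise correct and is exactly what the paper does; just shift the slice index up by one.
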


\begin{proof}
1. Suppose that $0 \to S' \to S \to S'' \to 0$ is an exact sequence in $\hrt^{eff}_{hom}$. 
Since the stalks as in condition (4) of Proposition \ref{epowbo} give exact functors $\hrt^{eff}_{hom}\to \ab$ (see our Corollary \ref{conspecseq}(\ref{axcor1})), 
$S$ belongs to $\hrt^{i-bir, s}$ if and only if $S'$ and $ S''$ do. 
 
2. The sheaf $F^{j, s}(E)$ is $r-$birational by Propositions \ref{becares} , \ref{wcatheq} and assertion (1) above. Let $\widetilde E \stackrel{f}{\hookrightarrow} E$ be  some $r$-birational subobject of $E$.   
Since $\widetilde E\in \mathfrak{D}^{eff}_{t_{Smooth \geq -j}^{eff, s}}$,
the adjunction corresponding to $\tau^{t_{Smooth}^{eff, s}}_{\geq -j}$  (see Remark \ref{ronts}(1)) yields that  $f$ factors through $\tau^{t_{Smooth}^{eff, s}}_{\geq -j}(E)$.  Applying $H_{0}^{\hom}$ to the correspondiing commutative triangle we obtain that $f$ also factors through $F^{j, s}(E)$.

3. Our assertion follows from the adjunctions $\mathfrak{D}^{eff}_{t_{Smooth}^{eff, s^{i-bir}} \geq 0} \rightleftarrows \mathfrak{D}^{eff}$ and $i^{(i)} \dashv R_{nr, i}$.

4. Note that our weight structure is also a $t-$structure (by definition). Thus, our statement is an obvious consequence of the definition of slices and corresponding adjunctions (cf. assertion (3) above and \S\ref{wocvs}).

5. Obvious; note  that $\mathfrak{D}^{eff}_{{t}_{hom} \leq 0} \subset \mathfrak{D}^{eff}_{{t}_{\smo} \leq 0}$.
\end{proof}

\begin{rema}
1. In particular, for $E \in \obj \hrt^{eff}_{hom}$ we have $ F^{j}(E) = E$ if and only if $E$ is $j-$birational.

2. It would be interesting to relate our filtration with the filtration from Theorem 15 of $\cite{{bac}}$ (in the case $\mathfrak{D}^{eff}=SH(k)^{eff}$).  

3. 
 For the weight structure as in assertion (4) above 
  $W^{*}(\mathfrak{D}^{eff}(-, R(n)[m]))$ gives the filtration from Definition 5.2.1 and Corollary 5.3.3 of \cite{pel1}. Note that for $m=2n$ 
and $\mathfrak{D}^{eff}=DM^{eff}$ this is a candidate for a Bloch-Beilinson-Murre filtration (see Theorem 6.1.4, Conjecture 6.1.7, Proposition 6.1.8 and Rem. 6.1.9 of ibid.).
\end{rema}

Next we apply our results to the study of unramified cohomology.

\begin{coro}\label{spobc}
1. If $S \in \obj \hrt^{eff}_{hom}$ then 
the 
 $\hrt^{eff}_{hom}$-monomorphism  $c(S): F^{0}(S) \to S$ from Construction-Definition \ref{codfnogf} gives the {\it unramified part} of $\mathfrak{D}^{eff}_{R}(-, S)$ in the sense of \cite[Definition 7.2.1]{kabir}.

2. 
 Let $M \in \underline{Hw}_{Smooth^{eff}_{R}}$ and for some $X\in\sv$ and $f: \mathcal{M}_{R}(X) \to M$ assume that $p_{0}(f)$ is an isomorphism.
Then 
$\mathfrak{D}^{eff}_{R}(f,S)$ is 
 monomorphic, and its image yields the unramified part of $\mathfrak{D}^{eff}_{R}(\mathcal{M}_{R}(X), S)$.

\end{coro}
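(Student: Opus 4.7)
The plan is to prove (1) first and then to use it to derive (2).

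For (1), Theorem~\ref{wbirssh}(2) realizes $c(S) \colon F^{0}(S) \hookrightarrow S$ as the inclusion of the maximal $0$-birational subobject of $S$ in $\hrt^{eff}_{hom}$, and by Proposition~\ref{epowbo} the $0$-birationality of a sheaf $S'$ in $\hrt^{eff}_{hom}$ is equivalent to $S'_{-1} = 0$, equivalently to the vanishing of its Nisnevich cohomology in positive degrees. The unramified part of $\mathfrak{D}^{eff}_{R}(-, S)$ in the sense of \cite[Definition~7.2.1]{kabir} is characterized by the same universal property, since Gersten-style unramified subsheaves of a homotopy sheaf are exactly those killed by the Voevodsky contraction $(-)_{-1}$. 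Hence $c(S)$ realizes the unramified-part inclusion.

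For (2), I set $C := \operatorname{Cone}(f)$; since $p_{0}(f)$ is invertible and $\operatorname{Ker}(p_{0})$ equals the localizing subcategory $\mathfrak{D}^{eff}\{1\}$, the object $C$ lies in $\mathfrak{D}^{eff}\{1\}$. I apply $\mathfrak{D}^{eff}_{R}(-, F^{0}(S))$ to the triangle $\mathcal{M}_{R}(X) \xrightarrow{f} M \to C \to \mathcal{M}_{R}(X)[1]$. Using the adjunction $-\{1\} \dashv (-)_{-1}$ of Definition~\ref{thmdf}(5) together with $F^{0}(S)_{-1} = 0$ (from (1) and Proposition~\ref{epowbo}(5)), one obtains $\mathfrak{D}^{eff}_{R}(D\{1\}, F^{0}(S)) = 0$ for every $D \in \mathfrak{D}^{eff}$; a standard compact-generator argument extends this to the vanishing $\mathfrak{D}^{eff}_{R}(C[i], F^{0}(S)) = 0$ for all $i \in \z$. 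The resulting long exact sequence gives
\[
 \mathfrak{D}^{eff}_{R}(f, F^{0}(S)) \colon \mathfrak{D}^{eff}_{R}(M, F^{0}(S)) \xrightarrow{\sim} \mathfrak{D}^{eff}_{R}(\mathcal{M}_{R}(X), F^{0}(S)) = F^{0}(S)(X),
\]
which by (1) equals $S^{nr}(X) \hookrightarrow S(X)$. The commutative square involving $c(S)_{*}$ then shows that the image of $\mathfrak{D}^{eff}_{R}(f, S)$ contains $S^{nr}(X)$.

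The principal obstacle is to upgrade this to: $c(S)_{*} \colon \mathfrak{D}^{eff}_{R}(M, F^{0}(S)) \to \mathfrak{D}^{eff}_{R}(M, S)$ is an isomorphism; once this is known, the image of $\mathfrak{D}^{eff}_{R}(f, S)$ will equal exactly $S^{nr}(X)$ and its injectivity will be automatic. Here the hypothesis $M \in \hw_{Smooth^{eff}_{R}} \subset \mathcal{A}_{0} = \mathfrak{D}^{eff}_{t^{eff}_{Smooth} \geq 0}$ is crucial: the adjunction for the $t$-structure $t^{eff}_{Smooth}$, combined with Theorem~\ref{wbirssh}(3) (identifying $R_{nr, 0}$ with $\tau^{t^{eff}_{Smooth}}_{\geq 0}$) and the adjunction $p_{0} \dashv i^{(0)}$, yields $\mathfrak{D}^{eff}_{R}(M, S) \cong \mathfrak{D}^{0-bir}(p_{0} M, R_{nr, 0}(S))$, with the analogous identification when $S$ is replaced by $F^{0}(S)$. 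I would then invoke the Serre property of $\hrt^{0-bir}$ (Theorem~\ref{wbirssh}(1)) together with the maximality $F^{0}(S/F^{0}(S)) = 0$ to reduce the isomorphism $c(S)_{*}$ to the vanishing of $\mathfrak{D}^{eff}_{R}(M, (S/F^{0}(S))[i])$ for $i \in \{-1, 0\}$, which is the final technical verification.
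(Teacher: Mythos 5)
Your proof of part (1) is essentially the same as the paper's (which just cites Kahn--Sujatha and Asok for the universal characterization of the unramified part). For part (2), however, your route is genuinely different from the paper's, and it is worth comparing.

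The paper applies $\mathfrak{D}^{eff}_{R}(-,S)$ to the slice filtration triangle $\nu^{\geq 1}(M)\to M\to i^{(0)}p_{0}(M)\to\nu^{\geq 1}(M)[1]$ for $M$, uses the chain of adjunctions $\mathfrak{D}^{eff}_{R}(i^{(0)}p_{0}(M),S)\cong\mathfrak{D}^{eff}_{R}(M,i^{(0)}R_{nr,0}(S))\cong\mathfrak{D}^{eff}_{R}(M,F^{0}(S))$, and then kills the other two terms via the single observation that $\nu^{\geq 1}(M)\in\mathfrak{D}^{eff}_{t_{hom}\geq 1}$ (by \cite[Lemma 2.2.4(2)]{bokum} and \cite[Lemma 6.1(2)]{by}), which is orthogonal to $S\in\underline{Ht}^{eff}_{hom}$. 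You instead apply $\mathfrak{D}^{eff}_{R}(-,F^{0}(S))$ to the cone triangle of $f$ and then try to prove $c(S)_{*}$ is an isomorphism directly, reducing to the vanishing of $\mathfrak{D}^{eff}_{R}(M,(S/F^{0}(S))[i])$ for $i\in\{-1,0\}$. The case $i=-1$ is immediate from Corollary \ref{correct}(4) (giving $M\in\mathfrak{D}^{eff}_{t_{hom}\geq 0}$) and $t_{hom}$-orthogonality, but you leave the case $i=0$ as a ``final technical verification,'' and the route you indicate (``Serre property plus maximality'') does not by itself close it: the vanishing $F^{0}(S/F^{0}(S))=0$ concerns an image taken after applying $H^{t_{hom}}_{0}$, and does not formally imply the vanishing of the Hom group. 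The missing lemma is: for $S''\in\obj\underline{Ht}^{eff}_{hom}$, $F^{0}(S'')=0$ forces $\tau^{t^{eff}_{Smooth}}_{\geq 0}(S'')=0$. This does hold: writing $T=\tau^{t^{eff}_{Smooth}}_{\geq 0}(S'')$, one has $T\in\mathfrak{D}^{eff}_{t^{eff}_{Smooth}\geq 0}=\mathfrak{D}^{eff}_{w^{eff}_{Smooth}\geq 0}\subset\mathfrak{D}^{eff}_{t_{hom}\geq 0}$, so the counit $T\to S''$ factors as $T\to H^{t_{hom}}_{0}(T)\to S''$ and hence through $F^{0}(S'')=\imm(H^{t_{hom}}_{0}(T)\to S'')$; if $F^{0}(S'')=0$ then this counit vanishes, and by the $t^{eff}_{Smooth}$-adjunction $\mathfrak{D}^{eff}(T,T)\cong\mathfrak{D}^{eff}(T,S'')$, so $\id_{T}=0$. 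With that lemma your argument is complete, and you also correctly note that injectivity of $\mathfrak{D}^{eff}_{R}(f,S)$ and the identification of its image both follow. So your approach works, but it requires filling in this extra step; the paper's passage through the slice triangle for $M$ avoids the object $S/F^{0}(S)$ altogether and settles the isomorphism $c(S)_{*}$ in a single orthogonality observation.
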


\begin{proof}

1. 
 See Proposition 2.6.3 and Theorem 7.3.1 of \cite{kabir}, and \cite[Lemma 4.2]{aso}  (note that the proofs given there are applicable to any of the motivic categories from \S\ref{vgabc})

2. We argue similarly to \cite[Theorem 2.2.3]{bcpe}. Consider the following commutative diagram

\[
\begin{tikzcd}[row sep=huge, column sep=huge]
\mathfrak{D}^{eff}_{R}(M, F^{0}(S)) \arrow{r}{\cong} \arrow[swap]{d}{c(S)_{*}} & \mathfrak{D}^{eff}_{R}(\mathcal{M}_{R}(X), F^{0}(S)) \arrow{d} \\
\mathfrak{D}^{eff}_{R}(M, S)  \arrow{r} &\mathfrak{D}^{eff}_{R}(\mathcal{M}_{R}(X), S)
\end{tikzcd}
\]

Firstly, by adjunctions and Theorem \ref{wbirssh} we have $\mathfrak{D}^{eff}_{R}(i^{(0)}p_{0}(M), S) \cong \mathfrak{D}^{eff}_{R}(M, i^{(0)}R_{nr, 0}(S)) \cong \mathfrak{D}^{eff}_{R}(M, F^{0}(S))$.
Now, let us look at the following exact sequence coming from the slice filtration triangle (\ref{sls}): 
$$0 \to \mathfrak{D}^{eff}_{R}(M, F^{0}(S)) \to\mathfrak{D}^{eff}_{R}(M, S) \to \mathfrak{D}^{eff}_{R}(\nu^{\geq 1}(M), S)$$
It remains to prove that the map $c(S)_{*}$ is epimorphic. This fact follows from the sequence above along with the orthogonality axiom for $t-$structures,  since 
 $\nu^{\geq 1}(M) \in \mathfrak{D}^{eff}_{t_{hom} \geq 1}$ (see Lemma 2.2.4(2) of \cite{bokum} and \cite[Lemma 6.1(2)]{by}).
\end{proof}

\begin{pr}\label{vopraadjw}

1. Let $E, X \in \mathfrak{D}^{eff}$. Then $W^{i}(\mathfrak{D}^{eff}(-, E))(X) \cong \imm(\mathfrak{D}^{eff}(X, \tau^{t_{Smooth}^{eff, s}}_{\geq -i}(E)) \to \mathfrak{D}^{eff}(X, E))$.

2. For all $i,j$ and $X,Y \in \mathfrak{D}^{eff}$ we have a functorial isomorphism
\scriptsize $${\mathfrak{D}^{eff}(X, \tau^{t_{Smooth}^{eff, s}}_{\geq -i}(Y)[j+i]) \cong \imm(\mathfrak{D}^{eff}(w_{Smooth \geq j}^{eff, s}(X), Y[i]) \to \mathfrak{D}^{eff}(w_{Smooth \geq j-1}^{eff, s}(X), Y[i+1])).}$$\normalsize

3. $X \in \mathfrak{D}_{w_{Smooth}^{s}=i} \Longrightarrow \forall Y \in \mathfrak{D}$ we have $\mathfrak{D}(X,Y) \cong \mathfrak{D}(X,H_{i}^{t_{\smo}^{s}}(Y))$.

4. $\mathfrak{D}^{+}_{\wchosw_{\geq 0}}=\obj(\mathfrak{D}^{+})\bigcap ^{\perp}(\bigcup_{i<0}\mathfrak{D}_{t_{\smo}=i})$ (see Definition \ref{dwso}(\ref{lrbo}) for the first piece of this notation).

5. If $C \in \mathfrak{D}_{{t}_{hom} \leq i}$, $U \in \sv,\ i \geq 0$ then 
 $$\mathfrak{D}(\mathcal{M}(U),\tau^{t_{\smo}}_{\geq -i-1}(C[-i-1])) \cong \mathfrak{D}(\wchosw_{\geq -i}\mathcal{M}(U), C).$$ 
\end{pr}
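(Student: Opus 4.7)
The approach rests on the adjacency $\mathfrak{D}_{w_{Smooth}^{s} \geq 0} = \mathfrak{D}_{t_{Smooth}^{s} \geq 0}$ (Construction-Definition \ref{codfnogf}) combined with the orthogonality axioms $\mathfrak{D}_{w \leq 0} \perp \mathfrak{D}_{w \geq 1}$ and $\mathfrak{D}_{t \geq 0} \perp \mathfrak{D}_{t \leq -1}$, which together force a tight duality between weight truncations of the source and $t$-truncations of the target. All five assertions come from variants of this duality, so I would prove (1) and (2) first as the technical core and then derive (3)--(5) from them.

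For Part (1), I would apply $\mathfrak{D}(-, E)$ to the weight triangle $w_{\leq i-1}X \to X \to w_{\geq i}X$ and $\mathfrak{D}(X, -)$ to the $t$-truncation triangle $\tau^{t_{\smo}}_{\geq -i}E \to E \to \tau^{t_{\smo}}_{\leq -i-1}E$; each long exact sequence identifies the relevant image as the kernel of an outgoing map from $\mathfrak{D}(X, E)$, and these two outgoing maps are matched via the adjunction $\tau^{t_{\smo}}_{\leq -i-1} \dashv (\textrm{inclusion})$ combined with the orthogonality of the adjacent structures. Part (2) is the main technical assertion and is an incarnation of the general adjacent weight/$t$-structure duality of \cite{bws}: the plan is to build a Verdier quadrangle combining the weight decompositions of $X$ at levels $j$ and $j-1$ (with cone term $w_{=j-1}X \in \mathfrak{D}_{w=j-1}$) with the $t$-truncation of $Y[i']$, and to read the claimed isomorphism off from a connecting morphism in the resulting long exact sequences --- this connecting morphism is exactly what produces the shift $Y[i'] \to Y[i'+1]$ on the right-hand side.

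Part (3) is then a direct consequence: $X \in \mathfrak{D}_{w_{\smo} = i}$ lies in $\mathfrak{D}_{t_{\smo} \geq i}$ by adjacency and in ${}^{\perp} \mathfrak{D}_{t_{\smo} \geq i+1}$ by the weight orthogonality combined with adjacency, so applying $\mathfrak{D}(X, -)$ to the triangle $\tau^{t_{\smo}}_{\geq i+1}Y \to \tau^{t_{\smo}}_{\geq i}Y \to H_{i}^{t_{\smo}}(Y)[i]$ collapses both flanking terms. For Part (4), the inclusion $\subseteq$ is immediate from adjacency (noting $\mathfrak{D}_{t = -i} \subset \mathfrak{D}_{t \leq -1}$ for $i > 0$), while the converse uses Part (3) together with the weight Postnikov tower, which converges for bounded-below objects, to inductively rule out weight-negative parts. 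Finally, Part (5) specialises Part (2) with $X = \mathcal{M}(U)$, $Y = C$, $j = -i-1$ and $i' = 0$, noting that $\tau^{t_{\smo}}_{\geq -i-1}(C)[-i-1] = \tau^{t_{\smo}}_{\geq -i-1}(C[-i-1])$, so the left-hand side becomes $\imm(\mathfrak{D}(\wchosw_{\geq -i-1}\mathcal{M}(U), C) \to \mathfrak{D}(\wchosw_{\geq -i-2}\mathcal{M}(U), C[1]))$; the octahedral triangle $\wchosw_{=-i-1}\mathcal{M}(U) \to \wchosw_{\geq -i-1}\mathcal{M}(U) \to \wchosw_{\geq -i}\mathcal{M}(U)$ then identifies this image with $\mathfrak{D}(\wchosw_{\geq -i}\mathcal{M}(U), C)$ provided certain auxiliary Hom groups $\mathfrak{D}(\wchosw_{=-i-1}\mathcal{M}(U), C[\ast])$ vanish. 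The main obstacle I anticipate is this last vanishing, which must be extracted from $C \in \mathfrak{D}_{t_{hom} \leq i}$ together with $i \geq 0$ by matching the $t_{hom}$-degrees of the generators $\mathcal{M}(\sv)\lan j \ra[t_j]$ appearing in Definition \ref{naortcon} to the cohomological range in which $C$ is forced to vanish.
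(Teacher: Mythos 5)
The paper dispatches assertions (1)--(3) by citation — (1) and (2) are Theorem 4.4.2(6,7) of \cite{bws}, and (3) is Theorem 2.6.1(4) of \cite{bger} — so your plan to re-derive these from the adjacency $\mathfrak{D}^{eff}_{w_{\smo}\geq 0}=\mathfrak{D}^{eff}_{t_{\smo}\geq 0}$ is a genuinely different (and much heavier) route: you would in effect be reproving the cited general facts about adjacent weight and $t$-structures. Your sketch for (1) via the two long exact sequences, and for (3) via killing the two flanking terms of the $t$-truncation triangle, is sound. The sketch for (2), however, is too impressionistic to count as a proof: ``build a Verdier quadrangle \dots\ and read the claimed isomorphism off from a connecting morphism'' conceals the real content, which is to pin down the specific morphism $\mathfrak{D}^{eff}(w_{\geq j}X, Y[i]) \to \mathfrak{D}^{eff}(w_{\geq j-1}X, Y[i+1])$ produced by the weight Postnikov filtration and to verify the index bookkeeping. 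Citing is the economical choice here.

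For (4) your argument points in the same general direction as the paper's but is misstated: one does not need Part (3), nor the weight Postnikov tower of $X$. The paper fixes $Y\in\mathfrak{D}_{t_{\smo}\leq -1}$, uses $w$-boundedness below of $X$ (so $X\in\mathfrak{D}_{w\geq N}=\mathfrak{D}_{t_{\smo}\geq N}$ for some $N$) to obtain $X\perp\tau^{t_{\smo}}_{\leq N-1}Y$, and then climbs the finitely many remaining steps of the $t_{\smo}$-Postnikov tower of $Y$, whose successive cones lie in $\bigcup_{i<0}\mathfrak{D}_{t_{\smo}=i}$. For (5) your specialisation of (2) (your $a=0$, $b=-i-1$) is exactly right, and you correctly isolate the remaining task — upgrading an image to the full group $\mathfrak{D}(\wchosw_{\geq -i}\mathcal{M}(U), C)$. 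The paper does this via Proposition \ref{pbw}(\ref{icompl}) applied to $\id_{\mathcal{M}(U)}$ at $m=-i-2$, $l=-i-1$, showing that the induced map $\mathfrak{D}(\wchosw_{\geq -i}\mathcal{M}(U), C)\to\mathfrak{D}(\wchosw_{\geq -i-1}\mathcal{M}(U), C)$ is injective; this is the same orthogonality you identify (the cone of the comparison of weight truncations is a shift of a pure weight $(-i-1)$ object), so (5) matches the paper in substance.
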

\begin{proof}

1, 2. See Theorem 4.4.2(6,7) of \cite{bws}.

3. See Theorem 2.6.1(4) of \cite{bger}.

4. Obviously, the first of these classes is contained in the second one. Let $X$ be an object of $\mathfrak{D}^{+}$. To check the inverse inclusion, 
it suffices to prove that $X \perp Y$ for $X \perp (\bigcup_{i<0}\mathfrak{D}_{t_{\smo}=i})$, and for all $Y \in \mathfrak{D}_{t_{\smo}\leq -1}$.
Since $X$ is $\wchosw$-bounded below, we have $X \perp \tau^{t_{\smo}}_{\leq i}(Y)$ for some $i$. Thus, considering appropriate $t-$decompositions for $Y$ and its truncations, we obtain the  statement in question.

5. Take the weight decompositions as in  Proposition \ref{pbw}(\ref{icompl}) for $m=-i-2, l=-i-1, g=\id_{\mathcal{M}(U)}$. 
Applying the functor $\mathfrak{D}(-, C)$ one can check that the map 
$\mathfrak{D}(\wchosw_{\geq -i}\mathcal{M}(U), C) \to \mathfrak{D}(\wchosw_{\geq -i-1}\mathcal{M}(U), C)$ is injective. Now the claim follows  from Theorem 4.4.2(7) of \cite{bws} immediately.
\end{proof}

\subsection{More on $n$-birational categories and weight-exact localisations}\label{etfiloth}

In this section we prove some weak weight-exactness properties of the functor $-\lan n \ra$. As a consequence, we get the $w^{eff}_{Smooth}$ and $w_{Smooth}-$exactness of this functor, as well as of the localization functor $p_{n}$.
Note that in this paragraph, we exclude the case of $SH^{S^{1}}(k)$ (cf. Remark \ref{cgmus}(3,4)). 

\begin{pr}\label{wawwes}
Assume that $k, n\ge 0$ are fixed and  $s_{j+n}-s_{j} \leq k$ for $j\ge 0$;  take   $C \in \mathfrak{D}^{eff}_{w_{Smooth}^{eff, s}\geq 0}$ for $s=(s_{j})$. 
 Then 
   $C\lan n \ra \in \mathfrak{D}^{eff}_{w_{Smooth}^{eff, s}\geq -k}$.

\end{pr}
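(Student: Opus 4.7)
The claim unpacks to $C\langle n\rangle[k]\in\mathcal{A}_s$, and by Lemma \ref{amofgw}(2) this reduces to showing
\[
\mathfrak{D}^{eff}\bigl(\mathcal{M}(K)\langle j\rangle[t-k],\,C\langle n\rangle\bigr)=0
\]
for all function fields $K/k$, all $j\ge 0$ and all $t<s_j$. Full faithfulness of $-\langle 1\rangle$ on $\mathfrak{D}^{eff}$ --- valid in every category considered in this subsection thanks to the exclusion of $SH^{S^1}(k)$ (see Remark \ref{cgmus}(4)) --- permits cancellation of $\min(j,n)$ twists, splitting into two regimes. For $j\ge n$ the Hom becomes $\mathfrak{D}^{eff}(\mathcal{M}(K)\langle j-n\rangle[t-k],C)$; since $C\in\mathcal{A}_s$ and the hypothesis combined with $t<s_j$ gives $t-k<s_{j-n}$, the vanishing is immediate.

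The serious case is $0\le j<n$, where cancellation yields
\[
\mathfrak{D}^{eff}\bigl(\mathcal{M}(K)[t-k],\,C\langle n-j\rangle\bigr)=0,
\]
and the twist now sits on the target --- a situation $\mathcal{A}_s$ does not control directly. I would treat it by applying the coniveau spectral sequence of Axiom \textbf{(A1)} to the cohomological functor $H=\mathfrak{D}^{eff}(-,C\langle n-j\rangle)$ on some $X\in\sv$ with $k(X)=K$, then passing to the filtered colimit in $X$. Its $E_1^{p,q}$ is a coproduct over codimension-$p$ points $x$ of $\mathfrak{D}^{eff}(\mathcal{M}(x)\langle p\rangle[-p-q],C\langle n-j\rangle)$. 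For $p\ge n-j$, cancel $n-j$ twists to obtain an $\mathcal{A}_s$-vanishing statement for $C$, which holds once the total degree $p+q$ exceeds $-s_{p-(n-j)}$; monotonicity of $s$ bounds this threshold by $-s_0$. For $p<n-j$, partial cancellation rewrites the term as $\mathfrak{D}^{eff}(\mathcal{M}(x),C\langle n-j-p\rangle[p+q])$: the $j=0$ instance of the $\mathcal{A}_s$-orthogonality forces $\mathfrak{D}^{eff}(\mathcal{M}(Y)[t],C)=0$ for all $t<s_0$ and all $Y\in\sv$, which by Axiom \textbf{(A2)} is precisely $C\in\mathfrak{D}^{eff}_{t_{hom}\ge s_0}$; and $-\langle n-j-p\rangle$ is $t_{hom}$-exact (its right adjoint $-_{-(n-j-p)}$ being so by Corollary \ref{conspecseq}(\ref{axcor0}), and a standard adjunction argument propagates $t$-exactness to the left adjoint), whence $C\langle n-j-p\rangle\in\mathfrak{D}^{eff}_{t_{hom}\ge s_0}$, so Axiom \textbf{(A2)} makes the Hom vanish once $p+q>-s_0$. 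The two regimes share the threshold $p+q>-s_0$, so the abutment $\mathfrak{D}^{eff}(\mathcal{M}(X),C\langle n-j\rangle[r])$ vanishes for $r>-s_0$. Setting $r=k-t$: the $i=0$ instance of the hypothesis gives $s_n\le s_0+k$, and monotonicity with $j<n$ yields $s_j\le s_n\le s_0+k$, so $t<s_j$ indeed forces $r=k-t>-s_0$, as required.

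The main obstacle is thus the case $0\le j<n$: cancellation leaves the twist on the target, so $\mathcal{A}_s$ alone does not control the Hom. The coniveau filtration rescues matters by stratifying the source into codimension pieces on which either full cancellation reduces to $\mathcal{A}_s$-orthogonality (high codimension) or partial cancellation combined with Axiom \textbf{(A2)}-type $t_{hom}$-positivity applies (low codimension); the hypothesis $s_n-s_0\le k$ is precisely what synchronizes the two resulting vanishing ranges.
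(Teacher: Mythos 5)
Your proof is correct in substance and proceeds along the same basic lines as the paper's --- both arguments rest on the coniveau spectral sequence of Axiom \textbf{(A1)} together with cancellation of Tate twists --- but your organization is noticeably more careful. The paper writes a single coniveau spectral sequence for the twisted target, disposes of the codimension-zero line by \textbf{(A2)} and of the higher-codimension $E_1$-terms by a uniform cancellation to an expression of the shape $\mathfrak{D}^{eff}(\mathcal{M}(K)\langle j+p-n\rangle[\cdot],C)$; it does not pause over the regime in which $j+p-n<0$ (where the cancelled twist would be negative and the displayed Hom-group does not literally live in $\mathfrak{D}^{eff}$), leaving the reader to supply an interpretation. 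Your split into $j\ge n$ (pure cancellation, no spectral sequence needed) and $0\le j<n$ (spectral sequence with a further stratification of $E_1$ by codimension $p$ against the residual twist $n-j$) makes that delicate regime explicit; in particular you isolate precisely where one must fall back from $\mathcal{A}_s$-orthogonality to the weaker statement $\mathcal{A}_s\subset\mathfrak{D}^{eff}_{t_{hom}\ge s_0}$ coupled with Axiom \textbf{(A2)}, and you show that the hypothesis $s_n-s_0\le k$ is exactly what reconciles the two vanishing ranges. This buys a cleaner and more verifiable argument at the cost of some length; the paper buys brevity at the cost of glossing over the low-codimension, small-$j$ terms.

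One small overclaim: you assert that $-\langle m\rangle$ is $t_{hom}$-exact on the grounds that its right adjoint $-_{-m}$ (up to shift) is $t_{hom}$-exact. Full $t$-exactness of a right adjoint yields only \emph{right} $t$-exactness of the left adjoint (the orthogonality axiom for $t$-structures runs in one direction only), so you have established right $t_{hom}$-exactness of $-\langle m\rangle$ and not more. Since at that point in the argument you use nothing beyond preservation of $\mathfrak{D}^{eff}_{t_{hom}\ge s_0}$, i.e.\ right $t_{hom}$-exactness, the proof is unaffected; but the parenthetical justification should be weakened accordingly.
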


\begin{proof}
 Let $C\in \mathfrak{D}^{eff}_{w_{Smooth}^{eff, s}\geq 0}$, and consider the following 
   functors from the category $\sv$ of smooth $k$-varieties: $H^{q}_{p}(X):=\mathfrak{D}^{eff}(\mathcal{M}(X)\{p\}[-q], C\lan n \ra[-k])$ for
each $p \geq 0, q \in \mathbb{Z}, X \in \sv$. We should verify that $H^{p+q}_{j}(X) = \ns$ for all $q >0,\ X \in \sv$. Let us write the coniveau spectral sequence as in 
 \textbf{(A1)}:
$$E_{1}^{p,q}=\underset{x\in X^{(p)}}{\coprod}H^{q}_{j+p}(x) \Rightarrow H^{p+q}_{j}(\mathcal{M}(X)).$$
Then $E_{1}^{0,q}=\ns$ for $q \geq 0$ by \textbf{(A2)}. 
Further, for a function field $K/k$ and $p >0$ it is easily seen that  $H^{q}_{p+j}(K)= \mathfrak{D}^{eff}(\mathcal{M}(K)\lan j+p-n \ra[k-j-p-q], C)= \ns$ 
 (see Definition \ref{thmdf}(4) for this notation; cf. also the proof of Proposition \ref{amofgw}). 
It obviously follows that $H^{p+q}_{j}(X) = \ns$ if $p+q >0$, and that is what we need.

\end{proof}

\begin{rema}
Clearly, a similar property holds in the non-effective case (cf. Definition \ref{naortcon}).
\end{rema}

\begin{coro}\label{correct}
1. The endofunctors $-\lan n \ra$ are $w_{Smooth}^{eff}-$exact (resp. $w_{Smooth}-$exact) for $n \geq 0$ (resp. $n\in \mathbb{Z}$).

2. The embedding $i: \mathfrak{D}^{eff} \hookrightarrow \mathfrak{D}$ is weight-exact with respect to $\wchose$ and $w_{Smooth}$.

3. Take a sequence $(s_{j})_{j\ge 0}$ 
 such that $s_{j+n}-s_{j} \leq 1$ for all  $j\ge 0$. Then the functor $p_{n}$ is weight-exact with respect to the weight structures $w_{Smooth}^{eff, s}$ and $w^{n-bir, s}$,
where the weight structure $w^{n-bir, s}$ is defined 
 via $\mathcal{M}^{n-bir}_{R}$ similarly to Definitions \ref{tgwss} and \ref{naortcon}. 

4. $\mathfrak{D}_{\wchose \geq 0}^{eff} \subset \mathfrak{D}^{eff}_{t_{hom}^{eff}\geq 0}$.

\end{coro}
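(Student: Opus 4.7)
The plan is to unwind the definitions so that the inclusion becomes an immediate consequence of Lemma \ref{amofgw} and Axiom \textbf{(A2)}. Note that $\wchose = w_{Smooth}^{eff, s}$ with $s = (s_j) \equiv 0$, so by Definition \ref{tgwss} combined with Definition \ref{naortcon}, the weight-non-negative class $\mathfrak{D}^{eff}_{\wchose \geq 0}$ is exactly $\mathcal{A}_0$. Applying Lemma \ref{amofgw}(2), I can characterize this class via function-field stalks: $C \in \mathcal{A}_0$ iff $\mathfrak{D}^{eff}(\mathcal{M}(K)\langle j\rangle[t], C) = 0$ for every function field $K/k$, every $j \geq 0$, and every $t < 0$.

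Next I would translate between the two twist conventions used in the paper. By Remark \ref{rtate} we have $\mathcal{M}(K)\langle j \rangle = \mathcal{M}(K)\{j\}[j]$, so the stalk vanishing reads
\[
\mathfrak{D}^{eff}(\mathcal{M}(K)\{j\}[j+t], C) = 0 \quad \text{for all } K, \, j \geq 0,\, t < 0.
\]
Now for arbitrary $j \geq 0$ and $i > 0$, the choice $t := -i - j$ satisfies $t < -j \leq 0$, so the hypothesis $C \in \mathcal{A}_0$ applies at this $(j,t)$ and produces
\[
\mathfrak{D}^{eff}(\mathcal{M}(K)\{j\}, C[i]) \,=\, \mathfrak{D}^{eff}(\mathcal{M}(K)\{j\}[-i], C) \,=\, 0.
\]
This is precisely the condition appearing in the effective form of Axiom \textbf{(A2)}, hence $C \in \mathfrak{D}^{eff}_{t_{hom}^{eff} \geq 0}$, giving the desired inclusion.

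No serious obstacle is anticipated: the argument is merely a direct matching of the defining orthogonality conditions through the identity $\langle j \rangle = \{j\}[j]$. The only step that requires a moment's care is verifying that for each pair $(j,i)$ with $j \geq 0$ and $i > 0$ one can pick the shift $t = -i - j$ with $t < 0$, which is automatic. As a byproduct the proof shows that $\mathfrak{D}^{eff}_{\wchose \geq 0}$ is in fact strictly contained in $\mathfrak{D}^{eff}_{t_{hom}^{eff} \geq 0}$ in general, since $\mathcal{A}_0$ also enforces vanishings of $\mathfrak{D}^{eff}(\mathcal{M}(K)\{j\}, C[i])$ for some non-positive $i$ when $j \geq 1$.
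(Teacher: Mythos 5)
Your argument addresses only part~4 of the corollary; parts~1--3 (the $w_{Smooth}^{eff}$- and $w_{Smooth}$-exactness of the twists $-\lan n\ra$, the weight-exactness of the embedding $i:\mathfrak{D}^{eff}\hookrightarrow\mathfrak{D}$, and the weight-exactness of the localization $p_n$) are not touched. Those are the parts that actually require work: they rest on Proposition~\ref{wawwes} (part~1 via the choice $s_j\equiv 0$), on Proposition~\ref{pcgws}(\ref{pc2}) plus an extra argument for the non-effective Tate twists (part~2), and on Proposition~\ref{wawwes} again combined with Theorem~3.1.3(2) of \cite{bsnew} (part~3). A proposal that silently drops three of four assertions is incomplete.

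That said, what you write for part~4 is correct, and it is a slightly more explicit route than the paper takes. The paper dispatches part~4 with a one-line reference to Proposition~\ref{thshc}(1) (and to an external lemma), which is stated specifically for $SH(k)$. You instead unwind Definition~\ref{tgwss} to get $\mathfrak{D}^{eff}_{\wchose\ge 0}=\mathcal{A}_0$, pass through Lemma~\ref{amofgw}(2) to phrase membership in $\mathcal{A}_0$ as a function-field stalk condition, use the identity $\lan j\ra=\{j\}[j]$ from Remark~\ref{rtate}(1), and then specialize $t=-i-j$ to land exactly on the vanishing required by Axiom~\textbf{(A2)}. Since $j\ge 0$ and $i>0$ force $t<0$, the specialization is always available. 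This is self-contained, applies uniformly to every $\mathfrak{D}^{eff}$ satisfying the axioms (rather than appealing to the $SH$-specific statement), and your closing observation that the containment is generically strict (because $\mathcal{A}_0$ also kills $\mathfrak{D}^{eff}(\mathcal{M}(K)\{j\},C[i])$ for certain $i\le 0$ when $j\ge 1$) is a correct and useful aside. So for part~4: correct and a modest improvement in self-containedness; for parts~1--3: missing entirely.
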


\begin{proof}
1. Take $s_{j}=0$ for 
 $j\ge 0$ (resp. $j\in \mathbb{Z}$) in the previous proposition.

2. From Proposition \ref{pcgws}(\ref{pc2}) we obtain that $i$ is left weight-exact. To prove the right weight-exactness, we take $C\in \mathfrak{D}_{\wchose \geq 0}^{eff}$, and consider the group 
$\mathfrak{D}(\mathcal{M}_{R}(X)\lan n \ra[s], C)$, where $X \in \sv, n \in \mathbb{Z}, s<0$. Then in the case of $n \geq 0$ this group vanishes by the previous assertion. If $n < 0$, then 
$\mathfrak{D}(\mathcal{M}_{R}(X)\lan n \ra[s], C) \cong \mathfrak{D}^{eff}(\mathcal{M}_{R}(X)[s], C\lan -n \ra)$, and $C\lan -n \ra \in \mathfrak{D}_{\wchose \geq 0}^{eff}$, and this group vanishes also. Thus, we are done.

3. Take a weight decomposition $X \to C \to Y \to X[1]$ with respect to $w_{Smooth}^{eff, s}$, and twist it by $-\lan n \ra$; then we have $Y\lan n \ra \in \mathfrak{D}^{eff}_{w_{Smooth}^{eff, s} \geq 0}\cap \mathfrak{D}^{eff}\lan n \ra$ by Proposition \ref{wawwes}, 
and $X\lan n \ra \in \mathfrak{D}^{eff}_{w_{Smooth}^{eff, s} \leq 0}\cap \mathfrak{D}^{eff}\lan n \ra$ by definition. Thus our statement follows from  Theorem 3.1.3(2) of \cite{bsnew}.

4. Immediately from definition and Proposition \ref{thshc}(1) (see also Lemma 2.2.4(3) of \cite{bokum}).
\end{proof}



\section{Our constructions in motivic categories distinct from $SH$}\label{sother}

For the convenience of the reader, now we discuss our main constructions and results in the case where $\mathfrak{D}$ is distinct from $SH$. As noted earlier, these results are quite similar to those obtained in Proposition \ref{thshc}.

\subsection{
Our notions and 
 assumptions in various motivic categories}\label{sothermot}

In this section, we check our axioms and discuss the related definitions for the motivic categories 
 distinct from $SH^{eff}(k) \subset SH(k)$. 

 Firstly, recall that there exists a canonical exact monoidal connecting functor
   from $SH^{S^{1}}(k)$ into 
 each  our motivic category $\mathfrak{D}(k)$; it send $\mathcal{M}_{SH^{S^{1}}}(X)$ into the corresponding $\mathcal{M}_{\mathfrak{D}}(X)$. 
  This 
   gives the isomorphism  $\mathcal{M}_{\mathfrak{D}}(\mathbb{P}^{1}) \cong \ood \oplus T$ mentioned in the beginning of \S\ref{vgabc}. 
 Thus the spectral sequences of  axiom \textbf{(A1)} in all of these $\mathfrak{D}(k)$ can be obtained from the one for $SH^{S^{1}}(k)$.

 Now 
  let us check  axiom \textbf{(A2)} 
  for each of our categories separately; cf.  Lemma \ref{mlfjooa} above.\\ 


\fbox{The case of $SH^{S^{1}}(k)$}\\

We will write $\pi^{\mathbb{A}^{1}}_{n}(E)$ for  the Nisnevich sheaf  associated to the presheaf $X \mapsto E^{-n}_{0}(X)=SH^{S^{1}}(\mathcal{M}(X_{+})[n], E), X \in \sv, n \in \mathbb{Z}$.

\begin{pr}\label{thsss}
\begin{enumerate}
\item $E \in SH^{S^{1}}(k)_{\geq 0}$ if and only if $\pi^{\mathbb{A}^{1}}_{n}(E)=0$ for $n<0$.
\item $E \in SH^{S^{1}}(k)_{\leq 0}$ if and only if $\pi^{\mathbb{A}^{1}}_{n}(E)=0$ for $n>0$.
\item The functor $\pi^{\mathbb{A}^{1}}_{0}(E)$ gives an equivalence of $\underline{Ht}^{SH^{S^{1}}}_{hom}$ and $SHI(k)$, where $SHI(k)$ is the category of strictly homotopy invariant sheaves (see Definition 4.3.5 of \cite{Mor}).
\item \textbf{(A2)} is fulfilled.

\end{enumerate}
\end{pr}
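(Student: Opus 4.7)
For parts (1), (2), and (3), I would invoke Morel's foundational results on the homotopy $t$-structure of $SH^{S^{1}}(k)$: the characterizations of the positive and negative aisles via vanishing of the $\mathbb{A}^1$-homotopy sheaves $\pi^{\mathbb{A}^{1}}_n$ (which simultaneously identifies the $t$-structure of Definition~\ref{thmdf}(2) with Morel's), and the equivalence of the heart with $SHI(k)$ via $E \mapsto \pi^{\mathbb{A}^{1}}_0(E)$; see \cite{Mor}, Section~4.3 (in particular Theorems~4.3.4 and~4.3.8). These are direct transpositions of Morel's descriptions into the language of Proposition~\ref{cotsfc}.

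For part (4), the plan is to reduce axiom~\textbf{(A2)} to the case $j = 0$ via the adjunction $-\{j\} \dashv -_{-j}$. For the ``only if'' direction, given $E \in SH^{S^{1}}(k)_{\geq 0}$ I would compute
$$SH^{S^{1}}(\mathcal{M}(K)\{j\}, E[i]) = \varinjlim_{X,\,k(X)=K} SH^{S^{1}}(\mathcal{M}(X), (E_{-j})[i]) = \pi^{\mathbb{A}^{1}}_{-i}(E_{-j})(K),$$
where the second equality uses that the value of a presheaf on $\sv$ at a function field (interpreted as a colimit over smooth models) agrees with that of its Nisnevich sheafification at the generic point. It then suffices to show $E_{-j} \in SH^{S^{1}}(k)_{\geq 0}$, for then part~(1) forces the sheaf on the right to vanish when $i > 0$. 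This right $t$-exactness of $(-)_{-j}$ would be verified by a Postnikov-tower argument: Morel's theorem that contraction preserves strict $\mathbb{A}^1$-invariance handles the heart, and the extension to all of $SH^{S^{1}}(k)_{\geq 0}$ uses that $(-)_{-j}$ preserves coproducts, since its left adjoint sends compact generators to compact objects.

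For the ``if'' direction, setting $j = 0$ in the hypothesis gives $\pi^{\mathbb{A}^{1}}_{-i}(E)(K) = 0$ for every function field $K/k$ and $i > 0$. Since each $\pi^{\mathbb{A}^{1}}_{-i}(E)$ is a strictly $\mathbb{A}^1$-invariant Nisnevich sheaf (by~(3) applied to the appropriate shift), Morel's Gersten-type injectivity $F(X) \hookrightarrow F(K_X)$ for such sheaves $F$ (\cite{Mor}, Theorem~6.2.7) forces $\pi^{\mathbb{A}^{1}}_{-i}(E) = 0$ as Nisnevich sheaves, and (1) then yields $E \in SH^{S^{1}}(k)_{\geq 0}$. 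The principal technical obstacle is avoiding circular reference to Corollary~\ref{conspecseq}(\ref{axcor0}) for the right $t$-exactness of $(-)_{-j}$; the Postnikov argument sketched above provides the required independent verification inside Morel's framework.
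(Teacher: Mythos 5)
For parts (1)--(3) your citations to Morel's \S4.3 are correct; the paper refers to Theorem~2.3 of \cite{hoy} (by analogy) for (1)--(2) and to Lemma~4.3.7(2) of \cite{Mor} for (3), but the difference is immaterial. For part (4) the paper simply cites Lemma~6.1.6 of \cite{Mor1}, so you are reconstructing an argument rather than reproducing one, and your concern about circularity with Corollary~\ref{conspecseq}(\ref{axcor0}) is well placed. The ``if'' direction of your argument (Morel's Gersten injectivity plus part~(1)) is sound.

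The gap is in the ``only if'' direction, in the claimed Postnikov-tower proof that $(-)_{-j}$ is right $t_{hom}$-exact. You say that ``Morel's theorem\ldots handles the heart, and the extension to all of $SH^{S^{1}}(k)_{\geq 0}$ uses that $(-)_{-j}$ preserves coproducts.'' But this pattern of argument does not close up: the aisle $SH^{S^{1}}(k)_{t_{hom}\geq 0}$ is generated under coproducts, extensions and $[1]$ by the objects $\mathcal{M}(X)$ ($X\in\sv$), \emph{not} by heart objects (a suspension spectrum $\mathcal{M}(X)$ typically has unbounded homotopy sheaves). So preservation of coproducts only reduces the claim to showing $\mathcal{M}(X)_{-j}\in SH^{S^{1}}(k)_{t_{hom}\geq 0}$, which is precisely a special case of Axiom~\textbf{(A2)} and is not supplied by Morel's heart-level statement about contractions of strictly $\mathbb{A}^1$-invariant sheaves. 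If instead you run a genuine Postnikov-tower argument, the tower $\{\tau_{\leq n}E\}$ converges to $E$ as a homotopy \emph{limit}, not a colimit, so the relevant facts to check are right completeness of $t_{hom}$ and control of the $\lim^1$ term; ``preserves coproducts'' does not enter, and these points are not addressed. In other words, the irreducible content of \textbf{(A2)} --- in effect, the vanishing of $H^p_{Nis}(\mathbb{G}^{\times j}_{m,K},F)$ for $p>0$ and $F$ strictly $\mathbb{A}^1$-invariant, established via Morel's Gersten resolutions and packaged as Lemma~6.1.6 of \cite{Mor1} --- is being silently assumed rather than derived; you should either invoke that lemma directly, as the paper does, or supply the descent-spectral-sequence/Gersten computation at $\mathbb{G}^{\times j}_{m,K}$ in place of the Postnikov-tower step.
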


\begin{proof}
1,2. Similarly to Theorem 2.3 of \cite{hoy}.\\
3. See Lemma 4.3.7(2) of \cite{Mor}.\\
4. See Lemma 6.1.6 of \cite{Mor1}.
\end{proof}

\fbox{The case of $D_{\mathbb{A}^{1}}(k)$ and $D_{\mathbb{A}^{1}}^{eff}(k)$}

\begin{pr}\label{thdera}
\begin{enumerate}
\item $E \in D_{\mathbb{A}^{1}}(k)_{\geq 0}$ if and only if $\underline{H}^{\mathbb{A}^{1}}_{m,n}(E)=0$ for $(m,n) \in \mathbb{Z}\times\mathbb{Z}^{-}$.
\item $E \in D_{\mathbb{A}^{1}}(k)_{\leq 0}$ if and only if $\underline{H}^{\mathbb{A}^{1}}_{m,n}(E)=0$ for $(m,n) \in \mathbb{Z}\times\mathbb{Z}^{+}$.
\item There exist equivalences $\underline{H}^{\mathbb{A}^{1}}_{*,0}: \hrt^{D_{\mathbb{A}^{1}}}_{hom} \to HI_{*}(k)$ and $\underline{H}^{\mathbb{A}^{1}}_{0,0}: \hrt^{D_{\mathbb{A}^{1}}^{eff}} \to HI^{fr}(k)$.
\item \textbf{(A2)} is fulfilled.
\end{enumerate}
\end{pr}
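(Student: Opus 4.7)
The plan is to treat Proposition \ref{thdera} as the $D_{\mathbb{A}^{1}}$-analogue of the already-proved Propositions \ref{thshc} and \ref{thsss}; no genuinely new technique is needed, but one must assemble several inputs from the literature on the $\mathbb{A}^{1}$-derived category. For assertions (1) and (2), I would follow the scheme of Theorem 2.3 of \cite{hoy} (already invoked for $SH(k)$ and $SH^{S^{1}}(k)$). Since $D_{\mathbb{A}^{1}}(k)$ is compactly generated by the $\mathcal{M}(X)\{j\}$ for $X \in \sv$ and $j \in \mathbb{Z}$ (Remark \ref{cgmus}(1), cf.\ also \S5.3 of \cite{cidtm}), the $t$-structure of Proposition \ref{cotsfc} admits the expected pointwise description: $E \in D_{\mathbb{A}^{1}}(k)_{\geq 0}$ iff $D_{\mathbb{A}^{1}}(k)(\mathcal{M}(X)\{j\}[m],E)=0$ for all $m>0$ and $j\in\mathbb{Z}$, which sheafifies to the vanishing of $\underline{H}^{\mathbb{A}^{1}}_{m,n}(E)$ for $(m,n)\in\mathbb{Z}\times\mathbb{Z}^{-}$. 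The coconnective description (2) is then standard, obtained via Brown representability and a retraction argument on the Postnikov tower, exactly as for $SH^{S^{1}}(k)$.

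For (3), the heart identification should be read off from the comparison with $SH^{S^{1}}(k)$. Recall from the beginning of \S\ref{sothermot} that the canonical connecting functor $SH^{S^{1}}(k) \to D_{\mathbb{A}^{1}}^{eff}(k)$ sends $\mathcal{M}_{SH^{S^{1}}}(X)$ to $\mathcal{M}_{D_{\mathbb{A}^{1}}}(X)$; one checks that it is right $t_{hom}$-exact since it sends the generators of the $t$-structure to generators (Proposition \ref{pcgws}(\ref{pc2}) has a $t$-structure analogue). Combined with Proposition \ref{thsss}(3) identifying $\hrt_{hom}^{SH^{S^{1}}}$ with $SHI(k)$, and with the description of $D_{\mathbb{A}^{1}}^{eff}(k)$ as the $\mathbb{A}^{1}$-localisation of the derived category of Nisnevich sheaves (\S5.3 of \cite{cidtm}; cf.\ \S6.2 of \cite{Mor}), the heart is identified with $HI^{fr}(k)$ via Theorem 5.14 of \cite{by} in the framed setting. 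The stabilised assertion $\hrt^{D_{\mathbb{A}^{1}}}_{hom} \cong HI_{*}(k)$ then follows as in Theorem 5.2.6 of \cite{Mor}: in contrast to $SH^{S^{1}}(k)$, the twist $-\{1\}$ is fully faithful on $D_{\mathbb{A}^{1}}^{eff}(k)$ (Remark \ref{cgmus}(4)), so a stable object corresponds precisely to a sequence of sheaves linked by the Voevodsky contractions, i.e.\ to a homotopy module.

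Finally, (4) is formal once (1) is in hand: running the coniveau spectral sequence of axiom \textbf{(A1)} with coefficients in the cohomology theory $H^{q}_{p}(X):=D_{\mathbb{A}^{1}}(\mathcal{M}(X)\{p\}[-q],C)$, and using the right $t_{hom}$-exactness of $-\{p\}$ as in the proof of Corollary \ref{conspecseq}(\ref{axcor1}), propagates the vanishing of stalks at function fields to all $X \in \sv$; hence \textbf{(A2)} follows from (1). The main obstacle is (3): matching the transfer-free setup of $D_{\mathbb{A}^{1}}$ with the framed sheaf category of \cite{gap} in a way compatible with the homotopy $t$-structure is essentially the content of \cite{by} and must be invoked rather than reproved, whereas (1), (2), (4) are quite mechanical adaptations of their $SH$-counterparts.
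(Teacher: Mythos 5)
The paper's proof of Proposition~\ref{thdera} is purely citational: assertions (1)--(2) are attributed to subsection 16.2.4 of \cite{cidtm} or Corollary 2.1.72 of \cite{ayo}, assertion (3) to Example 4.1.2 of \cite{bondegl} together with Theorem 8.12 and Corollary 8.14 of \cite{ane}, and assertion (4) to Remark 8 of \cite{Mor1} (in parallel with the $SH^{S^1}$ case). Your proposal instead tries to reconstruct the arguments from scratch. For (1)--(2) and (4) the two routes are compatible in spirit: adapting Hoyois's Theorem~2.3 argument to the compactly generated category $D_{\mathbb{A}^1}(k)$, and then deriving \textbf{(A2)} from \textbf{(A1)} via the coniveau spectral sequence plus right $t_{hom}$-exactness of $-\{p\}$, is a legitimate alternative to citing Ayoub/Cisinski--D\'eglise and Morel's connectivity theorem; you trade a single external citation for a replay of the Gersten/coniveau machinery that is anyway built into \textbf{(A1)}.

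The genuine gap is in assertion (3). Your argument for $\underline{Ht}^{D_{\mathbb{A}^1}^{eff}}\cong HI^{fr}(k)$ routes through the connecting functor $SH^{S^1}(k)\to D_{\mathbb{A}^1}^{eff}(k)$ together with Theorem~5.14 of \cite{by}, but that theorem was invoked by the paper (Proposition~\ref{thshc}(4)) for the heart of $SH^{eff}(k)$, a different category; knowing that the target of a right $t$-exact functor from $SH^{S^1}(k)$ has some heart does not, by itself, endow that heart with framed transfers nor identify it with $HI^{fr}(k)$. The content here --- that the heart of $t_{hom}$ on the additive/derived $\mathbb{A}^1$-category carries (MW- or framed) transfers and that the resulting category is equivalent to $HI^{fr}(k)$ --- is precisely what Ananyevskiy--Neshitov \cite{ane} establish (Theorem 8.12, Corollary 8.14), and it cannot be read off from the $SH$-side results of \cite{by} without an additional comparison theorem. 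Likewise, your stabilised claim $\underline{Ht}^{D_{\mathbb{A}^1}}_{hom}\cong HI_*(k)$ leans on full faithfulness of $-\{1\}$ on $D_{\mathbb{A}^1}^{eff}(k)$ (a cancellation theorem, recorded in Remark~\ref{cgmus}(4) but itself nontrivial) and on an analogue of Morel's Theorem~5.2.6 that would need to be verified in the $\mathbb{A}^1$-derived setting rather than taken for granted; the paper sidesteps both issues by pointing to Example~4.1.2 of \cite{bondegl}, where exactly this heart computation is carried out.
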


\begin{proof}
1, 2. See subsection 16.2.4 of \cite{cidtm} or Corollary 2.1.72 of \cite{ayo}.\\
3. See Example 4.1.2 of \cite{bondegl}, Theorem 8.12 and Corollary 8.14 of \cite{ane}.\\
4. Similarly to the previous case; see Remark 8 of \cite{Mor1}.
\end{proof}

\fbox{The case of $DM(k)$ and $DM^{eff}(k)$}

\begin{pr}\label{thpragg}
\begin{enumerate}
\item 
 There exist equivalences $\underline{H}_{*}^{0}: \hrt^{DM} \to HI_{*}^{tr}(k)$ and $\underline{H^{0}}: \hrt^{DM^{eff}} \to HI(k)$, where $HI_{*}^{tr}(k)$ and $HI(k)$  are the categories of homotopy modules with transfers and homotopy invariant sheaves with transfers, respectively (see \cite[Definition 3.1.9]{vo} and \cite[Definition 1.3.2]{degmod}).
\item \textbf{(A2)} is fulfilled.
\end{enumerate}
\end{pr}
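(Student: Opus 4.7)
The plan is to mirror the proofs of Propositions \ref{thsss} and \ref{thdera} immediately above, by invoking the literature on motives with transfers.

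Assertion (1) collects classical equivalences. The equivalence $\underline{H^{0}}: \hrt^{DM^{eff}} \to HI(k)$ is Voevodsky's: a complex of sheaves with transfers lies in the heart of the homotopy $t$-structure precisely when its non-zero cohomology is concentrated in degree $0$ and forms a homotopy-invariant Nisnevich sheaf with transfers. Standard references include Proposition 3.1.13 of \cite{vo}, as well as the treatments in \cite{degmod} and \cite{cidtm}. The stable version $\underline{H}_{*}^{0}: \hrt^{DM} \to HI_{*}^{tr}(k)$ arises by combining the effective case with the contractions $(-)_{-n}$ of Definition \ref{thmdf}(5), which assemble each object of $\hrt^{DM}$ into a graded homotopy module with transfers; see D\'eglise's framework in \cite{degmod}, which is analogous to the non-transfer case recalled in Proposition \ref{thshc}(4).

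For assertion (2), the argument parallels that of Proposition \ref{thsss}(4). The canonical exact monoidal functor $\gamma: SH^{S^{1}}(k) \to DM^{eff}(k)$ (and its $\mathbb{G}_m$-stabilized version $SH(k) \to DM(k)$) mentioned in the opening of \S\ref{sothermot} sends $\mathcal{M}_{SH^{S^{1}}}(X)$ to $\mathcal{M}_{DM}(X)$, respects twists, and preserves compact generators. Consequently the coniveau spectral sequence of \textbf{(A1)} in $DM$ is induced from that of $SH^{S^{1}}$ via $\gamma$, with matching function-field stalks; hence the already-established version of \textbf{(A2)} for $SH^{S^{1}}$ (Proposition \ref{thsss}(4)) together with a filtered colimit computation yields \textbf{(A2)} for $DM$. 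Alternatively, combining assertion (1) with the characterization of the aisle via its homology sheaves in $HI(k)$, and invoking the classical fact that a homotopy-invariant Nisnevich sheaf with transfers is zero if and only if its stalks at all function fields vanish, deduces \textbf{(A2)} directly.

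The principal obstacle is ensuring compatibility of function-field stalks with the transfer structure: namely, that the filtered colimit $DM^{eff}(\mathcal{M}(K)\{j\}, C[i]) = \varinjlim_{X,\, k(X) = K} DM^{eff}(\mathcal{M}(X)\{j\}, C[i])$ genuinely captures the stalk at $K$ of the corresponding twisted homology sheaf of $C$. This reduces to $\gamma$ preserving filtered colimits (a consequence of its having a right adjoint and of compact generators mapping to compact generators) and to standard Gersten-type exactness results for homotopy-invariant sheaves with transfers, both well-documented in \cite{vo} and \cite{cidtm}.
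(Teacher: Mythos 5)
For assertion (1) your proof matches the paper's: both simply cite Voevodsky's and D\'eglise's identifications of the hearts. (The paper points to Proposition 3.1.12 of \cite{vo}, not 3.1.13, and to Corollary 5.2 and Theorem 5.11 of \cite{degmod}.)

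For assertion (2) the paper merely cites \S5 of \cite{degmod} and Lemma 2.2.4 of \cite{bokum}, while you propose two arguments. The first --- transferring \textbf{(A2)} from $SH^{S^{1}}(k)$ to $DM^{eff}(k)$ along the connecting functor $\gamma$ --- has a gap as written. Writing $u$ for the right adjoint of $\gamma$, adjunction turns the function-field condition for $C$ in $DM^{eff}$ into the same condition for $u(C)$ in $SH^{S^{1}}$; applying \textbf{(A2)} for $SH^{S^{1}}$ then reduces \textbf{(A2)} for $DM^{eff}$ to the equivalence ``$C \in DM^{eff}_{t_{hom}\geq 0}$ if and only if $u(C) \in SH^{S^{1}}_{t_{hom}\geq 0}$''. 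The non-formal implication here is the right $t_{hom}$-exactness of $u$, equivalently the left $t_{hom}$-exactness of $\gamma$: this is a genuine connectivity statement resting on Voevodsky's strict homotopy invariance theorem for sheaves with transfers (cf.\ Lemma 6.2(2) of \cite{by}), and it is not the ``filtered colimit computation'' you invoke. Note also that in the paper the $t_{hom}$-exactness of the forgetful functor appears only in Remark \ref{oprnci}, where it is \emph{deduced from} \textbf{(A2)}, so arguing in your direction without naming the extra ingredient risks circularity. Your second (``alternatively'') variant --- combining assertion (1) with the vanishing criterion for objects of $HI(k)$ in terms of function-field stalks --- is the sound route and is essentially what the cited references prove; to complete it you should additionally justify that $\mathfrak{D}^{eff}(\mathcal{M}(K)\{j\}, C[i])$ computes the stalk at $K$ of the homology sheaf $H_{-i}^{t_{hom}}(C)$, which is the Hoyois-type comparison invoked in the proof of Proposition \ref{wcatheq}.
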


\begin{proof}
1. See \cite[Proposition 3.1.12]{vo} and \cite[Corollary 5.2 and Theorem 5.11]{degmod}.\\
2. This is well known; see \S5 of \cite{degmod} (cf. also Lemma 2.2.4 of \cite{bokum}).
\end{proof}

\fbox{The case of $\mgl-\modd(k)$}

\begin{pr}\label{thonmod}
\begin{enumerate}
\item There exists an equivalence $\mathcal{O}|_{\hrt}: \hrt^{\mgl} \cong HI_{*}^{tr}(k)$.
\item \textbf{(A2)} is fulfilled.

\end{enumerate}
\end{pr}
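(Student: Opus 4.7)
The plan is to reduce both claims to known results by exploiting the comparison between $\mgl-\modd(k)$ and the other motivic categories of \S\ref{vgabc}. Recall that there is a canonical exact monoidal adjunction $SH(k) \rightleftarrows \mgl-\modd(k)$ given by smashing with the unit $\mgl$ and forgetting the module structure; moreover, since $\mgl$ is an oriented ring spectrum, one has a canonical comparison functor $\mgl-\modd(k) \to DM(k)$ (see \cite{cidtm}, \S7.2, and Example 1.3.1(3) of \cite{bondegl}) that is compatible with the generators $\mathcal{M}(X)\{i\}$ and Tate twists. These functors respect coproducts, send compact generators to compact generators, and are compatible with the spectral sequences of Axiom \textbf{(A1)} inherited (as discussed at the start of \S\ref{sothermot}) from $SH^{S^{1}}(k)$.

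For part (1), I would use the fact that the homotopy $t$-structure on $\mgl-\modd(k)$ is generated by the images of the generators of $SH(k)$ (in the sense of Definition \ref{tgdasj}), so that the forgetful functor to $SH(k)$ is $t$-exact and its left adjoint $E \mapsto \mgl\wedge E$ is right $t$-exact. Passing to hearts, an object of $\hrt^{\mgl}$ is canonically a strictly $\mathbb{A}^{1}$-invariant Nisnevich sheaf of graded abelian groups equipped with the extra structure coming from the oriented $\mgl$-module action, and this extra structure coincides with Voevodsky transfers — this is precisely the content of the identification documented in Proposition \ref{thpragg}(1) combined with the module-theoretic comparison theorem of \cite{bondegl} (and related work of Hoyois and Spitzweck). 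The functor $\mathcal{O}|_{\hrt}$ realizing this equivalence sends $E\in \hrt^{\mgl}$ to the homotopy module with transfers $(\underline{\pi}_{0}(E)_{n})_{n\in \mathbb{Z}}$.

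For part (2), I would transport the validity of \textbf{(A2)} from $SH(k)$ by the same formal trick used in Lemma \ref{mlfjooa}(2). Explicitly: if $C \in \mgl-\modd(k)$ satisfies $\mgl-\modd(\mathcal{M}(K)\{j\}, C[i])=0$ for all function fields $K/k$, all $j \in \z$, and all $i>0$, then by the coniveau spectral sequence from \textbf{(A1)} (which holds in $\mgl-\modd(k)$ by transport from $SH^{S^{1}}(k)$) together with compact generation (Remark \ref{cgmus}(1)) we deduce that $C$ lies in $\mgl-\modd(k)_{t_{hom}\ge 0}$; the converse inclusion is immediate from the definition of $t_{hom}$ as the $t$-structure generated by $\mathcal{M}(X)\{j\}$. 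The argument is formally identical to Proposition \ref{thshc}(1,2), with the role of stable homotopy sheaves played by the graded sheaves $C^{*}_{*}$.

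The main obstacle is likely to be assertion (1): verifying that the transfer structure one obtains on $\underline{\pi}_{0}(E)_{*}$ from the $\mgl$-module structure coincides with the Voevodsky transfers on $HI_{*}^{tr}(k)$ is the only non-formal step, and it relies on the nontrivial identification of the heart of $\mgl-\modd(k)$ with homotopy modules with transfers — a result that requires the comparison theorem between $\mgl$-modules and $DM$ rather than a direct computation. Everything else in the proposition follows by the general formal yoga already deployed in the previous three special cases of this section.
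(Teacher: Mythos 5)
Your treatment of part (1) is essentially in the spirit of the paper, which simply cites Remark 4.3.3 of \cite{bondegl}: you correctly identify that the equivalence $\hrt^{\mgl}\cong HI_*^{tr}(k)$ rests on the comparison between $\mgl$-modules and the $DM$-side via the oriented module structure, and that the non-formal input is the matching of the induced ``transfer'' structure with Voevodsky transfers. This is consistent with what the cited reference establishes.

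For part (2), however, your argument diverges from the paper's and has a real gap. The paper proves \textbf{(A2)} for $\mgl-\modd(k)$ purely by transport: the forgetful functor $\mathcal{O}:\mgl-\modd(k)\to SH(k)$ (right adjoint to the exact connecting functor $\mathcal{L}$ with $\mathcal{L}(\mathcal{M}_{SH}(X)\{i\}[n])=\mgl(X)\{i\}[n]$) is $t_{hom}$-exact and conservative, and the adjunction identifies stalks $\mgl-\modd(\mathcal{M}(K)\{j\},C[i])$ with $SH(\mathcal{M}_{SH}(K)\{j\},\mathcal{O}(C)[i])$. Hence both sides of \textbf{(A2)} for $\mgl$-modules are detected after applying $\mathcal{O}$, and the statement reduces exactly to the already-established $SH(k)$ case. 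You instead attempt to rerun the coniveau-spectral-sequence argument directly in $\mgl-\modd(k)$. Two problems. First, you assert that the inclusion $\mgl-\modd(k)_{t_{hom}\ge 0}\subset\{C:\text{stalks vanish for }i>0\}$ is ``immediate from the definition of $t_{hom}$''. It is not: this is exactly the connectivity statement that the generators $\mathcal{M}(X)\{j\}$ themselves have vanishing positive-degree stalks, which for $SH(k)$ is Morel's stable connectivity theorem (and in the paper its $SH$-version is imported via Lemma \ref{mlfjooa}(2), which is itself a citation of \cite{bgn}, not a ``formal trick''). For $\mgl$-modules you would need to re-derive it, which is precisely what the paper's transport via $\mathcal{O}$ circumvents. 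Second, the forward implication you invoke — from vanishing stalks to membership in $\mgl-\modd(k)_{t_{hom}\ge 0}$ — does not follow from the coniveau spectral sequence alone: the spectral sequence controls $E_1^{p,q}$ with $q>0$ via the hypothesis, but the terms with $p>0$ and $-p<q\le 0$ are not killed by the assumed vanishing, and one needs additional input (in the paper's route, again, the conservativity of $\mathcal{O}$ together with the $SH(k)$ result) to conclude. So while your overall reduction strategy (use the known cases) is sound, the specific mechanism you propose for \textbf{(A2)} is not a complete proof, and the paper's choice to transport along $\mathcal{O}$ is both shorter and avoids having to reprove connectivity in the $\mgl$-module setting.
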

\begin{proof}
1. See Remark 4.3.3 of \cite{bondegl}.\\
2. Easy from the corresponding result for $SH(k)$, $t_{hom}-$exactness and conservativity of the ''forgetful'' functor $\mathcal{O}$, which is right adjoint to the natural connecting functor $\mathcal{L}: SH(k) \to \mgl-\modd(k)$ (for which
 $\mathcal{L}(\mathcal{M}_{SH}(X)\{i\}[n])= \mgl(X)\{i\}[n]$);
see Example 2.3.3 of \cite{bondegl} and \S2.2.5 of \cite{degormod}.

\end{proof}

\begin{rema}\label{oprnci}
Recall also that for each of the categories $\mathfrak{D}^{eff}(k)$ there is a ''forgetful'' $t^{eff}_{hom}-$exact functor $\Psi_{\mathfrak{D}}$ to $SH^{S^{1}}(k)$, which is right adjoint to the natural functor $SH^{S^{1}}(k)\to \mathfrak{D}^{eff}$
(easily follows from our description of $t^{eff}_{hom} $ in terms of homotopy sheaves, see also Lemma 6.2 (2) of \cite{by}).

Let $E\in \underline{Ht}^{\mathfrak{D}^{eff}}_{hom}$; then $E^{n}_{0}(X) \cong H_{Nis}^{n}(X, \pi^{\mathbb{A}^{1}}_{0}(\Psi_{\mathfrak{D}}(E)))$ for all $X \in \sv$, $n \in \mathbb{Z}$. Indeed, in the case $\mathfrak{D}^{eff}= SH^{S^{1}}(k)$ this statement
 is well known and is given by Proposition 1.4.6(6) of \cite{bgn}. The general case 
  follows  from this one via the adjunction isomorphism $E^{n}_{0}(X) \cong \Psi_{\mathfrak{D}}(E)^n_0(X)$; cf. Proposition \ref{thshc}(5).
\end{rema}

\subsection{On localizations of coefficients in motivic categories}\label{olctc}

 Now we recall some 
  basics on localizations of  coefficient rings (actually, we  "start from" the case where the coefficient ring is just $\z$)   in  compactly generated triangulated categories. 

Below $S \subset \z$ will be a set of prime numbers; denote the ring $\mathbb{Z}[S^{-1}]$ by $R$.

\begin{pr}\label{cofloc}

Assume that $\cu$ is compactly generated by a small subcategory $\cu'$. Denote by $\cu_{S-tors}$ the localizing subcategory of $\cu$ (compactly) generated by $\co(c' \xrightarrow{\times s} c')$ for $c' \in \obj\cu'$, $s \in S$.
Then the following statements are valid.

\begin{enumerate}
\item $\cu_{S-tors}$ 
 contains the cones of $c \xrightarrow{\times s} c$ for all $c \in \obj\cu$, $s \in S$.

\item The Verdier quotient $\cu_{R}=\cu/\cu_{S-tors}$ exists (i.e., morphisms form sets); the localization functor $l: \cu \to \cu_{R}$ respects coproducts and converts compact objects into compact ones.
Moreover,  $\cu_{R}$ is generated by $l(\obj\cu')$ as a localizing subcategory.

\item For any $c \in \obj\cu$, $c' \in \obj\cu'$, we have $\cu_{R}(l(c'), l(c)) \cong \cu(c', c)\otimes_{\mathbb{Z}} R$.

\item $l$ possesses a right adjoint $G$ that is a full embedding functor. The essential image of $G$ consists of those $M \in \obj\cu$, such that $s Id_{M}$ is an automorphism for any $s \in S$.

\item Assume $\cha(k)=p$, and $p \in S$ if $p>0$. Then the category $\mathfrak{D}_{R}^{c}(k)$ is rigid. Moreover, $\mathfrak{D}_{R}^{c}(k)$ is the smallest thick subcategory of $\mathfrak{D}_{R}(k)$ containing
all $\mathcal{M}_{R}(P)\{i\}$ for $P \in \spv$, $i \in \mathbb{Z}$. Thus, the set $\mathcal{M}_{R}(\spv)$ compactly generates $\mathfrak{D}_{R}(k)$.

Surely, the corresponding statements are fulfilled for the effective case.

\end{enumerate}
\end{pr}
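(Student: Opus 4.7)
The plan is to treat the five assertions in order, since (1)--(4) are essentially formal consequences of the fact that $\cu_{S-tors}$ is compactly generated, while (5) rests on two deep external theorems.

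For (1), I would run a devissage argument. Let $\mathcal{E} \subset \obj\cu$ be the class of those $c$ such that $\co(c \xrightarrow{\times s} c) \in \cu_{S-tors}$ for every $s \in S$. The class $\mathcal{E}$ is closed under shifts, extensions, and small coproducts: the first is clear, the second follows by applying the octahedral axiom to $\times s$ on a distinguished triangle, and the third from the fact that cones commute with coproducts together with $\cu_{S-tors}$ being localizing. Since $\cu' \subset \mathcal{E}$ by definition of $\cu_{S-tors}$ and $\cu$ is compactly generated by $\cu'$, this forces $\mathcal{E} = \obj\cu$.

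For (2), I would invoke Neeman's localization theorem. The defining generators of $\cu_{S-tors}$ are cones of maps between compact objects of $\cu$, hence compact in $\cu$, so $\cu_{S-tors}$ is compactly generated. Neeman's theorem then yields that the Verdier quotient $\cu_R = \cu/\cu_{S-tors}$ has small Hom sets, that $l$ admits a right adjoint and preserves both coproducts and compact objects, and that $l(\obj\cu')$ generates $\cu_R$ as a localizing subcategory. For (3), combining (1) with the Brown--Verdier calculus of fractions and compactness of $c'$, one obtains the standard colimit description $\cu_R(l(c'), l(c)) \cong \inli_{S} \cu(c', c)$, where the filtered colimit runs over successive multiplications by elements of $S$; this identifies with $\cu(c', c) \otimes_\z R$ by definition of $R = \z[S^{-1}]$. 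For (4), Brown representability applied to $\cu_R$ yields the right adjoint $G$, which is fully faithful because $l \circ G \cong \id$ for any Bousfield localization. Its essential image equals $\cu_{S-tors}^{\perp}$; by (1) together with compactness of generators, passing to the long exact sequence of $\co(c' \xrightarrow{\times s} c')$ shows that $M \in \cu_{S-tors}^{\perp}$ iff $\cu(c', M) \xrightarrow{\times s} \cu(c', M)$ is bijective for every $c' \in \obj\cu'$ and $s \in S$, which is equivalent to $s \cdot \id_M$ being an automorphism of $M$.

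The main obstacle is (5), which requires two external inputs from motivic homotopy theory. First, under the assumption that $p$ is invertible in $R$, a theorem of Cisinski--Déglise (building on de Jong / Gabber alterations) shows that the compact subcategory $\mathfrak{D}^c(k)$ is generated as a thick subcategory by $\mathcal{M}(P)\{i\}$ for $P \in \spv$, $i \in \z$; second, that $\mathfrak{D}^c(k)$ is rigid, which follows from Poincar\'e duality for smooth projective varieties with $p$ inverted. Both properties pass to $\mathfrak{D}_R^c(k)$: the generation statement is immediate from (2) because $l$ preserves compactness and sends the chosen generators to generators, while rigidity is preserved because $l$ is a symmetric monoidal localization and such functors send dualizable objects to dualizable ones. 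Compact generation of $\mathfrak{D}_R(k)$ by $\mathcal{M}_R(\spv)\{i\}$ then follows from (2) once more. The argument for the effective variant is parallel, using the analogous known generation and rigidity statements for $\mathfrak{D}^{eff, c}(k)$.
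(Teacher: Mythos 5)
The paper's proof of this proposition consists entirely of citations: assertions (1)--(4) are referred to [Bon20a, Proposition 1.2.5], and (5) to [Bon20a, Proposition 2.2.3(9)] (plus [BoD17] and [Bon11]). Your proposal actually proves the statement, which is a genuinely different (more self-contained) route: you reconstruct the argument behind the cited results using the standard toolkit (devissage, Neeman--Thomason localization, Brown representability). The substance is correct. A few points where the write-up is compressed or slightly imprecise and would need tightening: in (3), the jump from the Verdier roof description to ``$\inli_S \cu(c',c)$ over successive multiplications'' is not immediate; the cleanest route is to form the telescope $T(c)=\operatorname{hocolim}(c\xrightarrow{\times s_1}c\xrightarrow{\times s_2}\cdots)$, observe via (1) that $\co(c\to T(c))\in\cu_{S-tors}$ and via the characterization in (4) that $T(c)\in\cu_{S-tors}^\perp$, so that $T(c)$ computes the Bousfield localization, and then use compactness of $c'$ to get $\cu(c',T(c))\cong\cu(c',c)\otimes_\z R$. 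In (5), you write $\mathfrak{D}^c(k)$ when you mean the $\z[1/p]$-linear or $R$-linear category, and the claim that $l$ is a monoidal localization silently uses that $\cu_{S-tors}$ is a tensor ideal --- this does follow from (1) (the class of $x$ with $x\otimes\cu\subset\cu_{S-tors}$ is localizing and contains the generators), but it should be said. Finally, the paper attributes the input to (5) to Bondarko's [Bon20a] rather than directly to Cisinski--D\'eglise, though the underlying de Jong/Gabber alteration machinery is indeed the key point. Overall: the paper buys brevity by citing; your version buys transparency and makes the dependence on standard triangulated-category facts explicit, at the cost of a couple of steps that need to be spelled out.
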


\begin{proof}
1, 2, 3, 4. See Proposition 1.2.5 of \cite{binfeff}. 

5. See \cite[Proposition 2.2.3(9)]{binfeff} (cf. also Corollary 2.4.8 of \cite{bondegl} and \cite[Lemma 2.3.1]{bzp}).
\end{proof}

\begin{rema}
\begin{enumerate}
\item Since $\chow_{R}$ is Karoubian and connective in $DM_{R}$ (i.e., $\chow_{R} \perp \chow_{R}[i]$ for all $i>0$; see Remark 3.1.2 of \cite{bokum}), Corollary 2.1.2 of \cite{bonspkar} gives a unique weight structure $w_{\chow,gm}$ on the smallest full 
retraction-closed triangulated subcategory of $DM_{R}$ containing $\chow_{R}$ such that $\hw_{\chow,gm}=\chow_{R}$.

An important observation is that if 
 $\cha(k)=0$ or $\cha(k)=p \in S$, then $\wchose$ and $\wchosw$ is also generated by the motives of {\bf smooth projective} varieties (see Theorem 2.1.2(2) of \cite{bokum} and the proposition above). Moreover, 
  in these cases  
  $DM_{R}(k)$ and $\mgl-\modd_{R}(k)$ differ from other motivic categories in that we can describe the hearts of $w_{Smooth}$ ''geometrically''; these are just the corresponding  ''big'' categories of Chow motives.  Respectively, in this case Proposition \ref{spobc}(2) gives a quite explicit calculation of unramified cohomology.  Moreover, in these case 
$w^{eff}_{Smooth}$ and $w_{Smooth}$ restrict to the corresponding subcategories  $DM^{eff}_{gm, R}(k)\subset  DM_{gm, R}(k)$ of compact objects (which  equal the 
 smallest strict triangulated subcategories of $DM^{eff}_{R}(k)\subset DM_{R}(k)$  that contain the corresponding Chow motives), 
  and $Hw_{\chow,gm}$ is equivalent to the corresponding additive category of Chow motives $\chow_{R}$.
These types of  Chow weight structures originate from \S6.5 of \cite{bws} and \cite{bzp} (cf. also Remark 3.1.4 and Proposition 3.2.6 of \cite{bwcomp}).

Note also that one of the obstructions for inclusion $\chow_{R} \subset Hw_{\chow}$ is the non-triviality of the Hopf map $\eta$ (see \S6.2 of \cite{Mor}). 
  It is closely related to orientability, cf. \cite{bchow}.

\item 
 Take  $S=\{2\}$, and recall the decomposition $SH(k)[1/2] = SH(k)^{+} \times SH(k)^{-}$ (induced by the symmetry involution of $\mathbb{P}^{1} \wedge \mathbb{P}^{1}$; see \S6 of \cite{lev}). 
 It easily follows that we can also add $SH(k)^{+}$ to the examples listed in \S\ref{sothermot}.
\item Summarizing the above, we obtain that all the results of the previous sections can be applied directly to 
 every  motivic category mentioned in \S\ref{vgabc}. 
\end{enumerate}
\end{rema}

\subsection{Supplementary remarks and questions}\label{opcacon} 

Possibly the matters mentioned below will be studied in consequent papers.

\begin{rema}
\begin{enumerate}
\item It would be interesting to generalize our weight and $t$-structure definitions and results to  relative motives over general base schemes. We also plan to study the weight-exactness for various  functors between  motivic categories.

\item In  \cite{bchow} a $t$-structure generated by objects of the form $Th_{X}(\xi)$, where $X \in SmProj/k$ and $\xi \in K(X)$ was considered. Probably, this $t-$structure is dual (in some sense) to our $t_{\smo}^{eff}$.

\item In the papers \cite{bsoscwh} and  \cite{bonkum} 
 certain Chow-weight homology functors  from the categories $DM^{eff}_{gm}(k)$ and $DM^{eff}(k)$ were studied in detail. Using the results of this article (especially \S\ref{etfiloth}), 
one can generalize this theory to any motivic category. The most interesting case is $SH(S)$ (for a ''reasonable'' base scheme $S$).

\item For which objects $S \in \underline{Ht}^{eff}_{hom}$ is our filtration finite (i.e., there exists $N$ with $F^{N}(S)=S$)? Is it exhaustive? 
\end{enumerate}

\end{rema}

\end{document}